\newcommand{\lyxmathsym}[1]{\ifmmode\begingroup\def\b@ld{bold}
  \text{\ifx\math@version\b@ld\bfseries\fi#1}\endgroup\else#1\fi}
\newenvironment{lyxlist}[1]
	{\begin{list}{}
		{\settowidth{\labelwidth}{#1}
		 \setlength{\leftmargin}{\labelwidth}
		 \addtolength{\leftmargin}{\labelsep}
		 }}
	{\end{list}}
\theoremstyle{remark}
\newtheorem*{notation*}{\protect\notationname}
\theoremstyle{plain}
\newtheorem{thm}{\protect\theoremname}[section]
\theoremstyle{definition}
\newtheorem{defn}[thm]{\protect\definitionname}
\theoremstyle{remark}
\newtheorem{rem}[thm]{\protect\remarkname}
\theoremstyle{plain}
\newtheorem{lem}[thm]{\protect\lemmaname}
\theoremstyle{definition}
\newtheorem{example}[thm]{\protect\examplename}
\theoremstyle{plain}
\newtheorem{prop}[thm]{\protect\propositionname}
\theoremstyle{plain}
\newtheorem{cor}[thm]{\protect\corollaryname}
\theoremstyle{plain}
\newtheorem{question}[thm]{\protect\questionname}
\numberwithin{equation}{section}
\theoremstyle{plain}
\theoremstyle{definition}
\DeclareMathOperator{\At}{At}
\DeclareMathOperator{\Trim}{Trim}
\providecommand{\corollaryname}{Corollary}
\providecommand{\definitionname}{Definition}
\providecommand{\examplename}{Example}
\providecommand{\lemmaname}{Lemma}
\providecommand{\notationname}{Notation}
\providecommand{\propositionname}{Proposition}
\providecommand{\questionname}{Question}
\providecommand{\remarkname}{Remark}
\providecommand{\theoremname}{Theorem}
\begin{document}
\title{Stone space partitions indexed by a poset}
\author[A. B. Apps]{Andrew B. Apps}

\address{Independent researcher, St Albans, UK}
\email{andrew.apps@apps27.co.uk}
\thanks{I gratefully acknowledge the use of Cambridge University's library facilities.}
\subjclass{06E15, 06A06}
\keywords{Stone space, partition, poset, PO system, primitive Boolean ring, completion}
\begin{abstract}
Stone space partitions $\{X_{p}\mid p\in P\}$ satisfying conditions
like $\overline{X_{p}}=\bigcup_{q\leqslant p}X_{q}$ for all $p\in P$,
where $P$ is a poset or PO system (poset with a distinguished subset),
arise naturally in the study both of primitive Boolean algebras and
of $\omega$-categorical structures. A key concept for studying such
partitions is that of a $p$-trim open set which meets precisely those
$X_{q}$ for which $q\geqslant p$; for Stone spaces, this is the
topological equivalent of a pseudo-indecomposable set. This paper
develops the theory of infinite partitions of Stone spaces indexed
by a poset or PO system where the trim sets form a neighbourhood base
for the topology. We study the interplay between order properties
of the poset/PO system and topological properties of the partition,
examine extensions and completions of such partitions, and derive
necessary and sufficient conditions on the poset/PO system for the
existence of the various types of partition studied. We also identify
circumstances in which a second countable Stone space with a trim
partition indexed by a given PO system is unique up to homeomorphism,
subject to choices on the isolated point structure and boundedness
of the partition elements. One corollary of our results is that there
is a partition $\{X_{r}\mid r\in[0,1]\}$ of the Cantor set such that
$\overline{X_{r}}=\bigcup_{s\leqslant r}X_{s}\text{ for all }r\in[0,1]$. 
\end{abstract}
\maketitle
\section{Introduction}

For a set $P$ with a transitive relation $\prec$, Ziegler~\cite[Part~II,~\S1.C,D]{FlumZiegler}
introduced the idea of a \emph{good partition }$\{X_{p}\mid p\in P\}$
of a $T_{3}$-topological space, where each $X_{p}$ is either discrete
or has no isolated points, and: 

\begin{equation}
\text{\ensuremath{\overline{X_{p}}=\bigcup_{q\preccurlyeq p}X_{q}\text{ for all }p\in P}}\label{eq:goodpartn}
\end{equation}

\emph{Finitely based }partitions of a Stone space $X$, defined as
partitions which can be refined to a good partition $\{X_{p}\mid p\in P\}$
of $X$ for a finite poset $P$ such that each $X_{p}$ has only finitely
many isolated points, arise naturally in the study of $\omega$-categorical
structures. We provide the relevant $\omega$-categorical background
at the end of this introduction; whilst this has provided the motivation
and some of the concepts for the current study, it is not necessary
for an understanding of the rest of this paper.

A \emph{PO system }is a poset $P$ with a distinguished subset $P_{1}$;
equivalently, $P$ has a transitive anti-symmetric relation $<$ with
$P_{1}=\{p\in P\mid p<p\}$. A~finitely based partition of a Stone
space $X$ can be further refined to yield a ``$P$-partition''
$\{X_{p}\mid p\in P\}$, where $P$ is a finite PO system with $P^{d}\subseteq P_{\min}$
(writing $P^{d}=P-P_{1}$ and $A^{\prime}$ for the derived set of
$A$), such that:

\begin{equation}
\begin{array}{c}
X_{p}^{\prime}=\bigcup_{q<p}X_{q}\text{ for all }p\in P\end{array}\label{eq:finbase}
\end{equation}

Apps~\cite[Theorem~C,~taking~the~group~actions~to~be~trivial]{AppsBP}
showed that for any such $P$, there is a compact Stone space $X$
that admits a partition $\mathscr{X}=\{X_{p}\mid p\in P\}$ satisfying~(\ref{eq:finbase})
and such that $|X_{p}|=1\text{ for }p\in P^{d}$, and that $\{X,\mathscr{X}\}$
is unique up to homeomorphisms respecting the partition. 

This result is also a corollary of the work of Pierce~\cite[Theorems 4.3 and 4.6]{Pierce}.
Pierce studied compact Stone spaces $X$ of ``finite type'', which
have only finitely many subsets invariant under homeomorphisms of
$X$. These subsets form a finite ``topological Boolean algebra''
$\mathscr{A}$, whose atoms can be labelled by a finite PO system
$P$ so as to form a complete $P$-partition of $X$ satisfying (\ref{eq:finbase}),
with $|X_{p}|$ taking values in $\mathbb{N}_{+}$ for $p\in P^{d}$.
Moreover, any finite topological Boolean algebra generated by its
closed elements can be embedded in a Stone space such that its atoms
form a complete $P$-partition of the space.

Stone spaces of finite type are a special type of primitive space.
An element $A$ of a Boolean ring $R$ is \emph{pseudo-indecomposable
(p.i.)} if for all $B\in(A)$, either $(B)\cong(A)$ or $(A-B)\cong(A)$
, where $(A)=\{B\in R\mid B\subseteq A\}$; and $R$ and its Stone
space are \emph{primitive }if every element of $R$ is the disjoint
union of finitely many p.i.\ elements. Hanf~\cite{Hanf} showed
that a primitive Boolean algebra $R$ such that $1_{R}$ is p.i.\ is
determined up to isomorphism by a PO system ``structure diagram''
$P$. Hansoul~\cite{Hansoul} further showed that with any countable
primitive $R$, we can associate a natural (and usually infinite)
complete partition of its Stone space $X$, indexed by the filters
on $P$, whose elements are invariant under homeomorphisms of $X$.

This paper develops the theory of infinite $P$-partitions satisfying
properties like~(\ref{eq:goodpartn}) if $P$ is a poset or~(\ref{eq:finbase})
if $P$ is a PO system, focussing on $P$ and the partition rather
than the underlying Stone space or Boolean ring. We consider conditions
on $P$ for the existence and uniqueness of such partitions, and the
interplay between order properties of $P$ and topological properties
of $P$-partitions. This provides a unifying theory to underpin both
the finitely based partitions arising for $\omega$-categorical structures
and the structure and rank diagrams associated with primitive Boolean
algebras. The main departures from existing work are firstly to study
$P$-partitions of a dense subset of a space rather than of the whole
space, as complete $P$-partitions of a Stone space do not exist for
a general $P$; and secondly to work with Boolean rings, whose Stone
spaces may not be compact, rather than Boolean algebras.

The key concept developed is that of a $p$\emph{-trim set }$A$ for
$A\in R$ (the Boolean ring underlying the Stone space), satisfying
the conditions $A\cap X_{q}\neq\emptyset$ iff $q\geqslant p$ and
(if $P$ is a PO system) $|A\cap X_{p}|=1$ if $p\in P^{d}$; this
provides a topological equivalent to the algebraic concept of a pseudo-indecomposable
set. We will primarily be interested in $P$-partitions $\{X_{p}\mid p\in P\}$
of a (dense) subset $X$ of a Stone space $W$ for which every element
of $W$ has a neighbourhood base of trim sets. This strengthens~(\ref{eq:goodpartn})~or~(\ref{eq:finbase})
by adding conditions on local properties of the partition, with elements
of $X_{p}$ dividing into \emph{clean points,} which have a $p$-trim
neighbourhood, and \emph{limit points}. We will consider \emph{trim
partitions}, where every element of $X_{p}$ is a clean point,\emph{
semi-trim partitions,} where clean points are dense in $X_{p}$ for
relevant $p$, and \emph{complete partitions, }where $X$ is the whole
Stone space $W$.

The paper is structured as follows. In Section~\ref{sec:Definitions-and-Basic}
we define and establish some basic properties of the different types
of partitions, with examples, linking properties of the partition
such as existence, compactness and isolated point structure with order
properties of the poset or PO system. Our initial definitions relate
to any topological space, but Sections~\ref{sec:Characterising-partitions-by}~to~\ref{sec:Uniqueness-Conditions}
focus on Stone spaces, particularly on those arising from a countable
Boolean ring, which we term \emph{$\omega$-Stone spaces}. 

In Section~\ref{sec:Characterising-partitions-by} we show that trim
sets are p.i., and that trim $P$-partitions of a Stone space are
effectively equivalent to structure diagrams as defined by Hanf~\cite{Hanf};
and hence that a countable Boolean ring is primitive iff its Stone
space admits a trim partition.

In Section~\ref{sec:Completion} we examine the criteria under which
a semi-trim $P$-partition can be extended to a $Q$-partition, where
$P,Q$ are posets or PO systems such that $P\subseteq Q$. We consider
the \emph{chain completion} and \emph{clean interior} of a semi-trim
partition and their inter-relationship, and show:

\paragraph{Completion (Corollary~\ref{Completion corollary}, weak version)}

Let $P$ be a countable poset and $W$ an $\omega$-Stone space. Then
any trim $P$-partition of $W$ can be extended to a complete semi-trim
$\overline{P}$-partition of $W$, where $\overline{P}$ is the chain
completion of $P$.

In Section~\ref{sec:Existence-Conditions} we show how to construct
trim $P$-partitions satisfying given isolated point and compactness
criteria, and obtain:

\paragraph{Complete trim partitions (Theorem~\ref{Trim nec condition}(i)) }

A trim $P$-partition of a Stone space is complete iff $P$ satisfies
the ascending chain condition.

\paragraph{Existence (Theorem~\ref{Existence theorem})}

Let $P$ be a countable poset or PO system. Then:
\begin{lyxlist}{00.00.0000}
\item [{(i)}] There is always an $\omega$-Stone space which admits a trim
$P$-partition;
\item [{(ii)}] There is an $\omega$-Stone space which admits a complete
semi-trim $P$-partition iff $P$ is $\omega$-complete (with an additional
``separation'' condition on $P^{d}$ if $P$ is a PO system).
\end{lyxlist}
In Section~\ref{sec:Uniqueness-Conditions} we consider when these
partitions are unique and obtain:

\paragraph{Uniqueness (Theorem~\ref{uniqueness thm}) }

Let $P$ be a countable PO system, and $L$ a suitable subset of $P$.
Then there is an $\omega$-Stone space $W$ which admits a trim $P$-partition
$\mathscr{X}=\{X_{p}\mid p\in P\}$, with $\{W,\mathscr{X}\}$ unique
up to homeomorphisms respecting $\mathscr{X}$, such that (i) $\overline{\bigcup_{p\in L}X_{p}}$
is compact; (ii) $\overline{X_{p}}$ is non-compact for all $p\notin L$;
(iii) $|X_{p}|$ takes predetermined values in $\mathbb{N}_{+}$ for
$p\in L_{\min}\cap P^{d}$.

A corollary of these results is that there is a partition $\{X_{r}\mid r\in[0,1]\}$
of the Cantor set (and therefore also of the unit interval) such that
$X_{0}$ is homeomorphic to the Cantor set and $\overline{X_{r}}=\bigcup_{s\leqslant r}X_{s}\text{ for all }r\in[0,1]$.

Finally in Section~\ref{sec:Closure-algebras-and} we classify partitions
associated with closure algebras generated by a single open set in
a topological space. We also show that there is an open subset $A$
of the Cantor set such that the closure algebra generated by $A$
mirrors the Rieger-Nishimura lattice, and such that each ``non-maximal''
atom within this closure algebra can independently be discrete or
have no isolated points.

We will use some of these results in a future paper to develop a new
type of Boolean construction, which in turn will yield some interesting
new examples of $\omega$-categorical structures.

\subsection{Background: $\omega$-categorical structures and $P$-partitions}

A countable structure $M$ for a language $L$ is \emph{$\omega$-categorical
}if $M$ is determined up to isomorphism within the class of countable
$L$-structures by its first order properties.

If $M$ is a structure and $R$ is a Boolean ring with Stone space
$X$, then the \emph{Boolean power of $M$ by $R$}, denoted $M^{R}$,
is the set of continuous functions from $X$ to $M$ with compact
support, where $M$ is given the discrete topology. If $M$ is an
$\omega$-categorical structure and $R$ is a countable Boolean ring
with only finitely many atoms, then $M^{R}$ is $\omega$-categorical~\cite{WWBP}.

More generally, if $M$ is a structure, $\{M_{i}\mid i\in I\}$ are
substructures of $M$, $X$ is the Stone space of a Boolean ring $R$,
and $\{C_{i}\mid i\in I\}$ are closed subsets of $X$, we can define
the \emph{filtered Boolean power}
\[
\Gamma=\{f\in M^{R}\mid\text{\ensuremath{C_{i}f\subseteq M_{i}\text{ for all \ensuremath{i\}}}}}.
\]

If $M$ and $I$ are finite and the system $\{X,\{C_{i}\mid i\in I\}\}$
is $\omega$-categorical, then $\Gamma$ is $\omega$-categorical~\cite{Schmerl}.

Macintyre and Rosenstein~\cite{MacRos} showed that an $\omega$-categorical
ring without nilpotent elements is isomorphic to a filtered Boolean
power
\begin{center}
$\{f\in E^{R}\mid\text{\ensuremath{C_{i}f\subseteq E_{i}\text{ for \ensuremath{i\leqslant n\}}}}}$,
\par\end{center}

where $E$ is a finite field; $E_{1},\ldots,E_{n}$ are subfields
of $E$; $C_{1},\ldots,C_{n}$ are closed subsets of the Stone space
$X$ of the Boolean ring $R$; and the system $\{X,\{C_{i}\mid i\leqslant n\}\}$
is $\omega$-categorical. Letting $J_{i}$ be the ideal of $R$ corresponding
to $C_{i}$ under the Stone correspondence, they further showed that
the system $(R,J_{1},\ldots,J_{n})$ is $\omega$-categorical iff
$\langle J_{1},\ldots,J_{n}\rangle$ is a finite sub-algebra of the
set of all ideals of $R$, viewed as a Heyting algebra, and $R/J$
has only finitely many atoms for each $J\in\langle J_{1},\ldots,J_{n}\rangle$.
Further studies of this condition may be found in~\cite{Alaev,Palchunov,Touraille}.
Letting $\mathscr{X}$ be the partition of $X$ consisting of the
atoms of the Boolean subring of $2^{X}$ generated by $\{C_{1},\ldots,C_{n}\}$,
it can be readily shown that this condition is equivalent to $\mathscr{X}$
being finitely based.

Apps~\cite{AppsBP} investigated when a finite extension $\Delta$
of $H^{R}$ is $\omega$-categorical, where $H$ is a finite perfect
indecomposable group and $R$ is a countable Boolean algebra with
finitely many atoms. $\Delta$ induces a finite subgroup $G$, say,
of $\mathrm{Aut}(R)$, the group of automorphisms of $R$, which also
acts on $X$, the Stone space of $R$. For each subgroup $K$ of $G$,
let $V(K)=\{x\in X\mid G_{x}=K\}$, where $G_{x}$ is the stabiliser
$\{g\in G\mid xg=x\}$ in $G$ of $x$. We can then associate with
$G$ the partition $\mathscr{V}$$(G)=\{V(K)\mid K\leqslant G,V(K)\neq\emptyset\}$.
Apps~\cite[Theorem~A~and~Corollary~B1]{AppsBP} showed that $\Delta$
is $\omega$-categorical iff $\mathscr{V\mathrm{(}\mathit{G}\mathrm{)}}$
is finitely based: in particular, the $\omega$-categoricity of $\Delta$
depends only on the topological properties of $\mathscr{V\mathrm{(}\mathit{G}\mathrm{)}}$. 

This situation also gives rise to a system $[P,G,\{L_{p}\mid p\in P\}]$,
where $P$ is a PO system acted on by $G$ and $L_{p}\leqslant G$
for each $p\in P$. Apps~\cite[Theorem~C]{AppsBP} further showed
that for any such system satisfying the relevant consistency conditions,
there is a unique Boolean representation $(X,\mathscr{X})$, where
$X$ is a compact $\omega$-Stone space and $\mathscr{X}=\{X_{p}\mid p\in P\}$
is a partition of $X$ satisfying (\ref{eq:finbase}) such that $|X_{p}|=1\text{ for }p\in P^{d}$,
together with an action of $G$ on $X$ such that (i) $X_{p}g=X_{pg}$
for $p\in P$ and $g\in G$; (ii) if $x\in X_{p}$, then $G_{x}=L_{p}$.

\subsection{Notation; poset definitions}
\begin{notation*}
If $A$ and $B$ are sets, we write $A\subseteq B$ to denote that
$A$ is contained in $B$, $A-B$ for $\{x\in A\mid x\notin B\}$,
$A\dotplus B$ for the disjoint union of $A$ and $B$, $|A|$ for
the cardinal number of $A$, $\mathbb{N}$ for $\{0,1,2,\ldots\}$,
and $\mathbb{N}_{+}$ for $\{1,2,3,\ldots\}$.
\end{notation*}
We will make frequent use throughout of the Stone correspondence between
Boolean rings (which may or may not have an identity) and locally
compact totally disconnected Hausdorff spaces, under which Boolean
ring elements correspond to compact open subsets of the Stone space,
and atoms of the Boolean ring correspond to isolated points of the
Stone space. The Boolean ring has an identity iff its Stone space
is compact, and is countable iff its Stone space is second countable
(i.e.\ the topology has a countable base of open sets). We will use
the term \emph{$\omega$-Stone space }to denote the Stone space of
a countable Boolean ring.
\begin{notation*}
If $\{X_{p}\mid p\in P\}$ is a partition of a subset $X$ of $W$,
so that each $X_{p}\neq\emptyset$, we will write $\{X_{p}\mid p\in P\}^{*}$
for the partition $\{\{X_{p}\mid p\in P\},W-X\}$ of $W$, with the
convention that $W-X$ may be the empty set. The asterisk will be
omitted where a complete partition of $W$ is intended (i.e.\ where
$X=W$). 

For $Q\subseteq P$, we write $X_{Q}$ for $\bigcup_{p\in Q}X_{p}$.

If $\{X_{p}\mid p\in P\}$ and $\{Y_{q}\mid q\in Q\}$ are partitions
of $W$, we say that $\{Y_{q}\}$ \emph{refines }$\{X_{p}\}$ if for
each $q\in Q$, we can find $p\in P$ such that $Y_{q}\subseteq X_{p}$.

If $X$ is a topological space and $S\subseteq X$, we write $S^{\prime}$
for the \emph{derived set }of $S$, namely the set of all limit points
of $S$, and $\overline{S}$ for the closure of $S$. 

If $R$ is a Boolean ring and $A\in R$, we write $(A)$ for $\{B\in R\mid B\subseteq A\}$,
the ideal of $R$ generated by $A$; $0$ for the zero element; and
$1_{R}$ for the multiplicative identity if $R$ is a Boolean algebra.
\end{notation*}
\begin{defn}
We recall that a \emph{PO system }is a set $P$ with an anti-symmetric
transitive relation $<$; equivalently, it is a poset with a distinguished
subset $P_{1}=\{p\in P\mid p<p\}$. We write $p\leqslant q$ to mean
$p<q$ or $p=q$; and $P^{d}$ for $\{p\in P\mid p\nless p\}$. If
$P$ is a poset, we write $p\lneqq q$ if $p\leqslant q$ and $p\neq q$.

Let $P$ be a partially ordered set or PO system and let $Q\subseteq P$.
We write $Q_{\downarrow}=\{p\in P\mid p\leqslant q\text{ for some }q\in Q\}$.

We recall that $Q$ is an \emph{upper} (respectively \emph{lower})
subset of $P$ if for all $q\in Q$, if $r\geqslant q$ (respectively
$r\leqslant q$) then $r\in Q$.

$P$ is \emph{$\omega$-complete }if every $\omega$-chain of elements
$p_{1}\leqslant p_{2}\leqslant p_{3}\leqslant\cdots$ has a supremum
(least upper bound) in $P$, and $P$ has the \emph{ascending chain
condition }(\emph{ACC}) if it has no strictly increasing chain of
elements $p_{1}\lneqq p_{2}\lneqq p_{3}\lneqq\cdots$.

$Q$ is an \emph{antichain }if for any $p\in Q$ and $q\in Q$ with
$p\neq q$, $p\nleqslant q$.

$P_{\min}$ and $P_{\max}$ will denote the sets of minimal and maximal
elements of $P$ respectively.

We will say that $F\subseteq P$ is a \emph{finite foundation} of
$Q$ if $F$ is a finite subset of $Q_{\downarrow}$ such that for
all $r\in Q_{\downarrow}$ we can find $p\in F$ such that $p\leqslant r$.

We write $P_{\Delta}$ for $\{p\in P\mid\{p\}\text{ has a finite foundation}\}$.

We will say that a subset $F$ of $Q$ is a \emph{finite ceiling }of
$Q$ if $F$ is finite and for all $q\in Q$, we can find $r\in F$
such that $q\leqslant r$.

We recall that $J\subseteq P$ is an \emph{ideal} of $P$ if $J$
is a lower subset such that for all $x,y\in J$, we can find $z\in J$
such that $x\leqslant z$ and $y\leqslant z$.
\end{defn}

\begin{rem}
\label{rem:Ideals}If $P$ is a countable poset, a routine argument
shows that the ideals of $P$ have form $J=\{p\in P\mid p\leqslant q_{n}\text{ for some }n\}$,
where $\{q_{n}\mid n\geqslant1\}$ is an increasing sequence in $P$
and $q_{1}$ can be any element of $J$; and that if $J$ has a supremum
then $\sup_{P}J=\sup_{P}\{q_{n}\mid n\geqslant1\}$ for any such $\{q_{n}\}$.
\end{rem}

\begin{lem}
\label{Finfound}If $F$ is a finite foundation for the subset $Q$
of the poset $P$, then $F\cap P_{\min}$ is also a finite foundation
for $Q$, so we may assume that $F\subseteq P_{\min}$. If $Q$ is
a lower set with a finite foundation, then we can take $F$ to be
$Q\cap P_{\min}$.
\end{lem}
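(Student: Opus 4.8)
The plan is to prove both assertions directly from the definition of a finite foundation, using the fact that a finite poset (hence any finite subset $F$ of $P$) satisfies the descending chain condition, so that every element of $F$ lies above some minimal element of $F$.

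First I would verify that $F\cap P_{\min}$ (where $P_{\min}$ here is read as the set of minimal elements of $F$ itself, equivalently $\{p\in F\mid p\text{ is minimal in }P\text{ among elements of }F\}$) is a finite foundation for $Q$. Finiteness is immediate. For condition~(i), given $q\in Q$ and $r\leqslant q$, apply the foundation property of $F$ to get $p\in F$ with $p\leqslant r$; since $F$ is finite it has the descending chain condition, so there is a minimal element $p'$ of $F$ with $p'\leqslant p$, and then $p'\in F\cap P_{\min}$ and $p'\leqslant p\leqslant r$, as required. Note that such a $p'$ is genuinely minimal in $P$, not merely in $F$: if $p''<p'$ in $P$, then taking $q$ with $p'\leqslant q$ (from condition~(ii) for $F$) and $r=p''$, the foundation property gives some $p_0\in F$ with $p_0\leqslant p''<p'$, contradicting minimality of $p'$ in $F$. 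For condition~(ii), each $p\in F\cap P_{\min}$ is in $F$, so already has some $q\in Q$ above it by the foundation property of $F$. Hence $F\cap P_{\min}$ is a finite foundation contained in $P_{\min}$.

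For the second assertion, suppose $Q$ is a lower set with some finite foundation $F$; by the first part we may assume $F\subseteq P_{\min}$. I claim $F\cap P_{\min}=F$ equals $Q\cap P_{\min}$, or more carefully that $Q\cap P_{\min}$ is a finite foundation for $Q$. First, $F\subseteq Q$: for $p\in F$ condition~(ii) gives $q\in Q$ with $p\leqslant q$, and since $Q$ is a lower set, $p\in Q$; combined with $F\subseteq P_{\min}$ this gives $F\subseteq Q\cap P_{\min}$, so $Q\cap P_{\min}$ is nonempty and, being sandwiched between $F$ and the finite-from-$F$'s-perspective structure, we still need its finiteness — which follows because condition~(i) of $F$ applied with $r=q$ shows every $q\in Q$ lies above some element of $F$, so every minimal element of $Q$ must itself lie in $F$ (if $q\in Q\cap P_{\min}$ then some $p\in F$ has $p\leqslant q$, forcing $p=q$ by minimality of $q$). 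Thus $Q\cap P_{\min}=F$, which is finite and is a finite foundation: condition~(i) for $Q\cap P_{\min}$ holds because given $q\in Q$, $r\leqslant q$, we have $r\in Q$ (lower set), so applying the foundation property of $F=Q\cap P_{\min}$ to $r$ yields $p\in Q\cap P_{\min}$ with $p\leqslant r$; condition~(ii) holds since each $p\in Q\cap P_{\min}$ satisfies $p\leqslant p\in Q$.

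The only mildly delicate point — and the one I would be most careful about — is the observation that a minimal element of the finite set $F$ is automatically minimal in all of $P$, which is where condition~(ii) of the foundation definition is essential; without it one could have $F$ containing an element that is minimal in $F$ but not in $P$. Everything else is a routine unwinding of the definitions together with the descending chain condition on finite sets, so no substantial obstacle is anticipated.
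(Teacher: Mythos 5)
Your proof is correct and follows essentially the same route as the paper: pick a minimal element of the finite set $F$ below the given $r$, show it is in fact minimal in $P$ (the paper gets the needed contradiction from condition~(i) applied to the ambient $q$, you from condition~(ii) plus (i) — the same substance), and for the lower-set case show $F\subseteq Q$ and $Q\cap P_{\min}\subseteq F$ so that $Q\cap P_{\min}$ coincides with (the minimal part of) $F$. Your initial re-reading of $F\cap P_{\min}$ as the minimal elements of $F$ is harmless, since your own argument shows those elements are minimal in $P$, so the set you work with is exactly $F\cap P_{\min}$ in the paper's sense.
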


\begin{proof}
If $q\in Q$ and $r\leqslant q$, find a minimal $p\in F$ with $p\leqslant r$
(as $F$ is finite); then $p$ is minimal in $P$, as otherwise we
could find $s\in F$ with $s\lneqq p$, so $p\in F\cap P_{\min}$.
Hence $F\cap P_{\min}$ is a finite foundation for $Q$. If $Q$ is
also a lower set then $F\subseteq Q$, and if $p\in Q\cap P_{\min}$
then we must have $p\in F$, so $F\cap P_{\min}=Q\cap P_{\min}$. 
\end{proof}

\section{\label{sec:Definitions-and-Basic}$P$-partitions: definitions, examples
and basic properties}

\subsection{Definitions}

The following definitions apply to a general topological space.
\begin{defn}
Let $W$ be a topological space, $(P,<)$ a poset or PO system, and
$\mathscr{X}=\{X_{p}\mid p\in P\}^{*}$ a partition of $W$: that
is, there is a subset $X$ of $W$ and a partition $\{X_{p}\mid p\in P\}$
of $X$ with each $X_{p}\neq\emptyset$. 

We will say that $\mathscr{X}$ is a \emph{$P$-partition }of $W$
if for all $p\in P$:

$\begin{cases}
\overline{X_{p}}\cap X=\bigcup_{q\leqslant p}X_{q} & \text{if }P\text{ is a poset;}\\
X_{p}^{\prime}\cap X=\bigcup_{q<p}X_{q} & \text{if }P\text{ is a PO system}.
\end{cases}$

For a subset $Y$ of $W$, we define its \emph{type }$T(Y)=\{p\in P\mid Y\cap X_{p}\neq\emptyset\}$. 

An open subset $A$ of $W$ is \emph{$p$-trim,} and we write $t(A)=p$,
if $T(A)=\{q\in P\mid q\geqslant p\}$, with additionally $|A\cap X_{p}|=1$
if $P$ is a PO system and $p\in P^{d}$. An open set is \emph{trim
}if it is $p$-trim for some $p\in P$. If $W$ is a Stone space,
we require trim sets to be compact and open (i.e.\ to be elements
of the associated Boolean ring). 

We write $\Trim(\mathscr{X})$ for the set of trim subsets of $W$,
$\Trim_{p}(\mathscr{X})$ for $\{A\in\Trim(\mathscr{X})\mid t(A)=p\}$
and $\widehat{P}$ for $\{t(A)\mid A\in\Trim(\mathscr{X})\}$. 

For $w\in W$, let $V_{w}=\{A\subseteq W\mid w\in A\wedge A\in\Trim(\mathscr{X})\}$
and let $I_{w}=\{t(A)\mid A\in V_{w}\}$, being the trim neighbourhoods
of $w$ and their types.

We say that $w\in W$ is a \emph{clean point }if $w\in X_{p}$ for
some $p$ and has a $p$-trim neighbourhood, and is otherwise a \emph{limit
point}.

A $P$-partition $\mathscr{X}$ of $W$ is a \emph{semi-trim }$P$-partition
if it also satisfies:
\begin{description}
\item [{ST1}] Every element of $W$ has a neighbourhood base of trim sets
(this implies that $X$ is dense in $W$);
\item [{ST2}] The partition is \emph{full}: that is, for each $p\in\widehat{P}$,
every element of $W$ with a neighbourhood base of $p$-trim sets
is an element of $X_{p}$;
\item [{ST3}] Clean points are dense in $X_{p}$ for $p\in\widehat{P}$. 
\end{description}
The semi-trim partition $\mathscr{X}$ is \emph{strongly semi-trim
}if all points in $X_{p}$ are clean for $p\in\widehat{P}$; is \emph{trim
}if all points in $X$ are clean (which requires $\widehat{P}=P$);
and is \emph{complete }if $X=W$.

If $\{Y_{p}\mid p\in P\}^{*}$ is a $P$-partition of the topological
space $Z$, we say that $\alpha:W\rightarrow Z$ is a \emph{$P$-homeomorphism}
if it is a homeomorphism such that $X_{p}\alpha=Y_{p}$ for all $p\in P$.
\end{defn}

\begin{rem}
$X_{p}$ contains clean points iff $p\in\widehat{P}$. For a semi-trim
partition, ST3 is equivalent to saying that if $A$ is open and $p\in T(A)\cap\widehat{P}$,
then $A$ contains a $p$-trim subset. 

If $P$ is a PO system and $\{X_{p}\mid p\in P\}^{*}$ is a $P$-partition
of $W$, then each $X_{p}$ is either discrete ($p\in P^{d}$) or
has no isolated points ($p\notin P^{d}$), and $\overline{X_{p}}\cap X=\bigcup_{q\leqslant p}X_{q}$
for all $p\in P$, as $\overline{X_{p}}=X_{p}\cup X_{p}^{\prime}$.

If $\{X_{p}\mid p\in P\}^{*}$ is a $P$-partition satisfying ST1
and ST3, then we can extend each $X_{p}$ for $p\in\widehat{P}$ by
adding to it all other elements with a neighbourhood base of $p$-trim
sets, to obtain a partition that also satisfies ST2, and it is easy
to see that this preserves the type of all the open sets. So condition
ST2 is a natural inclusion in the definition.

This section and Sections~\ref{sec:Completion}~and~\ref{sec:Closure-algebras-and}
concern $P$-partitions where $P$ may be either a PO system or a
poset. In Sections~\ref{sec:Characterising-partitions-by},~\ref{sec:Existence-Conditions}~and~\ref{sec:Uniqueness-Conditions}
however, $P$ will primarily be a PO system.
\end{rem}

\subsection{Examples}
\begin{example}
Let $X_{0}=C$ be a closed subset of the topological space $W$ such
that $X_{1}=W-C$ is dense in $W$, and let $P=\{0,1\}$. Then viewing
$P$ as a poset with $0\lneqq1$, $\mathscr{X}=\{X_{0},X_{1}\}$ is
a complete trim $P$-partition of $W$. An open subset of $W$ is
$0$-trim if it meets $C$, and otherwise is $1$-trim.

Suppose $W-C$ has no isolated points. If $C$ (as a space in its
own right) has no isolated points, then $\mathscr{X}$ is a $P$-partition
where $P$ is the PO system $\{0,1\}$ with $P^{d}=\emptyset$; while
if $C$ is discrete, then $\mathscr{X}$ is a $P$-partition where
$P$ is the PO system with $P^{d}=\{0\}$, with the $0$-trim sets
being the open sets containing just one element of $C$. If $C$ has
a mixture of isolated and non-isolated points, then $\mathscr{X}$
won't be a $P$-partition for any PO system $P$, but there may be
such a partition that refines $\mathscr{X}$.
\begin{example}
If $P=\{a,b\}$ with $a$ and $b$ unrelated, then trim $P$-partitions
of $W$ have the form $\{X_{a},X_{b}\}$ where $\{X_{a},X_{b}\}$
is a clopen partition of $W$. An open subset is $p$-trim ($p=a,b$)
iff it is a subset of $X_{p}$.
\begin{example}
More generally, if $C$ is any closed subset of a topological space
$W$, then we shall see in Section~\ref{sec:Closure-algebras-and}
that there is a complete trim partition of $W$ that refines the closure
subalgebra of $W$ generated by $C$.
\begin{example}
For any finite PO system $P$, Pierce~\cite{Pierce} showed how to
construct a compact Stone space $W$ with a complete $P$-partition,
and any such partition will necessarily be trim. The space $W$ is
of finite type, having only finitely many subsets invariant under
homeomorphisms of $W$.
\begin{example}
\label{exa:Primitive spaces}Spaces of finite type are a special case
of primitive Boolean algebras. Let $R$ be a countable primitive Boolean
algebra with Stone space~$W$. We shall see in Section~\ref{sec:Characterising-partitions-by}
that there is a canonical trim partition~$\mathscr{X}$ of~$W$
such that the trim elements of~$R$ are precisely its p.i.\ elements,
and that a countable Boolean ring is primitive iff its Stone space
admits a trim $P$-partition for some PO system $P$. However, this
partition will not in general be complete. 

For this canonical partition $\mathscr{X}$, our definition of clean
point corresponds to a \emph{homogeneous point }in Pierce~\cite[\S3.2]{PierceMonk},
trim sets correspond to \emph{uniform neighbourhoods}, and for $w\in W$,
our definition of $I_{w}$ corresponds to the \emph{characteristic
set }of $w$ defined in~\cite{Hansoul}.
\begin{example}
\label{Good example}However, not all $P$-partitions are even semi-trim.
Let $P$ be the poset $\{p_{1},p_{2},\ldots,q\}$, where each $p_{n}$
is a minimal element of $P$ and $p_{n}\lneqq q$ for all $n$. Let
$W$ be the (compact) Cantor set and let $E=\{x_{n}\mid n\geqslant1\}$
be an infinite discrete subset of $W$ (i.e.\ $E\cap E^{\prime}=\emptyset)$.
Let $X_{p_{n}}=\{x_{n}\}$ and $X_{q}=W-E$. Then $\{X_{p}\mid p\in P\}$
is a $P$-partition of $W$, as each $X_{p_{n}}$ is closed and $\overline{X_{q}}=W$.
If now $y$ is any accumulation point of $E$, which exist as $W$
is compact, then $y\in X_{q}$ but $y$ has no $q$-trim neighbourhoods.
Taking $Q=\{p_{n}\mid n\geqslant1\}$, we see that $Q$ is a lower
subset of $P$ but $X_{Q}$ is not closed.
\begin{example}
\label{exa:Ideal completion}Let $P=\mathbb{N}\cup\{\infty\}$, viewed
as a poset with the natural order. We shall see (Theorem~\ref{existence construction})
that we can find a Stone space $W$ with a trim $P$-partition $\mathscr{X}=\{X_{p}\mid p\in P\}^{*}$,
so that $X_{0}\subseteq\overline{X_{1}}\subseteq\overline{X_{2}}\subseteq\cdots\subseteq\overline{X_{\infty}}$,
with $X_{0}$ closed and $X_{\infty}$ open. Any compact open set
meeting $X_{m}$ will also meet $X_{n}$ for $m\leqslant n\leqslant\infty$,
and there will be such sets that miss $X_{n}$ for $n\lneqq m$ and
so are $m$-trim. Points in $W-X_{P}$ are limit points that have
$n$-trim neighbourhoods for each $n\in\mathbb{N}$ but are not elements
of $X_{\infty}$. There is no Stone space with a complete trim $P$-partition,
but there are two natural ways to extend $\mathscr{X}$ to a complete
semi-trim partition of $W$:
\end{example}

\end{example}

(a) expand $X_{\infty}$ to include all limit points. The expanded
partition will be a complete semi-trim $P$-partition but will not
be strongly semi-trim, as the expanded $X_{\infty}$ will contain
a mix of clean and limit points;

(b) let $Q=P\cup\{r\}$, where $n\lneqq r\lneqq\infty$ for all $n$,
let $Y_{p}=X_{p}$ for $p\in P$ and let $Y_{r}=W-X_{P}$ (i.e.\ all
the limit points). The resulting partition will be a complete strongly
semi-trim $Q$-partition of $W$, with $\widehat{Q}=P$, as all points
in $Y_{p}$ are clean for $p\in P$, but there are no $r$-trim sets
as every neighbourhood of a limit point meets $Y_{n}$ for some $n\in\mathbb{N}$.
\begin{example}
For a transitive relation $\prec$ on $P$, Ziegler~\cite{FlumZiegler}
defines a \emph{good partition }of a $T_{3}$-topological space $W$
to be a partition $\{X_{p}\mid p\in P\}$ such that $X_{p}^{\prime}=\bigcup_{q\prec p}X_{q}$
and $X_{q}\cap\{\bigcup_{q\nprec p}X_{p}\}^{\prime}=\emptyset$. If
$\prec$ is also antisymmetric, a good partition is precisely a complete
trim partition of the space.
\begin{example}
Myers~\cite[\S9]{Myers} defines a \emph{rank diagram }of a
Stone space and shows that an $\omega$-Stone space has a rank diagram
iff it is primitive. The rank diagram corresponds to our definition
of a complete strongly semi-trim partition of the underlying Stone
space.
\end{example}

\end{example}

\end{example}

\end{example}

\end{example}

\end{example}

\end{example}

\subsection{Basic properties of $P$-partitions}
\begin{prop}
\label{basic properties}($P$-partitions) Let $P$ be a poset or
PO system and let $\mathscr{X}=\{X_{p}\mid p\in P\}^{*}$ be a $P$-partition
of the topological space $W$.

(i) If $A$ is open in $W$, then $T(A)$ is an upper subset of $P$;

(ii) If $x\in X_{p}$ $(p\in P)$, then $x$ has a neighbourhood base
of $p$-trim sets iff $x$ has a $p$-trim neighbourhood (i.e.\ iff
$x$ is a clean point);

(iii) If $P$ satisfies the ACC and contains no infinite antichains
(e.g.\ if $P$ is finite), then a complete $P$-partition of $W$
is also a trim $P$-partition;

(iv) $X_{q}$ is infinite unless $q\in Q$, where $Q=P_{\min}$ if
$P$ is a poset and $Q=P_{\min}\cap P^{d}$ if $P$ is a PO system.
\end{prop}

\begin{proof}
(i) If $p\in T(A)$ and $q\geqslant p$, choose $x\in X_{p}\cap A$.
Then $x\in\overline{X_{q}}$, so $X_{q}\cap A\neq\emptyset$ and $q\in T(A).$

(ii) If $x\in X_{p}$ has a $p$-trim neighbourhood $A$, say, and
$B\subseteq A$ is an open neighbourhood of $x$, then $T(B)=T(A)$
by (a), as $p\in T(B)$, and if $|A\cap X_{p}|=1$ then $|B\cap X_{p}|=1$;
hence $B$ is a $p$-trim neighbourhood of $x$.

For (iii), let $\{X_{p}\mid p\in P\}$ be a complete $P$-partition
of $W$, and choose any $p\in P$. Let $Q=\{q\in P\mid q\ngeqslant p\}$.
Find a maximal $q_{1}\in Q$ and let $Q_{1}=\{q\in Q\mid q\leqslant q_{1}\}$,
using the ACC\@. If $Q_{1}\neq Q$, find a maximal $q_{2}\in Q-Q_{1}$
and let $Q_{2}=Q_{1}\cup\{q\in Q\mid q\leqslant q_{2}\}$. Repeating
this process, we will eventually find an antichain $\{q_{1},\ldots,q_{n}\}\subseteq Q$
such that for each $q\in Q$, we have $q\leqslant q_{j}$ for some
$j$, as $P$ contains no infinite antichains. Hence $\bigcup_{j\leqslant n}\overline{X_{q_{j}}}=\bigcup_{q\in Q}\overline{X_{q}}$. 

Now let $x$ be any element of $X_{p}$ and let $A$ be an open set
containing $x$, such that if $P$ is a PO system and $p\in P^{d}$
then $A\cap X_{p}=\{x\}$. By (ii) and completeness, it is enough
to find a $p$-trim neighbourhood of $x$. For each $j\leqslant n$
we have $x\notin\overline{X_{q_{j}}}$, so we can find an open set
$B_{j}$ with $x\in B_{j}\subseteq A$ and $B_{j}\cap\overline{X_{q_{j}}}=\emptyset$.
Then $B=\bigcap_{j\leqslant n}B_{j}$ is an open neighbourhood of
$x$ contained in $A$ which is disjoint from $\bigcup_{q\in Q}\overline{X_{q}}$,
with $B\cap X_{p}=\{x\}$ if $P$ is a PO system and $p\in P^{d}$.
So $B$ is a $p$-trim neighbourhood of $x$, as required.

(iv) is immediate, as if $q\notin Q$ then $X_{q}^{\prime}\supseteq X_{p}$
for some $p\in P$.
\end{proof}
The statements in the next Proposition will be used repeatedly and
have been labelled for ease of reference.
\begin{prop}
\label{basic properties-2 trim}(Semi-trim partitions) Let $P$ be
a poset or PO system and $\mathscr{X}=\{X_{p}\mid p\in P\}^{*}$ a
semi-trim $P$-partition of the topological space $W$.
\begin{description}
\item [{STP1\label{STP1}}] If $w\in W$, then $I_{w}$ is an ideal of
$\widehat{P}$ and:
\begin{description}
\item [{STP1A\label{STP1A}}] If $w\in X_{p}$ and $V\subseteq V_{w}$
is a neighbourhood base of $w$ then $p=\sup_{P}I_{w}=\sup_{P}\{t(A)\mid A\in V\}$;
\item [{STP1B\label{STP1B}}] $w$ is a clean point iff $I_{w}$ is a principal
ideal of $\widehat{P}$;
\end{description}
\item [{STP2\label{STP2}}] For all $q\in P$, we can find $p\in\widehat{P}$
such that $p\leqslant q$; and for all such $p$, there is an ideal
$J$ of $\widehat{P}$ such that $p\in J$ and $q=\sup_{P}J$;
\item [{STP3\label{STP3}}] If $P$ has the ACC, then $\mathscr{X}$ is
a complete trim $P$-partition of $W$;
\item [{STP4\label{STP4}}] If $w\in X_{q}$ is a limit point and $W$
is a $T_{1}$-space, then $w$ is not isolated in $X_{q}$;
\item [{STP5\label{STP5}}] If $P$ is a PO system and $W$ is a $T_{1}$-space,
then all points in $X_{P^{d}}$ are clean and $P^{d}\subseteq\widehat{P}$;
\item [{STP6\label{STP6}}] If $W$ is second-countable, then $\widehat{P}$
is countable.
\end{description}
\end{prop}

\begin{proof}
(STP1) Choose $w\in W$ and $A,B\in V_{w}$. As the partition is semi-trim,
we can find $C\in V_{w}$ such that $C\subseteq A\cap B$; then $t(C)\geqslant t(A)$
and $t(C)\geqslant t(B)$, and $t(C)\in I_{w}$. In addition, if $D\in\Trim(\mathscr{X})$
and $t(D)\lneqq t(A)$, then $A\cup D$ is $t(D)$-trim and $A\cup D\in V_{w}$,
so $t(D)\in I_{w}$. Hence $I_{w}$ is an ideal of $\widehat{P}$. 

If now $w\in X_{p}$, then $t(A)\leqslant p$ for all $A\in V$. If
$q\geqslant t(A)$ for all $A\in V$ then $X_{q}\cap A\neq\emptyset$;
hence $w\in\overline{X_{q}}$ and $q\geqslant p$, and so $p=\sup_{P}\{t(A)\mid A\in V\}$,
and taking $V=V_{w}$ we have $p=\sup_{P}I_{w}$. 

If further $w\in X_{p}$ is a clean point, then $p\in I_{w}$ and
$I_{w}=\{q\in\widehat{P}\mid q\leqslant p\}$. Conversely, if $w\in W$
and $I_{w}=\{q\in\widehat{P}\mid q\leqslant p\}$, choose $A\in V_{w}$
such that $t(A)=p$; then $t(B)=p$ for $B\in V_{w}$ and $B\subseteq A$,
so $w$ has a neighbourhood base of $p$-trim sets and $w\in X_{p}$
by fullness; hence $w$ is a clean point.

(STP2) If $q\in P$, choose $w\in X_{q}$ and $B\in V_{w}$; then
$t(B)\leqslant q$ and $t(B)\in\widehat{P}$; and if $p\in\widehat{P}$
with $p\leqslant q$, choose $B\in\Trim_{p}(\mathscr{X})$, $w\in B\cap X_{q}$,
let $J=I_{w}$ and use STP1A.

(STP3) Choose any $w\in W$. As $I_{w}\neq\emptyset$ and $P$ has
the ACC, we can find a maximal element $q$, say, of $I_{w}$. But
$I_{w}$ is an ideal, so $I_{w}=\{p\in\widehat{P}\mid p\leqslant q\}$.
Hence by~STP1B $w$ is a clean point in $X_{q}$, and so $\mathscr{X}$
is a complete trim $P$-partition of $W$.

(STP4) Suppose $w\in X_{q}$ is a limit point, let $A$ be any element
of $V_{w}$ and let $p=t(A)\lneqq q$. Let \textbf{$x\in A\cap X_{p}$
}be a clean point, and let $C$ be a $p$-trim neighbourhood of $x$
such that $w\notin C$, as $W$ is a $T_{1}$-space. Then $\text{\ensuremath{C\cap X_{q}\neq\emptyset}}$,
and so $w$ is not isolated in $X_{q}$.

(STP5) follows immediately from STP4. 

(STP6) For each $p\in\widehat{P}$, find a $p$-trim set $A_{p}$
and choose $x_{p}\in A_{p}\cap X_{p}$. We can find $C_{p}$ in the
countable base of open sets of $W$ such that $x_{p}\in C_{p}\subseteq A_{p}$,
and $C_{p}$ will be $p$-trim by STP2. Hence $\widehat{P}$ is countable.
\end{proof}
\begin{rem}
If $P$ is a poset and $\mathscr{X}$ a complete partition of the
topological space $W$, it can be shown that $\mathscr{X}$ is a trim
$P$-partition of $W$ if and only if (i) the label function $\tau:W\rightarrow P:w\mapsto p$
for $w\in X_{p}$ is continuous, and (ii) $\tau(A)$ is open whenever
$A$ is open, where $P$ is given the usual topology whereby all upper
sets are open. In this case it then follows that $X_{Q}$ is open
iff $Q$ is an upper subset of $P$ and $X_{Q}$ is closed iff $Q$
is a lower subset of $P$. 

For $P$-partitions of $W$, if $X_{Q}$ is closed, then $Q$ is a
lower subset of $P$ and if $X_{Q}$ is open, then $Q$ is an upper
subset of $P$, but the converse is not in general true --- e.g.\ see
Example~\ref{Good example}. However, for a complete $P$-partition,
if $Q$ is a lower set and has a finite ceiling, then $X_{Q}$ is
closed. 

For semi-trim $P$-partitions of an $\omega$-Stone space, it can
be shown that $\overline{X_{Q}}=\{w\in W\mid I_{w}\subseteq Q_{\downarrow}\cap\widehat{P}\}$.
In particular:

(i) if $\mathscr{X}$ is a trim $P$-partition of $W$, then $\overline{X_{Q}}\cap X_{P}=X_{Q_{\downarrow}}$,
and $X_{Q}$ is closed iff $Q$ is a lower subset of $P$ such that
$Q\cap\widehat{P}$ satisfies the ACC;

(ii) if $\mathscr{X}$ is a complete semi-trim $P$-partition of $W$,
$X_{Q}$ is closed iff $Q$ is a lower subset of $P$ and contains
the supremum of every ideal of $Q\cap\widehat{P}$.
\end{rem}

Our next results look at how the topological properties of isolated
points and compactness reflect the order properties of the poset /
PO system.
\begin{prop}
\label{Isolated propn}(Isolated points)

Let $P$ be a poset or PO system, $W$ a $T_{1}$ topological space
and $\{X_{p}\mid p\in P\}^{*}$ a semi-trim $P$-partition of $W$.
Then the isolated points of $W$ are precisely the isolated points
of each $X_{p}$ for which $p\in P_{\max}$, which if $P$ is a PO
system is just $X_{Q}$ where $Q=P^{d}\cap P_{\max}$.
\end{prop}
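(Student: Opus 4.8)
The plan is to prove both inclusions: every isolated point of $W$ lies in some $X_p$ with $p$ maximal and is isolated in that $X_p$; and conversely every point isolated in some $X_p$ with $p \in P_{\max}$ is isolated in $W$.

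For the forward direction, suppose $w$ is isolated in $W$, so $\{w\}$ is open. Then $\{w\}$ is a trim neighbourhood of $w$: since $T(\{w\}) = \{\tau(w)\}$, writing $p = \tau(w)$ we see that $\{q \mid q \geqslant p\} = \{p\}$, forcing $p \in P_{\max}$. Also $w$ is a clean point of $X_p$ with $\{w\}$ a $p$-trim neighbourhood, and $\{w\} \cap X_p = \{w\}$ is open in $X_p$, so $w$ is isolated in $X_p$. (Here I use that $w \in X$ because $X$ is dense in $W$ and $\{w\}$ is open, or more directly that property (i) of the semi-trim definition gives $w$ a neighbourhood base of trim sets, and the only trim set contained in $\{w\}$ is $\{w\}$ itself.)

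For the converse, suppose $p \in P_{\max}$ and $w$ is isolated in $X_p$, say $\{w\} = U \cap X_p$ for some open $U \subseteq W$. By property (i), $w$ has a trim neighbourhood $A \subseteq U$, and by Proposition~\ref{semi-trim closure}(ii)(b) we may take $A$ to be $\tau(w)$-trim; but $\tau(w) = p$ since $w \in X_p$, so $A$ is $p$-trim. Since $p$ is maximal, $T(A) = \{q \mid q \geqslant p\} = \{p\}$, so $A \cap X = A \cap X_p \subseteq U \cap X_p = \{w\}$, giving $A \cap X = \{w\}$. Since $X$ is dense in $W$ and $A$ is open and nonempty, $A$ must equal $\{w\}$ (any other point of the open set $A$ would have to be approached by points of $X$, but $A \cap X = \{w\}$; more carefully, if $A \neq \{w\}$ then $A - \{w\}$ is a nonempty open set — using $T_1$ to see $\{w\}$ is closed — disjoint from $X$, contradicting density). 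Hence $\{w\}$ is open in $W$ and $w$ is isolated in $W$.

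The main subtlety is the last step, where I need the $T_1$ hypothesis: it is used precisely to ensure $A - \{w\}$ is open so that density of $X$ can be invoked to conclude $A = \{w\}$. Everything else is a routine unwinding of the definitions of trim sets, clean points, and the label/type functions, together with Proposition~\ref{semi-trim closure}(ii)(b) to upgrade a trim neighbourhood to one of the correct index. I would also remark that this shows the isolated points of $X_p$ for $p \notin P_{\max}$ are never isolated in $W$, which is the content hidden in the word ``precisely''.
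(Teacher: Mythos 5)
Your forward direction is fine and matches the paper. The converse, however, has a genuine gap at the step ``by Proposition~\ref{semi-trim closure}(ii)(b) we may take $A$ to be $\tau(w)$-trim''. That proposition only says that, for $w\in X_{p}$, having a single $p$-trim neighbourhood is equivalent to having a neighbourhood base of $p$-trim sets, i.e.\ it characterises clean points; it does not assert that $w$ has any $p$-trim neighbourhood at all. In a semi-trim partition $w$ could a priori be a \emph{limit} point of $X_{p}$, in which case every trim neighbourhood $A\ni w$ has $t(A)<p$, and then your computation $T(A)=\{q\mid q\geqslant p\}=\{p\}$, on which the density argument rests, fails: $A\cap X$ may meet many $X_{q}$ with $q\geqslant t(A)$, $q\neq p$, and you cannot conclude $A=\{w\}$. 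Ruling out the limit-point case is precisely the crux of this direction, and it is where the paper uses the $T_{1}$ hypothesis a second time.

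The missing argument (essentially the paper's): choose a trim $A\subseteq U$ with $w\in A$ (property (i) of the definition), and suppose $t(A)<p$. Since $A$ is $t(A)$-trim there is $y\in A\cap X_{t(A)}$, and $y\neq w$ because $w\in X_{p}$; as $W$ is $T_{1}$, $A-\{w\}$ is open and contains $y$, so $t(A)\in T(A-\{w\})\cap\widehat{P}$ and semi-trimness (property (ii)) yields a $t(A)$-trim set $C\subseteq A-\{w\}$. Then $C\cap X_{p}\neq0$ because $p\geqslant t(A)$, contradicting $U\cap X_{p}=\{w\}$. Hence $t(A)=p$, and from there your argument is correct: $T(A)=\{p\}$ by maximality, so $A\cap X=\{w\}$, and your density-plus-$T_{1}$ step (a mild variant of the paper's, which instead produces a forced $p$-trim neighbourhood inside $A-\{w\}$ for any putative second point of $A$) gives $A=\{w\}$, so $w$ is isolated in $W$.
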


\begin{proof}
Suppose $w\in W$ is isolated. Then $\{w\}$ must be a trim set: say
$\{w\}$ is $p$-trim. Then clearly $p\in P_{\max}$, $w\in X_{p}$
and $w$ is isolated in $X_{p}$. 

Conversely, if $p\in P_{\max}$ and $w$ is isolated in $X_{p}$,
then $w$ is a clean point in $X_{p}$ (STP4), so we can find a $p$-trim
$A$ such that $A\cap X_{p}=\{w\}$. If now $y\in A$ with $y\neq w$,
then $y$ has a $p$-trim neighbourhood contained in $A-\{w\}$, as
$p\in P_{\max}$, which is impossible. Hence $A=\{w\}$, so $\{w\}$
is open and $w$ is isolated, as required. The final statement is
immediate.
\end{proof}
\begin{prop}
\label{Compact means FF}(Compactness)

Let $P$ be a poset or PO system and $W$ a topological space that
admits a semi-trim $P$-partition $\{X_{p}\mid p\in P\}^{*}$. Then:

(i) if $A\subseteq W$ is compact and either $A$ is open or $T(A)$
is a lower set, then $T(A)$ viewed as a poset in its own right has
a finite foundation;

(ii) if $W$ is compact, then $P$ has a finite foundation;

(iii) if $\overline{X_{p}}$ is compact, then $p\in P_{\Delta}$.
\end{prop}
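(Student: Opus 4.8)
The plan is to prove the three parts in order, each time exploiting the fact that a trim neighbourhood $A$ with $t(A)=p$ has type $T(A)=\{q\in P\mid q\geqslant p\}$, so that the types of the trim sets covering a compact set encode exactly the order-theoretic data a finite foundation must capture. For part (i), suppose $A$ is compact. By the semi-trim axiom (i), each point of $A$ has a trim neighbourhood, and by compactness finitely many such trim sets $A_1,\dots,A_n$ cover $A$; set $t(A_i)=p_i$ and $F=\{p_1,\dots,p_n\}$. To see $F$ is a finite foundation for $Q:=T(A)$ (as a poset in its own right), take $q\in Q$ and $r\leqslant q$ with $r\in Q$; pick $x\in A\cap X_q$, which lies in some $A_i$, so $q\in T(A_i)$, i.e.\ $q\geqslant p_i$. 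If $A$ is open, then $A\cap A_i$ is open and $r\in T(A)\cap\widehat P$ forces (semi-trim axiom (ii)) $A$ to contain an $r$-trim subset $B$, and $B$ meets some $A_j$, giving $r\geqslant p_j$ with $p_j\in Q$; if instead $T(A)$ is a lower set, then $r\leqslant q\leqslant$ nothing is needed beyond $p_i\leqslant q$ and $p_i\in T(A_i)\subseteq T(W)$, but we must still check $p_i\in Q$ — here the lower-set hypothesis gives $p_i\leqslant q\in Q\Rightarrow p_i\in Q$ directly, and $p_i\leqslant r$? No: we instead argue $r\geqslant$ some element of $F$ by noting $x'\in A\cap X_r$ (which exists since $r\in T(A)$) lies in some $A_k$, so $r\geqslant p_k$ and $p_k\in T(A_k)$; since $p_k\leqslant r\leqslant q\in Q$ and $Q$ is lower, $p_k\in Q$. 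Finally condition (ii) of the finite-foundation definition holds because each $p_i\in T(A_i)\cap Q$ — one checks $p_i\in T(A)$ either because $A_i\subseteq A$ (open case, after intersecting) or because $p_i\leqslant q$ for any $q\in T(A_i)\cap T(A)$ and $Q$ is lower.

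Part (ii) is the special case $A=W$ (which is open), giving that $P=T(W)$ has a finite foundation; one should double check the full poset $P$ equals $T(W)$, which holds because every $X_p$ is nonempty by the definition of a $P$-partition. Part (iii) applies part (i) with $A=\overline{X_p}$: this set is compact by hypothesis and $T(\overline{X_p})=\bigcup_{q\leqslant p}X_q$'s label set $=\{q\in P\mid q\leqslant p\}$ by Proposition~\ref{semi-trim closure}(i) (intersecting with $X$, which is dense), so $T(\overline{X_p})$ is a lower set; hence by (i) it has a finite foundation $F$, and since $\{p\}\subseteq T(\overline{X_p})$ with $p$ maximal there, $F$ serves as a finite foundation for $\{p\}$ as well — checking the two defining conditions for $\{p\}$: any $r\leqslant p$ lies in $T(\overline{X_p})$, so some $f\in F$ has $f\leqslant r$, and each $f\in F$ satisfies $f\leqslant$ some element of $T(\overline{X_p})$, hence $f\leqslant p$ after possibly replacing by a minimal element (Lemma~\ref{Finfound}). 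Thus $p\in P_\Delta$.

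The main obstacle I anticipate is handling the two alternative hypotheses in part (i) — "$A$ open" versus "$T(A)$ a lower set" — uniformly, since the "open" case relies on semi-trim axiom (ii) to produce an $r$-trim subset of $A$ for each relevant $r$ (ensuring the foundation element sits below $r$), whereas the "lower set" case instead uses the closure of $T(A)$ downward to guarantee that the foundation elements $p_i$ genuinely lie in $T(A)$ rather than merely in $T(W)$. I would organise the argument to first extract the finite cover $\{A_i\}$ and the candidate set $F=\{t(A_i)\}$, then verify foundation-condition (i) and (ii) by a short case split, and finally note via Lemma~\ref{Finfound} that $F$ may be taken inside $P_{\min}$. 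The remaining steps are routine unwinding of the definitions.
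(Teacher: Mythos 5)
Your overall strategy for (i) is the paper's: cover the compact set $A$ by finitely many trim sets and take their types as the foundation, and your parts (ii) and (iii) are handled exactly as in the paper (take $A=W$, resp.\ $A=\overline{X_p}$ with $T(\overline{X_p})=\{q\mid q\leqslant p\}$ a lower set). However, the execution of (i) has genuine flaws. First, in the open case you route the key step through semi-trim axiom (ii), which only applies when $r\in T(A)\cap\widehat P$; nothing guarantees $r\in\widehat P$ (the points of $X_r$ may all be limit points, with no $r$-trim set existing anywhere in $W$), so this step can fail. Even granting $r\in\widehat P$, the inference ``$B$ meets some $A_j$, giving $r\geqslant p_j$'' is a non sequitur: a point of $B\cap A_j$ need not lie in $X_r$ (it need not lie in $X$ at all), so you only get $s\geqslant r$ and $s\geqslant p_j$ for some $s$. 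The fix is the direct argument you already use in the lower-set case: since $r\in T(A)$, pick $x'\in A\cap X_r$, locate it in a covering set $A_k$, and conclude $r\geqslant p_k$; this works verbatim when $A$ is open, with no appeal to axiom (ii).

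Second, you never properly secure $F\subseteq T(A)$, which is what makes $F$ a foundation of $T(A)$ ``as a poset in its own right.'' Your remedy ``$A_i\subseteq A$ (open case, after intersecting)'' does not work: $A\cap A_i$ need not be trim and need not meet $X_{p_i}$, so intersecting after the fact tells you nothing about $p_i\in T(A)$; instead one must choose the covering trim sets inside $A$ from the outset, which is possible precisely because $A$ is open and every point of $W$ has a neighbourhood base of trim sets. In the lower-set case your justification ``$p_i\leqslant q$ for any $q\in T(A_i)\cap T(A)$'' can be vacuous: the base point of $A_i$ may lie in $W-X$ and $A_i\cap A$ may contain no point of $X$, so $T(A_i)\cap T(A)$ may be empty and such a $p_i$ need not lie in $T(A)$ at all. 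The paper avoids this by simply defining $F=\{t(A_{x_i})\mid i\leqslant n\}\cap T(A)$ (discarding the useless types); this is harmless because the foundation condition only ever calls on a $t(A_{x_i})$ witnessed by a point of $A\cap X_q$, and for such an index $t(A_{x_i})\in T(A)$ either because $A_{x_i}\subseteq A$ (open case) or because $T(A)$ is a lower set. With these two repairs your argument becomes exactly the paper's proof.
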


\begin{proof}
For (i), let $A$ be a compact subset of $W$, and for each $x\in A$
find $A_{x}\in V_{x}$, with $A_{x}\subseteq A$ if $A$ is open.
By compactness, we can find a finite subset $\{A_{x_{1}},\ldots,A_{x_{n}}\}$
of trim sets that cover $A$. Now take $F=\{t(A_{x_{i}})\mid i\leqslant n\}\cap T(A)$:
this is a finite foundation for $T(A)$, as for any $q\in T(A)$ and
$y\in A\cap X_{q}$ we have $q\geqslant t(A_{x_{i}})$ if $y\in A_{x_{i}}$,
and $t(A_{x_{i}})\in T(A)$ as either $A_{x_{i}}\subseteq A$ or $T(A)$
is a lower set. 

(ii) follows immediately from (i), taking $A=W$.

For (iii) $T(\overline{X_{p}})=\{q\in P\mid q\leqslant p\}$, so if
$\overline{X_{p}}$ is compact then by (i) $T(\overline{X_{p}})$
has a finite foundation and $p\in P_{\Delta}$.
\end{proof}
\begin{rem}
Example~\ref{Good example} shows that a compact $W$ may admit a
(non-semi-trim) $P$-partition even though $P$ does not have a finite
foundation.
\end{rem}

\begin{prop}
Let $P$ be a poset or PO system, $W$ a topological space that admits
a $P$-partition that is either complete or semi-trim, and $A$ a
compact closed subset of $W$. Then every descending chain in $T(A)$
has a lower bound.
\end{prop}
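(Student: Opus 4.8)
The plan is to combine compactness of $A$ with the closure formula $\overline{X_p}=\bigcup_{q\leqslant p}X_q$ in a finite-intersection-property argument, and then read off a lower bound from the label of a point in the resulting intersection. Let $D\subseteq T(A)$ be a descending chain. First I would, for each $p\in D$, consider the set $A\cap\overline{X_p}$. Since $p\in T(A)$ we have $A\cap X_p\neq 0$, hence $A\cap\overline{X_p}\neq 0$; and since $\overline{X_p}=\bigcup_{q\leqslant p}X_q$ is closed, $A\cap\overline{X_p}$ is a closed, and therefore compact, subset of $A$.

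Next I would check that $\{A\cap\overline{X_p}\mid p\in D\}$ is a chain under inclusion: if $p'\leqslant p$ in $D$, then $q\leqslant p'$ implies $q\leqslant p$, so $\overline{X_{p'}}\subseteq\overline{X_p}$ and hence $A\cap\overline{X_{p'}}\subseteq A\cap\overline{X_p}$. Consequently any finite subcollection of this family has nonempty intersection (namely its smallest member), so the family has the finite intersection property. By compactness of $A$ we conclude $\bigcap_{p\in D}(A\cap\overline{X_p})\neq 0$; fix $x$ in this intersection.

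Finally, since $\mathscr{X}$ is a good $P$-partition it is complete, so $x\in X_r$ for some $r\in P$, i.e. $r=\tau(x)$. For each $p\in D$ we have $x\in\overline{X_p}=\bigcup_{q\leqslant p}X_q$, and since the classes $X_q$ are pairwise disjoint this forces $r\leqslant p$. Thus $r$ is a lower bound for $D$ in $P$; moreover $x\in A\cap X_r$, so in fact $r\in T(A)$. (The lower bound need not be unique: distinct points of the intersection may carry distinct labels, and $D$ need not have a greatest lower bound at all.)

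I do not expect a genuine obstacle. The one point requiring care is that a good partition is by definition complete, which is exactly what lets us assign a label to the point $x$ produced by the compactness argument and thereby obtain a lower bound; the finite-intersection step itself is routine once the sets $A\cap\overline{X_p}$ are identified as the right objects to intersect.
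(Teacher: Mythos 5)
Your proof is correct and is essentially the paper's own argument: intersect the compact closed sets $A\cap\overline{X_p}$ along the chain, use compactness to get a point $x$ in the intersection, and read off the lower bound from the label of $x$ via $\overline{X_p}=\bigcup_{q\leqslant p}X_q$ and completeness of the good partition. The only (harmless) difference is that you phrase the compactness step via the finite intersection property for an arbitrary descending chain, where the paper writes the chain as a sequence $p_1\geqslant p_2\geqslant\cdots$ and intersects directly.
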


\begin{proof}
For semi-trim partitions, this follows easily from Proposition~\ref{Compact means FF}(i),
as $T(A)$ is a lower set. Suppose$\mathscr{X}=\{X_{p}\mid p\in P\}^{*}$
is a complete $P$-partition of $W$, $A$ is a compact closed subset
of $W$, and $p_{1}\geqslant p_{2}\geqslant\cdots$ is a descending
chain of elements in $T(A)$. Then $Y=\bigcap_{i\geqslant1}(A\cap\overline{X_{p_{i}}})\neq\emptyset$,
as $A$ is compact and closed. Choose $y\in Y$: say $y\in X_{q}$;
then $q$ is a lower bound for $\{p_{i}\}$.
\end{proof}
\begin{example}
The lower bound above may not be unique, even for a complete trim
partition. For example, consider $P=\{p_{1},p_{2},p_{3},\ldots,q,r\}$,
where $p_{1}\gneqq p_{2}\gneqq p_{3}\gneqq\cdots$, with $p_{n}\gneqq q$
and $p_{n}\gneqq r$ for all $n$. $P$ satisfies the ACC, and by
Theorem~\ref{Existence theorem} below there is a Stone space $W$
with a complete trim $P$-partition, but the descending chain $\{p_{n}\}$
has two lower bounds.
\end{example}

\section{\label{sec:Characterising-partitions-by}Trim partitions and primitive
Boolean rings}

We have defined semi-trim partitions by reference to a topological
space. For trim partitions of the Stone space of a Boolean ring, we
can achieve an equivalent definition via the Boolean ring and a ``structure
function'' as introduced by Hanf~\cite{Hanf}, without reference
to the Stone space. This will provide a convenient way of constructing
trim partitions by first defining an appropriate structure function.

Trim sets are ``topologically pseudo-indecomposable'' in that if
$A$ is $p$-trim and is the disjoint union of two elements of $R$,
then one of those sets will also be $p$-trim. Our first goal is to
show that when $P$ is a PO system, the ideal of $R$ generated by
a trim set is algebraically pseudo-indecomposable, using the following
version of Vaught's Theorem, as cited in~\cite[1.1.3]{PierceMonk}. 
\begin{thm}
\label{(Vaught)}(Vaught) Suppose $\sim$ is a relation between elements
of countable Boolean algebras $R$ and $S$ such that:

(i) $1_{R}\sim1_{S}$;

(ii) if $C\sim0_{S}$, then $C=0_{R}$; and vice versa;

(iii) if $C\sim(D_{1}\dotplus D_{2})$ ($D_{1},D_{2}\in S$), then
we can write $C=C_{1}\dotplus C_{2}$, with $C_{i}\in R$ and $C_{i}\sim D_{i}$
$(i=1,2)$; and vice versa.

Then there is an isomorphism $\alpha:R\rightarrow S$ such that each
$C\in R$ can be expressed as $C=C_{1}\dotplus\ldots\dotplus C_{n}$
where $C_{i}\sim C_{i}\alpha$ for all $i\leqslant n$.
\end{thm}

The following Proposition regarding semi-trim partitions of Stone
spaces is fundamental. This, together with Theorem~\ref{Thm:same mu=00003Dhomeom}
and Corollary~\ref{cor:trim=00003Dpi}, generalise some known results
for Boolean algebras: e.g.\ see Myers~\cite[\S9,14,15]{Myers}.
\begin{prop}
\label{Propn std split of trim set}Let $P$ be a poset or PO system,
and $R$ a Boolean ring whose Stone space admits a semi-trim $P$-partition
$\{X_{p}\mid p\in P\}^{*}$. 

Let $A\in R$. Then there is a finite partition of $A$ containing
precisely $n_{p}$ $p$-trim sets for each $p\in F$, writing $F=T(A)_{\min}$,
where:

(i) if $P$ is a PO system, then $n_{p}=1$ for $p\in F-P^{d}$ and
$n_{p}=|A\cap X_{p}|$ for $p\in F\cap P^{d}$;

(ii) if $P$ is a poset, then $n_{p}=1$ for each $p\in F$.
\end{prop}

\begin{proof}
By Proposition~\ref{Compact means FF} and Lemma~\ref{Finfound},
$F=T(A)_{\min}$ is a finite foundation for $T(A)$ as $A$ is compact
and open.

For each $x\in A$ find a trim set $A_{x}\subseteq A$ containing
$x$. By compactness, we can find a finite subset $\{A_{x_{1}},\ldots,A_{x_{n}}\}$
of trim sets that cover $A$. Let $\mathscr{B}=\{B_{i}\}$ be the
atoms of the finite Boolean ring generated by $\{A_{x_{j}}\}$, so
that $\mathscr{B}$ is a finite partition of $A$ with each $B_{i}$
a subset of one of the $A_{x_{j}}$. For each $p\in F$, we can find
a $B_{i}$ meeting $X_{p}$, and any $B_{i}$ meeting $X_{p}$ must
be $p$-trim, as it is a subset of a $q$-trim subset of $A$ for
some $q\in T(A)$, and $T(B_{i})$ contains the minimal element $p$
of $T(A)$, hence $q=p$. 

For (i), we can now write $\mathscr{B}=\mathscr{B}_{1}\cup\mathscr{B}_{2}\cup\mathscr{B}_{3}$,
where $\mathscr{B}_{1}$ contains all the elements of $\mathscr{B}$
that meet $X_{p}$ for some $p\in F\cap P^{d}$ (which are each $p$-trim),
$\mathscr{B}_{2}$ contains a single $p$-trim element of $\mathscr{B}$
for each $p\in F-P^{d}$, and $\mathscr{B}_{3}$ contains the remaining
elements of $\mathscr{B}$. For each $C\in\mathscr{B}_{3}$, we have
$C\subseteq A_{x_{j}}$ for some $j$, and we can find $r\in F$ with
$r\leqslant t(A_{x_{j}})$. Find $D\in\mathscr{B}_{1}\cup\mathscr{B}_{2}$
such that $t(D)=r$; then $D\cup C$ is still $r$-trim, as $T(C)\subseteq T(D)$,
and if $r\in F\cap P^{d}$ then $C\cap X_{r}=\emptyset$ by definition
of $\mathscr{B}_{1}$; so we can replace $D$ with $D\cup C$. Proceeding
in this way, we obtain the desired finite partition $\widetilde{\mathscr{B}_{1}}\cup\widetilde{\mathscr{B}_{2}}$
of $A$ as, by counting, $\widetilde{\mathscr{B}_{1}}$ will contain
$|A\cap X_{p}|$ $p$-trim sets for each $p\in F\cap P^{d}$.

For (ii), write $\mathscr{B}=\mathscr{B}_{2}\cup\mathscr{B}_{3}$,
where $\mathscr{B}_{2}$ contains one $p$-trim element of $\mathscr{B}$
for each $p\in F$ and $\mathscr{B}_{3}=\mathscr{B}-\mathscr{B}_{2}$,
and proceed as in (i) to create a partition $\widetilde{\mathscr{B}_{2}}$
of $A$ with exactly one $p$-trim set for each $p\in F$.
\end{proof}
\begin{defn}
If $P$ is a poset or PO system, $R$ is a Boolean ring whose Stone
space admits a semi-trim $P$-partition $\mathscr{X}=\{X_{p}\mid p\in P\}^{*}$,
and $A\in R$, we define the \emph{$\mathscr{X}$-measure of $A$}
as the formal sum $\mu(A)=\sum_{p\in F}n_{p}.p$, where $F=T(A)_{\min}$
and $\{n_{p}\mid p\in F\}$ are as in Proposition~\ref{Propn std split of trim set},
setting $\mu(0)=0$. 
\end{defn}

\begin{rem}
For semi-trim partitions of Stone spaces, this Proposition provides
a more precise way of describing the type of elements of the associated
Boolean ring, as it is easy to see that any minimal decomposition
of $A$ into trim sets has the stated form. The formal sum $\mu(A)$
can be viewed as an element of a monoid: e.g.\ see Myers~\cite[\S9]{Myers}. 

If $P$ is a poset, we could equivalently define $\mu(A)$ for $A\in R$
to be the finite set of minimal elements of $T(A)$. This will be
an antichain, and every finite antichain in $P$ can arise in this
way. The trim sets are just those for which $\mu(A)$ is a singleton
set. 
\end{rem}

\begin{thm}
\label{Thm:same mu=00003Dhomeom}Let $P$ be a PO system and $\mathscr{X}$
and $\mathscr{Y}$ trim $P$-partitions of the Stone spaces $X$ and
$Y$ of countable Boolean rings $R$ and $S$ respectively. Suppose
$A\in R$ and $B\in S$ such that $\mu(A)=\mu(B)$, and that $a\in A$
and $b\in B$ such that $I_{a}=I_{b}$. 

Then there is a $P$-homeomorphism $\alpha:A\rightarrow B$ such that
$a\alpha=b$.
\end{thm}

\begin{proof}
Define a relation between the Boolean algebras $(A)$ and $(B)$ by
setting $C\sim D$ iff $\mu(C)=\mu(D)$, and either $a\in C$ and
$b\in D$ or $a\notin C$ and $b\notin D$, for $C\subseteq A$ and
$D\subseteq B$. Clearly $C\sim0$ iff $C=0$, and $A\sim B$.

We must show that $\sim$ satisfies criterion (iii) of Theorem~\ref{(Vaught)}.
By symmetry, it is enough to consider the case where $C\subseteq A$,
$C\sim D$ and $D=D_{1}\dotplus D_{2}$, where $D_{1},D_{2}\subseteq B$.
Let $T:R\rightarrow2^{P}$ be the type function of $\mathscr{X}$;
write $G=T(C)_{\min}$, and if $b$ is a clean point let $q=\sup_{P}I_{b}\in\widehat{P}$,
so that $b\in X_{q}$.

Using Proposition~\ref{Propn std split of trim set}(i), write $D_{1}=E_{1}\dotplus\ldots\dotplus E_{m}$
and $D_{2}=E_{m+1}\dotplus\ldots\dotplus E_{n}$, where each $E_{j}$
is $p_{j}$-trim for some $p_{j}$; if $b\in D$, we may assume that
$b\in E_{1}$, and if also $b$ is a clean point then choose $E_{1}\subseteq D_{1}$
to be a $q$-trim neighbourhood of $b$ (applying Proposition~\ref{Propn std split of trim set}(i)
to $D_{1}-E_{1}$). For each $j$, we can find distinct $y_{j}\in E_{j}\cap Y_{p_{j}}$
and $x_{j}\in C\cap X_{p_{j}}$, as $C\cap X_{p_{j}}$ is infinite
unless $p_{j}\in G\cap P^{d}$ (Proposition~\ref{basic properties}(iv)),
in which case $|C\cap X_{p_{j}}|=|D\cap Y_{p_{j}}|$ as $\mu(C)=\mu(D)$.
Find disjoint $p_{j}$-trim neighbourhoods $F_{j}\subseteq C$ of
each $x_{j}$ and let $F=F_{1}\dotplus\ldots\dotplus F_{n}$. 

If $C-F\neq\emptyset$, write $C-F=F_{n+1}\dotplus\ldots\dotplus F_{n+k}$,
where each $F_{n+i}$ is trim. For each $i\leqslant k$, we can find
$r\in G$ such that $r\leqslant t(F_{n+i})$, and $j\leqslant n$
such that $p_{j}=r$, as one of the $E_{j}$ must be $r$-trim. Then
$F_{j}\cup F_{n+i}$ will still be $r$-trim, as $T(F_{n+i})\subseteq T(F_{j})$,
and if $r\in P^{d}$ then $F_{n+i}\cap X_{r}=\emptyset$ as $|C\cap X_{r}|=|D\cap Y_{r}|=|F\cap X_{r}|<\infty$.
So we can replace $F_{j}$ with $F_{j}\cup F_{n+i}$. Proceeding in
this way, we may assume that $C=F_{1}\dotplus\ldots\dotplus F_{n}$,
with $F_{j}$ being $p_{j}$-trim.

If $a\notin C$ and $b\notin D$, or if $a\in F_{1}$ and $b\in E_{1}$,
we can take $C_{1}=F_{1}\dotplus\ldots\dotplus F_{m}$ and $C_{2}=F_{m+1}\dotplus\ldots\dotplus F_{n}$
to finish, as $\mu(C_{i})=\mu(D_{i})$ ($i=1,2$). Otherwise we have
$b\in E_{1}$ and $a\in F_{j}$, say, with $j>1$. There are 3 cases.

Case 1: if $b$ is a clean point and $E_{1}$ and $F_{j}$ are both
$q$-trim, we can swap $F_{1}$ and $F_{j}$ to obtain $a\in F_{1}$.

Case 2: if $b$ is a clean point, $E_{1}$ is $q$-trim but $F_{j}$
is $r$-trim where $r\lneqq q$, then $a$ is clean (as $I_{a}=I_{b}=\{r\in P\mid r\leqslant q\}$),
so we can find a $q$-trim neighbourhood $F_{j1}\subseteq F_{j}$
of $a$; let $F_{j2}=F_{j}-F_{j1}$ which is $r$-trim. Now replace
$F_{1}$ with $F_{j1}$, and $F_{j}$ with the $r$-trim set $F_{1}\cup F_{j2}$,
to obtain $a\in F_{1}$.

Case 3: if $b$ is a limit point, then $I_{b}$ is an ideal that does
not contain its own supremum, so we can find $F_{j1}\subseteq F_{j}$
such that $F_{j1}$ is an $r$-trim neighbourhood of $b$ for some
$r\in I_{b}$ such that $r\gneqq p_{1}$ and $r\gneqq p_{j}$, as
$\{p_{1},p_{j}\}\subseteq I_{b}$. Replace $F_{1}$ with $F_{1}\cup F_{j1}$
and $F_{j}$ with $F_{j}-F_{j1}$ to complete this step.

So by Theorem~\ref{(Vaught)} there is an isomorphism $\beta:(A)\rightarrow(B)$,
and each $C\subseteq A$ can be expressed as $C=C_{1}\dotplus\ldots\dotplus C_{n}$
where $C_{i}\sim C_{i}\beta$ for all $i\leqslant n$, so that $T(C_{i}\beta)=T(C_{i})$,
and $C_{i}$ is $p$-trim iff $\mu(C_{i})=1.p$ iff $C_{i}\beta$
is $p$-trim. But $T(C\cup D)=T(C)\cup T(D)$, and so $\beta$ is
type-preserving on all of $(A)$; moreover, if $C$ is $p$-trim for
$p\in P^{d}$, then only one of the $C_{i}$ is $p$-trim, so $|C\beta\cap Y_{p}|=1$
and $C\beta$ is $p$-trim. Hence $\Trim_{p}(\mathscr{X})\beta=\Trim_{p}(\mathscr{Y})$
for all $p\in P$.

Let $\alpha:A\rightarrow B$ be the Stone space homeomorphism induced
by $\beta$. If $x\in A\cap X_{p}$, then $x$ and hence also $x\alpha$
has a neighbourhood base of $p$-trim sets. It follows that $x\alpha\in B\cap Y_{p}$,
as $\mathscr{Y}$ is full, and so $(X_{p}\cap A)\alpha\subseteq Y_{p}\cap B$.
Equality follows by considering $\alpha^{-1}$, so $\alpha$ is a
$P$-homeomorphism. Finally for $C\subseteq A$, $b\in C\alpha$ iff
$a\in C$ iff $a\alpha\in C\alpha$, from which it follows easily
that $a\alpha=b$, as required.
\end{proof}
\begin{cor}
\label{cor:trim=00003Dpi}Let $P$ be a PO system and $\mathscr{X}$
a trim $P$-partition of the Stone space $W$ of the countable Boolean
ring $R$. Then 

(i) if $A\in R$ is trim, then $(A)$ is pseudo-indecomposable;

(ii) for $a,b\in W$, there is a $P$-homeomorphism $\alpha$ of $W$
such that $a\alpha=b$ iff $I_{a}=I_{b}$.
\end{cor}

\begin{proof}
(i) If $A=A_{1}\dotplus A_{2}$, with $A_{i}\in R$, then either $A_{1}$
or $A_{2}$ must be $t(A)$-trim, and so by Theorem~\ref{Thm:same mu=00003Dhomeom}
(choosing $a=b$) either $(A_{1})$ or $(A_{2})$ is isomorphic to
$(A)$, and hence $(A)$ is pseudo-indecomposable. 

(ii) $\Rightarrow$ is immediate as if $A\in V_{a}$ then $A\alpha\in V_{b}$
and $t(A\alpha)=t(A)$ for any $P$-homeomorphism $\alpha$ of $W$.

$\Leftarrow$ Find $A\in R$ such that $a,b\in A$ and apply Theorem~\ref{Thm:same mu=00003Dhomeom}
with $B=A$ to obtain a $P$-homeomorphism $\alpha$ of $A$ such
that $a\alpha=b$, which can be extended to a $P$-homeomorphism of
$W$ by setting $w\alpha=w$ for $w\in W-A$. 
\end{proof}
\begin{rem}
\label{rem:Index partition}If $\mathscr{X}$ is a trim $P$-partition
of $W$, then we can define a complete partition $\mathscr{Z}$ whose
elements are the orbits of $P$-homeomorphisms of $W$. By the previous
Corollary, elements of $\mathscr{Z}$ have the form $\{w\in W\mid I_{w}=J\}$
for ideals $J$ of $P$. This generalises the \emph{orbit diagram
}$\mathscr{Y}$ of a Stone space $W$ (see~\cite[\S15]{Myers}),
which consists of the orbits of elements of $W$ under homeomorphisms
of $W$. If $W$ is a primitive space, its orbit diagram is a complete
strongly semi-trim partition of $W$, whose clean points form the
trim partition $\mathscr{X}$ referred to in Example~\ref{exa:Primitive spaces}.
For this canonical trim partition, the converse of (i) is also true
(i.e.\ p.i.\ sets are trim), but this is not true for a general
trim $\mathscr{X}$.
\end{rem}

We now recall Hanf's definition of a structure function and show how
each such function is grounded in a trim partition of the Stone space.
\begin{defn}
\label{def:structure}(Hanf~\cite{Hanf}) If $P$ is a PO system,
$R$ is a Boolean ring and $S\subseteq R-\{0\}$, a surjective map
$u:S\rightarrow P$ \emph{structures} $R$ if (writing $S_{p}=\{A\in S\mid u(A)=p\}$):
\end{defn}

\begin{description}
\item [{S1}] $S$ disjointly generates $R$
\item [{S2}] if $A,B\in S$ and $B\subseteq A$ then $u(B)\geqslant u(A)$
\item [{S3}] if $A,B,C\in S_{p}$ and $A\supseteq B\dotplus C$, then $p<p$
\item [{S4}] if $A\in S_{p}$ and $A=B\dotplus C$, then we can find $D\in S_{p}$
such that $D\subseteq B$ or $D\subseteq C$
\item [{S5}] if $A\in S$, and $u(A)=p<q$, then we can find $B,C\in S$
such that $A\supseteq B\dotplus C$, $u(B)=p$ and $u(C)=q$
\end{description}
\begin{rem}
Hanf's original definition applied to Boolean algebras and required
$P$ to have a unique minimum element, being the ``type'' of $1_{R}$.
We have extended the definition to include Boolean rings and dropped
the unique minimum element requirement. 

There is also a choice to be made as to the direction of the order
relation. Hanf along with many other authors adopt the convention
for Boolean algebras whereby the unit element is of maximum type in
$P$. This is certainly a natural choice when considering (for example)
isomorphism classes of ideals $(A)$ for $A\in R$. However, when
working with partitions of the Stone space, it seems more intuitive
that partition elements $X_{p}$ for $p\in P^{d}$, which are discrete
and in some cases singleton subsets, correspond to \emph{minimum }elements
$p\in P$. We have adopted the latter convention, consistent with
the order direction in Ziegler's definition of a good partition (\cite{FlumZiegler}),
with the benefit that the open subsets of $W$ of the form $X_{Q}$
correspond to upper subsets $Q$ of $P$, which are the open subsets
under the usual Alexandroff topology on $P$.
\end{rem}

\begin{prop}
\label{structure=00003Dtrim}Let $P$ be a PO system and $R$ a countable
Boolean ring with Stone space $W$. 

(i) If $u:S\rightarrow P$ structures $R$, then there is a unique
trim $P$-partition $\mathscr{X}(u)$ of $W$ such that $S_{p}\subseteq\Trim_{p}(\mathscr{X}(u))$
for all $p\in P$;

(ii) if $\mathscr{X}$ is a trim $P$-partition of $W$, $t:\Trim(\mathscr{X})\rightarrow P$
structures $R$.
\end{prop}

\begin{proof}
(i) Suppose that $u:S\rightarrow P$ structures $R$. For $p\in P$
and $w\in W$, let $S_{p}(w)=\{A\in S_{p}\mid w\in A\}$, and define
a partition $\mathscr{X}(u)=\{X_{p}\mid p\in P\}^{*}$ of $W$, where
$X_{p}=\{w\in W\mid S_{p}(w)\text{ is a neighbourhood base for }w\}$.

We claim first that $A\cap X_{p}\neq\emptyset$ when $A\in S_{p}$.
Let $\{B_{1},B_{2},\ldots\}$ be an enumeration of $R$. Let $A_{0}=A$,
and using S4 choose $A_{n}\in S_{p}$ successively for $n\geqslant1$
such that $A_{n}\subseteq A_{n-1}$ and (writing $A_{n-1}=(A_{n-1}\cap B_{n})\dotplus(A_{n-1}-B_{n})$)
either $A_{n}\subseteq B_{n}$ or $A_{n}\cap B_{n}=\emptyset$. Then
$\bigcap_{n\geqslant1}A_{n}$ is non-empty by compactness and must
be a singleton $\{x_{A}\}$, say, as for any $x,y\in W$ we can find
$n$ such that $x\in B_{n}$ and $y\notin B_{n}$. It follows that
$x_{A}\in X_{p}$ and $p\in T(A)$.

Fix $x\in X_{p}$ and let $A\in S_{p}(x)$; we claim that $A$ is
$p$-trim. If $y\in A\cap X_{q}$ and $y\neq x$, find disjoint $B\in S_{p}$
and $C\in S_{q}$ such that $x\in B\subseteq A$ and $y\in C\subseteq A$.
Then $q>p$ by S2 and S3, and if $p\in P^{d}$ then $q\neq p$ and
so $|A\cap X_{p}|=1$. Conversely, if $q>p$, then by S5 we can find
$C\in S_{q}$ such that $C\subseteq A$, and so $q\in T(C)\subseteq T(A)$;
and if also $q=p$ then $|A\cap X_{p}|\geqslant2$. So $A$ is $p$-trim.
Letting $A$ run through the elements of $S_{p}(x)$, we see that
$x\in X_{q}^{\prime}$ iff $q>p$.

Hence $\mathscr{X}(u)$ is indeed a $P$-partition of $W$, every
element of $W$ has a neighbourhood base of trim sets (by S1) and
all points in $X_{p}$ are clean. Suppose $w\in W$ has a neighbourhood
base of $p$-trim sets and let $A$ be a $p$-trim set containing
$w$. Using S1, we can find $B\in S$ and $C\in R$ such that $w\in C\subseteq B\subseteq A$
and $C$ is $p$-trim. Hence $B$ is $p$-trim; but $B$ is $u(B)$-trim,
hence $u(B)=p$ and $B\in S_{p}(w)$. So $w\in X_{p}$ and $\mathscr{X}(u)$
is full, as required. 

Finally, suppose $\mathscr{Y}=\{Y_{p}\mid p\in P\}^{*}$ is a trim
$P$-partition of $W$ such that $S_{p}\subseteq\Trim_{p}(\mathscr{Y})$
for all $p\in P$. If $S_{p}(w)$ is a neighbourhood base for $w$
then $w\in Y_{p}$, as $S_{p}(w)\subseteq\Trim_{p}(\mathscr{Y})$
and $\mathscr{Y}$ is full; hence $X_{p}\subseteq Y_{p}$. Conversely,
if $w\in Y_{p}$, choose $B\in\Trim_{p}(\mathscr{Y})$ such that $w\in B$;
then if $w\in A\subseteq B$ and $A\in S$, then $A\in\Trim_{p}(\mathscr{Y})$,
so $A\in S_{p}(w)$; hence $w\in X_{p}$ and $\mathscr{Y}=\mathscr{X}$.

(ii) Suppose $\mathscr{X}$ is a trim $P$-partition of $W$. We must
show that $t:\Trim(\mathscr{X})\rightarrow P$ structures $R$. S1
follows from Proposition~\ref{Propn std split of trim set}(i), and
S2 is immediate. If $p\nless p$ and $A\in R$ is $p$-trim, then
$A\cap X_{p}$ is a singleton, from which S3 follows. For S4, we can
assume that $B\cap X_{p}\neq\emptyset$, so we can find a $p$-trim
$D\subseteq B$. For S5, if $q\neq p$, find $x\in A\cap X_{q}$ and
a $q$-trim neighbourhood $C\subseteq A$ of $x$, and we can take
$B=A-C$ to complete, since $C\cap X_{p}=\emptyset$ and so $B\cap X_{p}\neq\emptyset$.
If $q=p$, then $A\cap X_{p}$ has no isolated points, so we can find
$x,y\in A\cap X_{p}$ and a $p$-trim neighbourhood $B$ of $x$ such
that $y\notin B$, and letting $C=A-B$, we must have $t(B)=t(C)=p$,
as required.
\end{proof}
\begin{rem}
A simplified version of this proof shows that if $P$ is a poset,
$R$~is a Boolean ring, $S\subseteq R-\{0\}$ and $u:S\rightarrow P$
satisfies conditions S1, S2, S4 and a new condition S6 (``if $A\in S$,
and $u(A)=p\leqslant q$, then we can find $C\in S$ such that $C\subseteq A$
and $u(C)=q$''), then there is a unique trim $P$-partition $\mathscr{X}$
of the Stone space of $R$ such that $S_{p}\subseteq\Trim_{p}(\mathscr{X})$
($p\in P$).
\end{rem}

We can now state the main result in this section.
\begin{thm}
\label{Primitive=00003Dtrim}A countable Boolean ring is primitive
iff its Stone space admits a trim $P$-partition for some PO system
$P$.
\end{thm}

\begin{proof}
$\Leftarrow$ Follows directly from Corollary~\ref{cor:trim=00003Dpi}
and Proposition~\ref{Propn std split of trim set}(i).

$\Rightarrow$ Suppose $R$ is a countable primitive Boolean ring
with Stone space~$W$. For $A\in R$, write $[A]$ for the isomorphism
class of the ideal $(A)$. Let $S\subseteq R$ consist of all the
p.i.\ elements of $R$, and let $P=\{[A]\mid A\in S\}$, with the
relation $[A]<[B]$ iff $[A]\times[B]\cong[A]$. By Hanf~\cite[Theorem~5.4]{Hanf},
the map $u:S\rightarrow P:A\mapsto[A]$ satisfies Definition~\ref{def:structure}.
It follows from Proposition~\ref{structure=00003Dtrim}(i) that $\mathscr{X}(u)$
is a trim $P$-partition of $W$, in which all the p.i.\ sets are
trim.
\end{proof}
\begin{rem}
The proof of~\cite[Theorem~5.4]{Hanf} uses results about ordered
bases. As an alternative, the map $u:S\rightarrow P:A\mapsto[A]$
is easily seen to satisfy conditions S1, S2, S4 and S5, and the following
Lemma establishes S3.
\end{rem}

\begin{lem}
Suppose $R$ is a countable pseudo-indecomposable Boolean algebra
containing an element $E$ such that $(E)\cong R\times R$. Then $R\cong R\times R$.
\end{lem}

\begin{proof}
Let $S=R\times R$. We will term an element $F$ of $R$ or $S$ an
\emph{$R$-element }if $(F)\cong R$. For $A\in R$ and $B\in S$,
let $A\sim B$ if either $(A)\cong(B)$ or both $(A)$ and $(B)$
contain $R$-elements. Clearly $1_{R}\sim1_{S}$ (e.g.\ $(1_{R},0)\in S$
is an $R$-element) and if $A\sim0$ then $A=0$ and vice versa. Suppose
that $A\sim B$ and $B=B_{1}\dotplus B_{2}$; we must find $A_{i}$
$(i=1,2)$ such that $A=A_{1}\dotplus A_{2}$ and $A_{i}\sim B_{i}$
$(i=1,2)$.

If $(A)\cong(B)$, take $A_{1}$ and $A_{2}$ to be the isomorphic
images of $B_{1}$ and $B_{2}$ respectively. Suppose instead that
$C\subseteq A$ and $D\subseteq B$ with $(C)\cong(D)\cong R$. Then
$D=(D\cap B_{1})\dotplus(D\cap B_{2})$, so we may suppose that $(D\cap B_{1})\cong R$
as $(D)$ is p.i., and so $B_{1}$ contains an $R$-element. There
are 2 cases:

Case 1: $B_{2}$ has no $R$-elements. Then $B_{2}\in S\cong(E)\subseteq R\cong(C)$,
so we can find $A_{2}\subseteq C$ such that $(A_{2})\cong(B_{2})$.
Let $A_{1}=C-A_{2}$; then $C=A_{1}\dotplus A_{2}$ and so $A_{1}$
must be an $R$-element as $R$ is p.i.\ and $A_{2}$ has no $R$-elements.
Hence $A_{1}\sim B_{1}$ and $A_{2}\sim B_{2}$.

Case 2: $B_{2}$ contains an $R$-element. Then $R\times R\cong(E)\subseteq R\cong(C)$,
so (considering $(1_{R},0)$ and $(0,1_{R})\in R\times R$) we can
find $A_{1}\subseteq C$ such that $(A_{1})\cong R$ and such that
$C-A_{1}$, and so also $A_{2}=A-A_{1}$, contains an $R$-element.
Hence $A_{i}\sim B_{i}$ for $i=1,2$.

The case when $A\sim B$ and $A=A_{1}\dotplus A_{2}$ is identical,
except that for case 1 (when $A_{2}$ has no $R$-elements) we have
$A_{2}\in R$, so we can find $B_{2}\subseteq D$ such that $(A_{2})\cong(B_{2})$. 

Hence $\sim$ satisfies the Vaught criteria of Theorem~\ref{(Vaught)}
and $R\cong R\times R$. 
\end{proof}

\section{\label{sec:Completion}Extensions, clean interior and completion
of a semi-trim partition}

In this section we look first at conditions under which a $P$-partition
of an $\omega$-Stone space can be ``regularly'' extended to a $Q$-partition,
where $P\subseteq Q$. Taking $Q$ to be an appropriate subset of
$\overline{P}$, the chain completion of $P$, we can construct the
\emph{completion }of any semi-trim $P$-partition of an $\omega$-Stone
space. We define the \emph{clean interior }of any semi-trim partition
and examine the extent to which completion and clean interior are
``inverse'' operations.

\subsection{Extensions and the clean interior of a semi-trim partition}
\begin{defn}
Let $M,P$ and $Q$ be posets or PO systems with $M\subseteq P\subseteq Q$.
We say that $M$ is \emph{$P$-separated in $Q$} if $p=\sup_{Q}J$
for some $p\in M$ and ideal $J$ of $P$ implies $p\in J$; equivalently,
for countable $P$, if no $p\in M$ is the least upper bound in $Q$
of a strictly increasing sequence in $P$. We say that $M$ is \emph{separated
in $P$} if $M$ is $P$-separated in $P$.
\end{defn}

We start with a Proposition that will be used repeatedly in this section.
\begin{prop}
\label{Prop: Ideals}(Ideals) Let $P$ be a poset or PO system and
let $\mathscr{X}=\{X_{p}\mid p\in P\}^{*}$ be a semi-trim $P$-partition
of the $\omega$-Stone space $W$.

(i) If $J$ is an ideal of $\widehat{P}$ and $B\in\Trim(\mathscr{X})$
with $t(B)\in J$, then we can find $w\in B$ such that $I_{w}=J$;

(ii) If $P$ is a PO system and $\mathscr{X}$ is complete, $P^{d}$
is $\widehat{P}$-separated in $P$;

(iii) If $P$ is countable and $p\in P$, then $\{p\}$ is $\widehat{P}$-separated
in $P$ iff $\{p\}$ is separated in $P$.
\end{prop}

\begin{proof}
(i) By STP6, $\widehat{P}$ is countable, so we can find an ascending
chain $p=p_{1}\leqslant p_{2}\leqslant\cdots$ of elements of $\widehat{P}$
such that $J=\{q\in\widehat{P}\mid q\leqslant p_{n}\text{ for some }n\}$.
Let $R$ be the Boolean ring of compact open subsets of $W$, and
find finite subrings $R_{1}\leqslant R_{2}\leqslant\cdots$ of $R$
such that $B\in R_{1}$ and $R=\bigcup_{n\geqslant1}R_{n}$. Choose
$\{B_{n}\mid n\geqslant1\}$ successively such that $B=B_{1}\supseteq B_{2}\supseteq\cdots$
and $t(B_{n})=p_{n}$, by choosing an atom $C_{n}$ of $R_{n}$ that
meets $B_{n-1}\cap X_{p_{n}}$ (as $B_{1}\in R_{n}$ and $p_{n}\in T(B_{n-1})$)
and a $p_{n}$-trim subset $B_{n}$ of $B_{n-1}\cap C_{n}$, using
semi-trimness. Then $\bigcap_{n\geqslant1}B_{n}$ is a singleton set
$\{w\}$, say, as it is non-empty by compactness, and if $y\in B_{1}$
with $y\neq w$, then $y$ and $w$ belong to different atoms of $R_{n}$
for some $n$, and so $y\notin B_{n}$. It follows that $\{B_{n}\}$
is a neighbourhood base of $w$. But $I_{w}$ is an ideal of $\widehat{P}$
(STP1), and if $D\in I_{w}$, then $D\supseteq B_{n}$ for some $n$
and so $t(D)\leqslant p_{n}$. Hence $I_{w}=J$, as required.

(ii) Suppose $q=\sup_{P}J$ for some $q\in P$ and ideal $J$ of $\widehat{P}$,
with $q\notin J$. Use (i) to find $w\in W$ such that $I_{w}=J$.
By STP1A and completeness, $w\in X_{q}$ and so $w$ is a limit point.
Hence by STP4 $q\notin P^{d}$, and the result follows.

(iii) If $P$ is countable, $\{p\}$ is $\widehat{P}$-separated in
$P$ and $p=\sup_{P}J$ for some ideal $J$ of $P$, apply Lemma~\ref{lem:ideal chain}
with $M=\widehat{P}$ and $Q=P$, using STP2, to find an ideal $K$
of $\widehat{P}$ such that $K\subseteq J$ and $p=\sup_{P}K$; hence
$p\in K\subseteq J$. Conversely, if $\{p\}$ is separated in $P$
and $p=\sup_{P}K$ for some ideal $K$ of $\widehat{P}$, then $K_{\downarrow}$
is easily seen to be an ideal of $P$, so $p\in K_{\downarrow}$ and
hence $p\in K$. 
\end{proof}
\begin{lem}
\label{lem:ideal chain}Let $M,P$ and $Q$ be posets or PO systems
such that $M\subseteq P\subseteq Q$ and $Q$ is countable, and such
that (a) for all $p\in Q$, we can find $m\in M$ such that $m\leqslant p$,
(b) whenever $m\in M$ and $p\in P$ with $m\leqslant p$, there is
an ideal $K$ of $M$ such that $m\in K$ and $p=\sup_{Q}K$. 

Suppose $q\in Q$ and $q=\sup_{Q}J$ for an ideal $J$ of $P$. Then
there is an ideal $K$ of $M$ such that $K\subseteq J$ and $q=\sup_{Q}K$. 
\end{lem}

\begin{proof}
Find $p_{1}\leqslant p_{2}\leqslant p_{3}\leqslant\cdots$, with $p_{n}\in J\subseteq P$
and $q=\sup_{Q}\{p_{n}\}$. We need to find an ideal $K$ of $M$
such that $q=\sup_{Q}K$. If $q\in P$, this follows from the assumptions,
so we may assume that $q\notin P$.

Enumerate $Q$ as $\{s_{j}\mid j\geqslant1\}$ and let $L_{n}=\{j\leqslant n\mid s_{j}\ngeqslant p_{n}\}$.
Choose $m_{1}\in M$ with $m_{1}\leqslant p_{1}$. Suppose we have
already chosen $m_{2},\ldots,m_{n-1}$ with $m_{n-1}\leqslant p_{n-1}$,
and choose an ideal $K_{n}$ of $M$ such that $m_{n-1}\in K_{n}$
and $p_{n}=\sup_{Q}K_{n}$. If $p_{n}\in M$, let $m_{n}=p_{n}$.
Otherwise, for each $j\in L_{n}$ we can find $r_{nj}\in K_{n}$ such
that $s_{j}\ngeqslant r_{nj}$; hence we can find $m_{n}\in K_{n}$
such that $m_{n-1}\leqslant m_{n}\leqslant p_{n}$ and $s_{j}\ngeqslant m_{n}$
for all $j\in L_{n}$. Let $K$ be the ideal of $M$ generated by
$\{m_{n}\}$, so that $K\subseteq J$; we claim that $q=\sup_{Q}K$.

Clearly $q\geqslant m_{n}$ for all $n$. Suppose $t\in Q$ and $t\ngeqslant q$;
say $t=s_{j}$. Then $t\ngeqslant p_{n}$ for some $n\geqslant j$,
so $j\in L_{n}$ and $t\ngeqslant m_{n}$. Hence $q=\sup_{Q}K$.
\end{proof}
\begin{defn}
Let $P$ and $Q$ be posets or PO systems, with $P\subseteq Q$. Let
$\mathscr{X}=\{X_{p}\mid p\in P\}^{*}$ and $\mathscr{Y}=\{Y_{q}\mid q\in Q\}^{*}$
be a $P$-partition and $Q$-partition respectively of the Stone space~$W$
of the Boolean ring~$R$, with type functions $T$ and $U$ respectively.
We say $\mathscr{Y}$ is a \emph{regular extension }of $\mathscr{X}$
if

(i) $X_{p}\subseteq Y_{p}$ for each $p\in P$; 

(ii) for all $A\in R$ and $q\in U(A)$, we can find $p\in T(A)$
such that $p\leqslant q$.

The \emph{clean interior }of $\mathscr{X}$, denoted $\mathscr{X}^{o}$,
is the partition $\{Z_{q}\mid q\in\widehat{P}\}^{*}$, where $Z_{q}=\{w\in X_{q}\mid w\text{ is a clean point\ensuremath{\}}}$
for $q\in\widehat{P}$.

$Q$ is \emph{chain-unique} \emph{over }$P$ if for any ideal $J$
of $P$ which has a supremum $q\in Q$, if $r\in P$ and $r\lneqq q$,
then $r\in J$.

$Q$ is \emph{$\omega$-complete over $P$ }if every ascending sequence
of elements of $P$ has a supremum in $Q$.
\end{defn}

\begin{rem}
If $Q$ is chain-unique over $P$, and $I$ and $J$ are ideals of
$P$ that share the same supremum $q\in Q$, then $I-\{q\}=J-\{q\}=\{r\in P\mid r\lneqq q\}$;
equivalently, for countable $P$, if $\{p_{n}\}$ and $\{q_{n}\}$
are strictly increasing sequences of elements of $P$ that share the
same supremum in $Q$, then $\{p_{n}\}$ and $\{q_{n}\}$ can be ``interleaved'':
i.e.\ for each $q_{n}$ there is an $m$ such that $q_{n}\leqslant p_{m}$,
and vice versa. 

For semi-trim partitions, the extension is regular iff $A\in\Trim_{p}(\mathscr{X})$
and $A\cap Y_{q}\neq\emptyset$ implies $q\geqslant t(A)$.
\end{rem}

\begin{example}
Not all extensions are regular. Let $P=\mathbb{N}_{+}$ and $Q=\mathbb{N}$,
as PO systems having the usual order, with $P^{d}=\emptyset$ and
$Q^{d}=\{0\}$. By Theorem~\ref{existence construction}, there is
a compact $\omega$-Stone space $W$ with a trim $Q$-partition $\mathscr{Y}=\{Y_{n}\mid n\in Q\}^{*}$
such that $Y_{0}=\{y_{0}\}$, say. Choose a $0$-trim $A_{1}$ such
that $A_{1}\nsupseteq Y_{1}$, and find a neighbourhood base $A_{1}\supseteq A_{2}\supseteq\cdots$
of $y_{0}$ of $0$-trim sets. Let $X_{n}=Y_{n}-A_{n}$ for $n\geqslant1$
and let $\mathscr{X}=\{X_{n}\mid n\in P\}^{*}$, which is a $P$-partition
as $X_{n}\subseteq\overline{X_{n+1}}$. Moreover, $\mathscr{X}$ is
a trim $P$-partition: as if $w\neq y_{0}$, then we can find $B\in\Trim_{n}(\mathscr{Y})$
with $w\in B$ and $B\cap A_{n}=\emptyset$ for some $n$, and for
$m\geqslant n$ $C\subseteq B$ is $m$-trim with respect to $\mathscr{Y}$
iff it is $m$-trim with respect to $\mathscr{X}$; while for $w=y_{0}$,
$A_{n}$ is $j_{n}$-trim with respect to $\mathscr{X}$, where $j_{n}=\min\{k\mid A_{n}\cap X_{k}\neq\emptyset\}\geqslant n+1$.
So $X_{n}\subseteq Y_{n}$ for all $n\geqslant1$; however $A_{n}$
is $0$-trim with respect to $\mathscr{Y}$ but $j_{n}$-trim with
respect to $\mathscr{X}$, so the extension is not regular.
\end{example}

\begin{lem}
\label{Chain unique lemma}Let $P$ be a poset or PO system and $\{X_{p}\mid p\in P\}^{*}$
a complete semi-trim $P$-partition of the $\omega$-Stone space $W$.
The following are equivalent:

(i) $P$ is chain-unique over $\widehat{P}$;

(ii) If $w\in X_{q}$ ($q\in P$), then $\{p\in\widehat{P}\mid p\lneqq q\}\subseteq I_{w}$.
\end{lem}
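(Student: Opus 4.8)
The plan is to prove the two implications separately; in each direction one realises an appropriate point of $W$ via the neighbourhood-base machinery of Proposition~\ref{countabletrimbase} and then reads off the order-theoretic statement. A preliminary observation used throughout: if $B_{1}\supseteq B_{2}\supseteq\cdots$ is a neighbourhood base of trim sets for a point $w$ and we set $q_{n}=t(B_{n})$, then monotonicity of $T$ forces $q_{1}\leqslant q_{2}\leqslant\cdots$ with each $q_{n}\in\widehat{P}$, and by Proposition~\ref{countabletrimbase}(i)(c) the point $w$ lies in $X_{q}$ precisely when $\sup_{n}q_{n}=q$.

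For (ii)$\Rightarrow$(i): let $p_{1}\leqslant p_{2}\leqslant\cdots$ be an ascending chain in $\widehat{P}$ with supremum $q\in P$, and let $r\in\widehat{P}$ with $r<q$; we must find $n$ with $r\leqslant p_{n}$. Choose a $p_{1}$-trim set $B$ and apply Proposition~\ref{countabletrimbase}(ii) to obtain $w\in W$ with a trim neighbourhood base $B=B_{1}\supseteq B_{2}\supseteq\cdots$ satisfying $t(B_{n})=p_{n}$; then $w\in X_{q}$ by the preliminary remark. Hypothesis (ii), applied to this $w$ with $r<q$ and $r\in\widehat{P}$, yields an $r$-trim neighbourhood $A$ of $w$; since $\{B_{n}\}$ is a neighbourhood base, $B_{n}\subseteq A$ for some $n$, so $T(B_{n})\subseteq T(A)$ and hence $p_{n}=t(B_{n})\geqslant t(A)=r$, as required.

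For (i)$\Rightarrow$(ii): fix $x\in X_{q}$, $p<q$, $p\in\widehat{P}$. By Proposition~\ref{countabletrimbase}(i)(b), $x$ has a trim neighbourhood base $B_{1}\supseteq B_{2}\supseteq\cdots$; put $q_{n}=t(B_{n})$, so that $q_{1}\leqslant q_{2}\leqslant\cdots$ is an ascending chain in $\widehat{P}$ with $\sup_{n}q_{n}=q$. Chain-uniqueness of $P$ over $\widehat{P}$, applied to this chain and to $r=p$ (note $p<q$ and $p\in\widehat{P}$), produces $n$ with $p\leqslant q_{n}$. Now pick a $p$-trim set $C$ (available since $p\in\widehat{P}$) and set $C'=C\cup B_{n}$. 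Since $q_{n}\geqslant p$ we have $T(B_{n})=\{s\mid s\geqslant q_{n}\}\subseteq\{s\mid s\geqslant p\}=T(C)$, so $T(C')=\{s\mid s\geqslant p\}$ and $C'$ is $p$-trim; as $C'$ is compact and open (both $C$ and $B_{n}$ lie in the underlying Boolean ring) and $C'\supseteq B_{n}\ni x$, it is the required $p$-trim neighbourhood of $x$.

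The only point needing care — and the crux of the argument — is the passage from the upward-directed family of types of trim neighbourhoods of a point to a genuine ascending chain with the right supremum; this is exactly what Proposition~\ref{countabletrimbase}(i)(b) and (c) deliver, after which chain-uniqueness hands us a trim neighbourhood $B_{n}$ of type $\geqslant p$, and the union trick $C\cup B_{n}$ converts it into a $p$-trim neighbourhood. It is worth flagging in the write-up that a ``$p$-trim neighbourhood'' need not be small, which is precisely what makes $C\cup B_{n}$ admissible even when $C$ lies far from $x$.
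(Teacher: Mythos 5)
Your direction (i)$\Rightarrow$(ii) is correct and is essentially the paper's argument, slightly streamlined: where the paper treats clean points and limit points separately, you handle both at once via Proposition~\ref{countabletrimbase}(i)(b),(c) (legitimately, since there you start from $x\in X_{q}$, which is exactly the hypothesis of (i)(c)), and you then finish with the same union trick $C\cup B_{n}$. No complaint there.

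The problem is in (ii)$\Rightarrow$(i), at the step ``then $w\in X_{q}$ by the preliminary remark''. Your preliminary observation reads Proposition~\ref{countabletrimbase}(i)(c) as a biconditional (``$w$ lies in $X_{q}$ precisely when $\sup_{n}q_{n}=q$''), but (i)(c) is one-directional: it \emph{presupposes} $w\in X_{p}$ and then identifies $p$ as the supremum of the types; it does not place $w$ anywhere in the partition. A semi-trim $P$-partition need not be complete, so the point $w$ produced by Proposition~\ref{countabletrimbase}(ii) may lie in $W-X$ -- this is exactly what happens for a non-complete trim partition, where every point whose trim-neighbourhood types fail to stabilise lies outside $X$ -- and then hypothesis (ii) says nothing about $w$ and your argument halts. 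Fullness does not rescue the step, since it only applies when the types are eventually constant, which is the trivial case $r<q=q_{n}$; and the case that matters is precisely the one where the chain does not attain its supremum. To be fair, the paper's own proof makes the same assertion ``so $w\in X_{q}$'' without comment; it is sound when the partition is complete (then $w\in X_{q'}$ for some $q'$, and semi-trim condition (iii) forces $q'=q$), which is the setting in which the lemma is later applied, but as written your appeal to (i)(c) does not justify it, and the point is not cosmetic: for a trim (hence semi-trim) partition condition (ii) holds automatically for every $P$ -- all points of $X$ are clean, and your own union trick gives each of them $p$-trim neighbourhoods for every $p\in\widehat{P}$ below its label -- so without some completeness-type input the passage from (ii) to chain-uniqueness cannot work. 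You should either add the completeness argument explicitly where it is available, or flag that this direction uses more than Proposition~\ref{countabletrimbase}(i)(c) provides.
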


\begin{proof}
If $P$ is chain-unique over $\widehat{P}$ and $w\in X_{q}$, $p\lneqq q$
and $p\in\widehat{P}$, then $I_{w}$ is an ideal of $\widehat{P}$
with $\sup_{P}I_{w}=q$ (by STP1A), so $p\in I_{w}$. 

Now suppose (ii) holds, that $J$ is an ideal of $\widehat{P}$ with
supremum $q\in P$, and that $p\in\widehat{P}$, with $p\lneqq q$.
By Proposition~\ref{Prop: Ideals}(i), we can find $w\in W$ such
that $I_{w}=J$. By completeness and STP1A, we have $w\in X_{q}$,
and now by (ii), $p\in I_{w}=J$, as required.
\end{proof}
\begin{rem}
Property (ii) above is a form of homogeneity, saying that for any
$q\in P$ all limit points in $X_{q}$ have the same types of trim
neighbourhoods (this is always true for clean points, irrespective
of $P$).
\end{rem}

\begin{lem}
\label{Regular extension lemma}Let $P$ and $Q$ be posets or PO
systems such that $P\subseteq Q$, and let $\mathscr{X}$ be a semi-trim
$P$-partition and $\mathscr{Y}$ a $Q$--partition of the Stone
space $W$ such that $\mathscr{Y}$ is a regular extension of $\mathscr{X}$. 

Then $\mathscr{Y}$ is a semi-trim $Q$-partition of $W$, $\mathscr{X}$
and $\mathscr{Y}$ give rise to the same trim sets, $\widehat{Q}=\widehat{P}$,
and if $P$ and $Q$ are PO systems then also $Q^{d}=P^{d}$.
\end{lem}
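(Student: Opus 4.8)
The plan is to work inside the Boolean ring $R$ of compact open subsets of $W$, comparing the type functions $T_{\mathscr{X}}$ and $T_{\mathscr{Y}}$ of the two partitions. There are two halves: that every $\mathscr{X}$-trim set is $\mathscr{Y}$-trim with the same label (for which condition~(ii) of the definition of a regular extension is exactly what is needed), and the converse (for which I will use the decomposition of Lemma~\ref{Propn std split of trim set} together with an order argument). From the two halves the equality of trim sets and the equality $\widehat{P}=\widehat{Q}$ both fall out.

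First I would note that $X_{p}\subseteq Y_{p}$ yields $T_{\mathscr{X}}(A)\subseteq T_{\mathscr{Y}}(A)$ for every $A\in R$. Then, given $A\in R$ that is $p$-trim for $\mathscr{X}$, I would argue: $A\cap X_{p}\neq0$ forces $A\cap Y_{p}\neq0$, so $p\in T_{\mathscr{Y}}(A)$; since $T_{\mathscr{Y}}(A)$ is an upper set of $Q$ (Proposition~\ref{semi-trim closure}(ii)(a) applied to $\mathscr{Y}$), $\{s\in Q\mid s\geqslant p\}\subseteq T_{\mathscr{Y}}(A)$; and applying condition~(ii) of a regular extension to each $q\in T_{\mathscr{Y}}(A)$ gives the reverse inclusion. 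Hence $A$ is $p$-trim for $\mathscr{Y}$, and in particular $\widehat{P}\subseteq\widehat{Q}$.

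For the converse, take $A\in R$ that is $q$-trim for $\mathscr{Y}$, with $q\in Q$ a priori. Since $X$ is dense, $T_{\mathscr{X}}(A)\neq0$; let $F:=T_{\mathscr{X}}(A)_{\min}$, which is a finite antichain and, as $A$ is compact, a finite foundation of $T_{\mathscr{X}}(A)$. I would invoke Lemma~\ref{Propn std split of trim set}(i) to write $A=\bigsqcup_{p\in F}C_{p}$ with each $C_{p}$ being $p$-trim for $\mathscr{X}$, hence $p$-trim for $\mathscr{Y}$ by the easy half; taking types then gives $\{s\in Q\mid s\geqslant q\}=T_{\mathscr{Y}}(A)=\bigcup_{p\in F}\{s\in Q\mid s\geqslant p\}$. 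Since $q$ belongs to the right-hand side, $q\geqslant p_{0}$ for some $p_{0}\in F$; but $p_{0}\in T_{\mathscr{X}}(A)\subseteq T_{\mathscr{Y}}(A)$ forces $p_{0}\geqslant q$, so $p_{0}=q\in P$; and since every element of $F$ lies in $T_{\mathscr{Y}}(A)$ and so is $\geqslant q$, the antichain $F$ must equal $\{q\}$. Then $T_{\mathscr{X}}(A)$ is an upper set of $P$ containing $q$ and having $\{q\}$ as a foundation, hence equals $\{s\in P\mid s\geqslant q\}$, so $A$ is $q$-trim for $\mathscr{X}$; thus $\widehat{Q}\subseteq\widehat{P}$. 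Putting this together with the easy half, $\mathscr{X}$ and $\mathscr{Y}$ give rise to the same trim sets, with matching labels, and $\widehat{P}=\widehat{Q}$.

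I expect the converse to be the only point requiring care: it is not a priori clear that refining $\mathscr{X}$ does not split a $\mathscr{Y}$-trim set, i.e.\ that $T_{\mathscr{X}}(A)_{\min}$ is a singleton, and it is exactly the interplay of Lemma~\ref{Propn std split of trim set} with condition~(ii) of a regular extension that forces this. Everything else is bookkeeping with the type functions.
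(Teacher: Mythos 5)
Your proposal is correct and takes essentially the same route as the paper: the first half (using $X_{p}\subseteq Y_{p}$, the upper-set property of types from Proposition~\ref{semi-trim closure}(ii)(a), and regularity condition~(ii)) is identical, and both proofs reduce the converse to a finite decomposition of $A$ into $\mathscr{X}$-trim pieces. The only difference is organizational: the paper picks a point of $A\cap Y_{q}$, takes a small $\mathscr{X}$-trim neighbourhood $B\subseteq A$ of it, and applies regularity directly to $B$ to force $q=t(B)\in P$, whereas you channel regularity entirely through the easy half applied to the pieces supplied by Lemma~\ref{Propn std split of trim set}(i) and then pin down $q$ by the antichain/foundation bookkeeping on $F=T_{\mathscr{X}}(A)_{\min}$ -- both are sound and rest on the same ingredients.
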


\begin{proof}
Let $R$ be the Boolean ring underlying $W$, and let $T:R\rightarrow2^{P}$
and $U:R\rightarrow2^{Q}$ be the type functions of $\mathscr{X}$
and $\mathscr{Y}$ respectively. Suppose $A\in R$ and let $F=T(A)_{\min}$,
which is a finite foundation for $T(A)$ (Proposition~\ref{Compact means FF}(i)).
Then by regularity, $F$ is also a finite foundation for $U(A)$,
so $U(A)_{\min}=T(A)_{\min}$, and $U(A)\cap P=T(A)$ as $U(A)$ and
$T(A)$ are upper subsets of $Q$ and $P$ respectively (Proposition~\ref{basic properties}(i)). 

If $P$ and $Q$ are posets, then $A\in\Trim_{p}(\mathscr{X})$ iff
$T(A)_{\min}=\{p\}$, and similarly for $\mathscr{Y}$; hence $\widehat{Q}=\widehat{P}$
and $\Trim_{p}(\mathscr{Y})=\Trim_{p}(\mathscr{X})$ for all $p\in\widehat{P}$. 

Suppose instead that $P$ and $Q$ are PO systems. If $q\in Q^{d}$
and $A\in\Trim_{q}(\mathscr{Y})$, then $q\in T(A)\cap\widehat{P}$
and so $A\cap X_{q}$ must be a singleton as $X_{q}\subseteq Y_{q}$;
hence $q\in P^{d}$ and $A\in\Trim_{q}(\mathscr{X})$. Conversely,
if $p\in P^{d}$ and $A\in\Trim_{p}(\mathscr{X})$, with $A\cap X_{p}=\{x\}$,
say, then we must also have $A\cap Y_{p}=\{x\}$, as otherwise we
could find $B\subseteq A-\{x\}$ such that $p\in U(B)$ but $p\notin T(B)$,
which is impossible. Hence $P^{d}=Q^{d}$ and $\Trim_{p}(\mathscr{Y})=\Trim_{p}(\mathscr{X})$
for all $p\in\widehat{P}$.

It remains to check that $\mathscr{Y}$ is a semi-trim $Q$-partition
of $W$. Semi-trim properties ST1 and ST2 are immediate, as $X_{p}\subseteq Y_{p}$
and $\mathscr{X}$ is semi-trim. For ST3, suppose $A\in R$ and $p\in\widehat{P}\cap U(A)$;
we must find a clean point in $A\cap Y_{p}$. But $p\in T(A)$, so
as $\mathscr{X}$ is semi-trim, we can find $x\in B\subseteq A$ such
that $B\in\Trim_{p}(\mathscr{X})$ and $x\in X_{p}\subseteq Y_{p}$,
as required. 
\end{proof}
\begin{prop}
\label{Clean interior propn}Let $P$ be a poset or PO system, and
let $\mathscr{X}$ be a semi-trim $P$-partition of the Stone space~$W$.
Then $\mathscr{X}$is a regular extension of $\mathscr{X}^{o}$, and
$\mathscr{X}^{o}$ is a trim $\widehat{P}$-partition of $W$ which
gives rise to the same trim sets as $\mathscr{X}$. 
\end{prop}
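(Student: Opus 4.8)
The plan is to verify the three assertions in turn, relying on Lemma~\ref{Regular extension lemma} to collapse most of the work once regularity is established. First I would check that $\mathscr{X}^{o}=\{Z_{q}\mid q\in\widehat{P}\}^{*}$ is indeed a $\widehat{P}$-partition of $W$: the underlying set is $Z=\bigcup_{q\in\widehat{P}}Z_{q}$, the set of all clean points of $\mathscr{X}$, and by definition each $Z_{q}\neq 0$ exactly when $q\in\widehat{P}$ (a point of $X_{q}$ is clean iff it has a $q$-trim neighbourhood, which requires $q\in\widehat{P}$, and conversely by semi-trimness property~(ii) clean points are dense in $X_{q}$ when $q\in\widehat{P}$). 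So $\mathscr{X}^{o}$ is a genuine $\widehat{P}$-partition, with $W-Z$ the "extra" part.

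Next I would establish that $\mathscr{X}$ is a regular extension of $\mathscr{X}^{o}$, checking the two clauses of the definition. Clause~(i), $Z_{q}\subseteq X_{q}$ for each $q\in\widehat{P}$, is immediate. For clause~(ii): suppose $A\in R$ is $q$-trim with respect to $\mathscr{X}^{o}$ (i.e.\ $A\cap Z_{r}\neq 0$ iff $r\geqslant q$ in $\widehat{P}$) and $A\cap X_{p}\neq 0$ for some $p\in P$; I must show $p\geqslant q$. The point here is to identify $t_{\mathscr{X}}(A)$, the type of $A$ with respect to $\mathscr{X}$. Take $x\in A\cap X_q$-related clean point: since $A$ is $q$-trim for $\mathscr{X}^{o}$, there is a clean point $w\in A\cap Z_{q}$, so $w\in X_{q}$ has a $q$-trim $\mathscr{X}$-neighbourhood $B\subseteq A$; then $A$ contains a $q$-trim set, so $q\in T_{\mathscr{X}}(A)$ and hence (as $T_{\mathscr{X}}(A)$ is an upper set) $T_{\mathscr{X}}(A)\supseteq\{r\in P\mid r\geqslant q\}$. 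For the reverse inclusion, if $p\in T_{\mathscr{X}}(A)$, pick $y\in A\cap X_{p}$; by semi-trimness property~(ii) applied to $A$ (since $p\in T_{\mathscr{X}}(A)\cap\widehat{P}$ whenever $X_p$ has a clean point — and I'd note $X_p$ contributes a clean point in $A$ via property~(ii) of semi-trimness, so $p\in\widehat P$), $A$ contains a $p$-trim set, so it contains a clean point of $X_{p}$, i.e.\ $A\cap Z_{p}\neq 0$, whence $p\geqslant q$ because $A$ is $q$-trim for $\mathscr{X}^{o}$. This gives $T_{\mathscr{X}}(A)=\{r\in P\mid r\geqslant q\}$, so $A$ is $q$-trim for $\mathscr{X}$, and in particular $A\cap X_{p}\neq 0\Rightarrow p\geqslant q$, which is clause~(ii). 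The subtle point I expect to need care with is ensuring that every $p\in T_{\mathscr X}(A)$ forces $A$ to meet $Z_p$ (not just $X_p$): this is exactly semi-trimness property~(ii) together with the fact that a $p$-trim set meets $Z_p$, so $p\in\widehat P$ automatically. This argument, run in both directions, is the heart of the proof.

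Finally, with regularity in hand, Lemma~\ref{Regular extension lemma} applied to $P\supseteq\widehat{P}$, $\mathscr{Y}=\mathscr{X}$ and "$\mathscr{X}$" there equal to $\mathscr{X}^{o}$ immediately yields that $\mathscr{X}$ and $\mathscr{X}^{o}$ give rise to the same trim sets and that $\widehat{\widehat{P}}=\widehat{P}$; the latter says that with respect to $\mathscr{X}^{o}$ every $q\in\widehat{P}$ has a $q$-trim set, so $\mathscr{X}^{o}$ realises its whole index poset. It then remains to see $\mathscr{X}^{o}$ is a \emph{trim} $\widehat{P}$-partition: by Proposition~\ref{semi-trim closure}(iii) it suffices that every point of $W$ has a neighbourhood base of trim sets, that every point of $Z$ is clean, and that $\mathscr{X}^{o}$ is full. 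Density of trim sets is inherited from $\mathscr{X}$ since the trim sets coincide. Every point of $Z_{q}$ is a clean point of $\mathscr{X}$, hence has a $q$-trim $\mathscr{X}$-neighbourhood, which is a $q$-trim $\mathscr{X}^{o}$-neighbourhood by the coincidence of trim sets, so it is a clean point of $\mathscr{X}^{o}$. For fullness: if $w\in W$ has a neighbourhood base of $q$-trim sets (w.r.t.\ $\mathscr{X}^{o}$, equivalently $\mathscr{X}$) for some $q\in\widehat{P}$, then by Proposition~\ref{semi-trim closure}(ii)(b) applied to the semi-trim partition $\mathscr{X}$ we get $w\in X_{q}$ and $w$ is a clean point of $\mathscr{X}$, so $w\in Z_{q}$, as needed. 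This completes the proof.
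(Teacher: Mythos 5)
Your overall architecture matches the paper's (verify $\mathscr{X}^{o}$ is a partition, check the two clauses of regularity, then use Lemma~\ref{Regular extension lemma} and Proposition~\ref{semi-trim closure}(iii)), but the key step, regularity clause (ii), has a genuine gap. You need: if $A$ is $q$-trim with respect to $\mathscr{X}^{o}$ and $p\in T_{\mathscr{X}}(A)$, then $p\geqslant q$. Your argument forces $A$ to meet $Z_{p}$ via semi-trimness property (ii), and you assert that ``$p\in\widehat{P}$ automatically''; but property (ii) has $p\in\widehat{P}$ as a hypothesis, so this is circular, and it is simply false that every $p\in T_{\mathscr{X}}(A)$ lies in $\widehat{P}$: for a complete semi-trim partition, a class $X_{p}$ with $p$ the supremum of a chain in $\widehat{P}$ can consist entirely of limit points, so $p\notin\widehat{P}$ and $Z_{p}$ does not exist, yet many sets --- including $\mathscr{X}^{o}$-trim ones --- meet $X_{p}$. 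The correct treatment of this case (and it is exactly what the paper does) is: choose $y\in A\cap X_{p}$; since $\mathscr{X}$ is semi-trim, $y$ has an $\mathscr{X}$-trim neighbourhood $B\subseteq A$, say $r$-trim, so $r\leqslant p$ and $r\in\widehat{P}$; every point of $B\cap X_{r}$ is clean, so $A\cap Z_{r}\neq0$, whence $r\geqslant q$ by $q$-trimness of $A$ for $\mathscr{X}^{o}$, and therefore $p\geqslant r\geqslant q$. Your argument works verbatim only when $p\in\widehat{P}$.

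A secondary, structural point: you invoke Lemma~\ref{Regular extension lemma} before establishing that $\mathscr{X}^{o}$ is a (semi-)trim partition, while your subsequent verification of trimness leans on the lemma's conclusion that the trim sets coincide. The lemma's hypotheses (and its proof, which decomposes sets into trim pieces of the smaller partition and uses trim neighbourhood bases for it) require the smaller partition to be semi-trim, so as written the logical order is circular. This is repairable: the paper proves trimness of $\mathscr{X}^{o}$ first, directly from semi-trimness of $\mathscr{X}$ (an $\mathscr{X}$-$p$-trim set is $\mathscr{X}^{o}$-$p$-trim, clean points stay clean, and fullness of $\mathscr{X}^{o}$ follows from fullness of $\mathscr{X}$), and only then proves regularity and applies the lemma; alternatively, your type computation, once the gap above is patched, already yields that the trim sets coincide without any appeal to the lemma. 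Either way, reorganise so that each cited result has its hypotheses in hand at the point of use.
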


\begin{proof}
Let $R$ be the Boolean ring corresponding to $W$, let $\mathscr{X}=\{X_{p}\mid p\in P\}^{*}$,
and let $\mathscr{X}^{o}=\{Y_{q}\mid q\in\widehat{P}\}^{*}$ be the
clean interior of $\mathscr{X}$, noting that each $Y_{q}\neq\emptyset$.
For $q\in\widehat{P}$, $Y_{q}$ is dense in $X_{q}$ as $\mathscr{X}$
is semi-trim, and so $Y_{q}^{\prime}=X_{q}^{\prime}$ and $\overline{Y_{q}}=\overline{X_{q}}$,
from which it follows immediately that $\mathscr{X}^{o}$ is a $\widehat{P}$-partition.
Let $T:R\rightarrow2^{P}$ be the type function with respect to $\mathscr{X}$. 

If $A\in\Trim_{p}(\mathscr{X})$ then $A\in\Trim_{p}(\mathscr{X}^{o})$,
as $Y_{q}$ is dense in $X_{q}$ for $q\in\widehat{P}$, and if $P$
is a PO system and $p\in P^{d}$ then $|A\cap Y_{p}|=|A\cap X_{p}|=1$.

Conversely, suppose that $A\in\Trim_{p}(\mathscr{X}^{o})$. If $w\in A\cap X_{q}$,
find $B\in\Trim(\mathscr{X})$ such that $w\in B\subseteq A$; then
$B\cap X_{t(B)}\subseteq Y_{t(B)}$, so $q\geqslant t(B)\geqslant p$.
Hence $A\in\Trim_{p}(\mathscr{X})$, as $p\in T(A)$ and if also $P$
is a PO system and $p\in P^{d}\subseteq\widehat{P}$, then $X_{p}=Y_{p}$
as all points in $X_{p}$ are clean (STP5), and so $|A\cap X_{p}|=|A\cap Y_{p}|=1$.

Hence for $p\in\widehat{P}$, $\Trim_{p}(\mathscr{X})=\Trim_{p}(\mathscr{X}^{o})$
and all points in $Y_{p}$ are clean with respect to $\mathscr{X}^{o}$.
If now $w\in W$ has a neighbourhood base of sets in $\Trim_{q}(\mathscr{X}^{o})=\Trim_{q}(\mathscr{X})$
for $q\in\widehat{P}$, then $w\in X_{q}$ as $\mathscr{X}$ is full;
so $w$ is a clean point and $w\in Y_{q}$; thus $\mathscr{X}^{o}$
is full and so is a trim $\widehat{P}$-partition of $W$. If finally
$A\in R$ and $q\in T(A)$, then we can find $p\leqslant q$ with
$p\in\widehat{P}\cap T(A)$, so $A\cap Y_{p}\neq\emptyset$ and the
extension is regular, as required.
\end{proof}
We can now give necessary and sufficient conditions for when one semi-trim
partition of an $\omega$-Stone space can be regularly extended to
another semi-trim partition, which may or may not be complete.
\begin{thm}
\label{Completion thm}Let $P$ and $Q$ be posets or PO systems with
$P\subseteq Q$, and with $Q^{d}=P^{d}$ if $P,Q$ are PO systems.
Let $\mathscr{X}$ be a semi-trim $P$-partition of the $\omega$-Stone
space $W$. Then $\mathscr{X}$ can be regularly extended to a semi-trim
$Q$-partition of $W$ iff $Q$ satisfies the following 2 conditions:

(i) for each $q\in Q$, we can find $p\in\widehat{P}$ with $p\leqslant q$; 

(ii) if $p\in\widehat{P}$ and $q\in Q$ with $p\lneqq q$, then there
is an ideal $J$ of $\widehat{P}$ such that $p\in J$ and $q=\sup_{Q}J$.

Moreover, $\mathscr{X}$ can be regularly extended to a complete semi-trim
$Q$-partition $\mathscr{Y}$ of $W$ iff $Q$ satisfies the additional
conditions:

(iii) $Q$ is $\omega$-complete over $\widehat{P}$, and

(iv) $P^{d}$ is $\widehat{P}$-separated in $Q$, if $P$ is a PO
system.

In this case, $\mathscr{Y}$ will be a strongly semi-trim $Q$-partition
iff in addition $\mathscr{X}$ is a strongly semi-trim $P$-partition
and $\widehat{P}$ is $\widehat{P}$-separated in $Q$.
\end{thm}

\begin{proof}
Let $R$ be the Boolean ring corresponding to $W$, and $\mathscr{X}=\{X_{p}\mid p\in P\}^{*}$.
We note that $\widehat{P}$ is countable (STP6).

\textbf{Suppose first }that $Q$ satisfies conditions (i) and (ii).
We must regularly extend $\mathscr{X}$ to a semi-trim $Q$-partition
of $W$. Let $M$ be a subset of $\widehat{P}$, with $M\supseteq P^{d}$
if $P$ is a PO system; the purpose of $M$ is to avoid creating ``unwanted''
limit points in the expanded partition. In this proof, $V_{w}$ will
denote $\{A\in\Trim(\mathscr{X})\mid w\in A\}$.

For $w\in W$, let $\upsilon(w)=\sup_{Q}I_{w}$ if this supremum exists,
so $\upsilon(w)=p$ if $w\in X_{p}$ for $p\in P$ (by~STP1A). Let
$Y_{q}=\{w\in W\mid\upsilon(w)=q\}$ for $q\notin M$, and let $Y_{q}=X_{q}$
for $q\in M$: this gives a partition $\mathscr{Y}=\{Y_{q}\mid q\in Q\}^{*}$
of $W$, subject to confirming that each $Y_{q}\neq\emptyset$, with
$X_{p}\subseteq Y_{p}$ for $p\in P$. 

We first show (a) if $B\in\Trim_{p}(\mathscr{X})$ and $q\geqslant p$,
then $B\cap Y_{q}\neq\emptyset$; (b) $\mathscr{Y}$ is a $Q$-partition
of $W$. It follows from (a) that $B\cap Y_{q}\neq\emptyset$ for
all $q\in Q$ (find $p\in\widehat{P}$ with $p\leqslant q$ and choose
$B\in\Trim_{p}(\mathscr{X})$).

For (a), if $q\in P$, then $B\cap Y_{q}\supseteq B\cap X_{q}\neq\emptyset$
as $B$ is $p$-trim. If $q\notin P$, use (ii) to find an ideal $J$
of $\widehat{P}$ with $p\in J$ and $q=\sup_{Q}J$. Choose $w\in B$
such that $I_{w}=J$ (Proposition~\ref{Prop: Ideals}(i)). Then $\upsilon(w)=q\notin M$
and so $B\cap Y_{q}\neq\emptyset$.

For (b), choose $p\in Q$. If $q\in Q$, $q\leqslant p$, and $w\in Y_{q}$,
then for all $A\in V_{w}$ we have $t(A)\leqslant q\leqslant p$ (definition
of $Y_{q}$), so $A\cap Y_{p}\neq\emptyset$ by (a); hence $w\in\overline{Y_{p}}$
and $Y_{q}\subseteq\overline{Y_{p}}$. If $q\nleqslant p$ $(q\in Q$)
and $w\in Y_{q}$, then we can find $A\in V_{w}$ such that $t(A)\nleqslant p$,
as $q=\sup_{Q}I_{w}$; so $A\cap Y_{p}=\emptyset$ (definition of
$Y_{p}$) and $x\notin\overline{Y_{p}}$. 

If $P$ and $Q$ are PO systems, we must also show that $Y_{q}$ is
discrete if $q\in Q^{d}$ and otherwise has no isolated points. If
$q\in Q^{d}=P^{d}$, then $Y_{q}=X_{q}$ as $P^{d}\subseteq M$, so
$Y_{q}$ is discrete. Suppose $q\notin P^{d}$ and $y\in Y_{q}$.
Pick $A\in V_{y}$, so that $p\leqslant q$ where $p=t(A)\in P$,
and pick a clean point $x\in A\cap X_{p}$; we can ensure $x\neq y$,
as if $q=p$ then $X_{q}$ has no isolated points. Now choose $B\in V_{x}$
such that $B\subseteq A-\{y\}$; then $B\in\Trim_{p}(\mathscr{X})$
and so by (a) $B\cap Y_{q}\neq\emptyset$. Hence $y$ is not isolated
in $Y_{q}$, and $\mathscr{Y}$ is a $Q$-partition of $W$.

So by Lemma~\ref{Regular extension lemma}, $\mathscr{Y}$ is a semi-trim
$Q$-partition of $W$, $\mathscr{Y}$ and $\mathscr{X}$ give rise
to the same trim sets, and by definition of $\upsilon(w)$ $\mathscr{Y}$
regularly extends $\mathscr{X}$.

We note that $\mathscr{X}$ and $\mathscr{Y}$ have the same clean
points. For suppose $p\in\widehat{P}$ and $y\in Y_{p}$ is a clean
point in $\mathscr{Y}$. Then $y$ has a neighbourhood base of sets
in $\Trim_{p}(\mathscr{Y})=\Trim_{p}(\mathscr{X})$, and so $y\in X_{p}$
as $\mathscr{X}$ is full. Hence also all points in $Y_{p}-X_{p}$
are limit points (in $\mathscr{X}$ or $\mathscr{Y}$) for $p\in\widehat{P}$.

\textbf{Now suppose }that $\mathscr{X}$ can be regularly extended
to a semi-trim $Q$-partition $\mathscr{Y}$ of~$W$. By Lemma~\ref{Regular extension lemma},
$\mathscr{X}$ and $\mathscr{Y}$ give rise to the same trim sets
and $\widehat{Q}=\widehat{P}$. Moreover, $Q$ satisfies properties
(i) and (ii) by STP2.

\textbf{Next }if $Q$ satisfies properties (iii) and (iv), then $\upsilon(w)$
is defined for all $w\in W$ (Remark~\ref{rem:Ideals}). If $P$
is a PO system, choose $M=P^{d}$ in the construction; then for $p\in P^{d}$,
$\upsilon(w)=p$ iff $w\in X_{p}$, so $\mathscr{Y}$ is complete.
Conversely, if $\mathscr{Y}$ is complete and $p_{1}\leqslant p_{2}\leqslant p_{3}\leqslant\cdots$
is an ascending sequence in $\widehat{P}$, let $J=\{q\in P\mid q\leqslant p_{n}\text{ for some }n\}$
and use Proposition~\ref{Prop: Ideals}(i) to find $w\in W$ such
that $I_{w}=J$. Then if $w\in Y_{q}$, we have $q=\sup_{Q}I_{w}=\sup_{Q}\{p_{n}\mid n\geqslant1\}$
by STP1A. Hence $Q$ satisfies property (iii), and property (iv)
follows from Proposition~\ref{Prop: Ideals}(ii), as required.

Finally, if $\mathscr{Y}$ is strongly semi-trim, then so is $\mathscr{X}$,
and $Y_{p}=X_{p}$ for all $p\in\widehat{P}$, as $\mathscr{X}$ and
$\mathscr{Y}$ have the same clean points by the note above. So if
$p\in\widehat{P}$ and $p=\sup_{Q}J$ for some ideal $J$ of $\widehat{P}$,
find $w\in W$ such that $I_{w}=J$; then $w\in Y_{p}=X_{p}$, and
so $p\in J$ as $\mathscr{X}$ is strongly semi-trim. Conversely,
if $\mathscr{X}$ is strongly semi-trim and $\widehat{P}$ is $\widehat{P}$-separated
in $Q$ then let $M=\widehat{P}$ in the construction; for $p\in\widehat{P}$,
$\upsilon(w)=p$ iff $w\in X_{p}$, so $\mathscr{Y}$ is complete
with $X_{p}=Y_{p}$ for $p\in\widehat{P}$, and all points in $Y_{p}$
are clean.
\end{proof}
\begin{cor}
\label{Completion corollary 2}Let $P$ be a poset or PO system and
let $\mathscr{X}$ be a semi-trim $P$-partition of the $\omega$-Stone
space $W$. 

(i) $\mathscr{X}$ can be regularly extended to a complete semi-trim
$P$-partition of $W$ iff (a) $P$ is $\omega$-complete over $\widehat{P}$
and (b) $P^{d}$ is $\widehat{P}$-separated in $P$ if $P$ is a
PO system;

(ii) $\mathscr{X}$ can be regularly extended to a complete strongly
semi-trim $P$-partition of $W$ iff also $\mathscr{X}$ is strongly
semi-trim and $\widehat{P}$ is $\widehat{P}$-separated in $P$.
\end{cor}

\begin{proof}
(i) $\Leftarrow$ By Theorem~\ref{Completion thm} (taking $Q=P$)
$\mathscr{X}$ can be regularly extended to a complete semi-trim $P$-partition
of $W$, as conditions (i) and (ii) of that Theorem follow from semi-trimness
and STP2. 

$\Rightarrow$ Is immediate from Theorem~\ref{Completion thm}, taking
$Q=P$.

(ii) Immediate from Theorem~\ref{Completion thm}.
\end{proof}
\begin{rem}
The restriction for PO systems in the case of (ii) that $\widehat{P}$
be $\widehat{P}$-separated in $P$ is onerous, as all elements of
$P-\widehat{P}$ and no elements of $\widehat{P}$ must be the supremum
of some ideal of $\widehat{P}$.

If the trim $P$-partition $\mathscr{X}$ of an $\omega$-Stone space
$W$ can be regularly extended to a complete trim $P$-partition of
$W$, then $P$ has the ACC by Theorem~\ref{Trim nec condition},
so $\mathscr{X}$ is already a complete trim partition of $W$ (STP3).
\end{rem}

\subsection{The chain completion of a semi-trim partition}

We recall (Markowksy~\cite[Corollary~to~Theorem~6]{Markowsky}) that
we can extend any poset $P$ to its \emph{chain completion }$\overline{P}$
which has (inter alia) the following properties in the case when $P$
is countable; we are only considering non-empty chains here, so that
the chain completion $\overline{P}$ need not have a least element
corresponding to the supremum of the empty set:
\begin{lyxlist}{00.00.0000}
\item [{C1}] Every ascending sequence in $\overline{P}$ has a unique least
upper bound (supremum): i.e.\ $\overline{P}$ is $\omega$-complete
\item [{C2}] Every element of $\overline{P}$ is the supremum of an ideal
of $P$ (equivalently, of an ascending sequence $p_{1}\leqslant p_{2}\leqslant\cdots$,
with $p_{n}\in P$ for all $n$): i.e.\ $P$ is dense in $\overline{P}$
\item [{C3}] Completion preserves suprema: if $J$ is an ideal of $P$,
$q\in P$ and $q=\sup_{P}J$, then $q=\sup_{\overline{P}}J$
\item [{C4}] If $P$ is $\omega$-complete, then $\overline{P}\cong P$
\item [{C5}] If $q\in\overline{P}-P$ and $q=\sup_{\overline{P}}J$ for
some ideal $J$ of $P$, then $J=\{r\in P\mid r\lneqq q\}$.
\end{lyxlist}
Properties C1 to C4 are immediate from~\cite{Markowsky}. When $P$
is countable, it can be readily verified from the construction in~\cite{Markowsky}
that the elements of $\overline{P}$, viewed as subsets of $2^{P}$,
are the chain-closures (as defined in~\cite{Markowsky}) of non-empty
chains in $P$, each of which either has the form $\{q\in P\mid q\leqslant p\}$
for some $p\in P$, or is an ideal $J$ of $P$ that has no supremum
in $P$. Property C5 follows directly from this. This is a weaker
property than chain-uniqueness; in fact, $\overline{P}$ is chain-unique
over $P$ iff $P$ is chain-unique over itself.

We need two variations of this construction in what follows:
\begin{defn}
If $P$ is a countable PO system, we define the \emph{chain completion
}$\overline{P}$ of $P$ to be the PO system whose underlying poset
is the chain completion of the poset underlying $P$, and setting
$\overline{P}^{d}=P^{d}$ (i.e.\ $q<q$ for all $q\in\overline{P}-P$).

Let $P$ and $Q$ be posets or PO systems with $Q\subseteq P$ and
$Q$ countable. We define the \emph{chain completion of $P$ over
$Q$}, denoted $\overline{P:Q}$, to be the subset of $\overline{P}$
consisting of elements of $P$ together with the suprema in $\overline{P}$
of all ideals in $Q$, so that $\overline{P:Q}$ is $\omega$-complete
over $Q$. If $\mathscr{X}$ is a semi-trim $P$-partition of a topological
space, we write $\overrightarrow{P}$ for $\overline{P:\widehat{P}}$.
\end{defn}

\begin{prop}
Let $P$ be a countable poset or PO system and let $\mathscr{X}$
be a semi-trim $P$-partition of the Stone space $W$. If $P=\widehat{P}$
or if $\overline{P}$ is countable or if $P$ is chain-unique over
$\widehat{P}$, then $\overrightarrow{P}=\overline{P}$.
\end{prop}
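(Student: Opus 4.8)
The inclusion $\overrightarrow{P}\subseteq\overline{P}$ is immediate from the definition of $\overline{P:\widehat{P}}$, so the plan is to establish the reverse inclusion under each of the three hypotheses. Since every element of $P$ lies in $\overrightarrow{P}$, it suffices to show that each $q\in\overline{P}-P$ is the supremum in $\overline{P}$ of an increasing sequence drawn from $\widehat{P}$; for $q\in\overline{P}-P$ this is in fact equivalent to $\widehat{P}\cap\{r:r\leqslant q\}$ being up-directed, since a countable up-directed set with supremum $q$ always contains an increasing cofinal subsequence. The one lemma used throughout, valid for any semi-trim partition because $\widehat{P}\subseteq P$ is countable, is: \emph{for every $p\in P$ there is an increasing sequence $b_{1}\leqslant b_{2}\leqslant\cdots$ in $\widehat{P}$ with each $b_{k}\leqslant p$ and $\sup_{P}\{b_{k}\}=p$}. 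To prove it, pick $x\in X_{p}$; by semi-trimness~(iii) $p=\sup\{t(A)\mid x\in A\text{ trim}\}$, the set $S=\{t(A)\mid x\in A\text{ trim}\}$ is up-directed (any two trim neighbourhoods of $x$ contain a common trim neighbourhood of $x$, whose type dominates both by Proposition~\ref{semi-trim closure}(ii)(a)), $S\subseteq\widehat{P}$ with every element $\leqslant p$, and $S$ is countable; enumerating $S$ and taking running upper bounds inside $S$ gives an increasing cofinal sequence in $S$, whose supremum is still $p$ (and, by property~3 of the chain completion, also its supremum in $\overline{P}$).

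If $P=\widehat{P}$ the claim is immediate from property~2: every element of $\overline{P}$ is the supremum of an increasing $P$-sequence, hence of an increasing $\widehat{P}$-sequence. For the case where $P$ is chain-unique over $\widehat{P}$, fix $q\in\overline{P}-P$ and, using property~2 and passing to a cofinal subsequence, choose a strictly increasing $(p_{n})\subseteq P$ with $\sup_{\overline{P}}p_{n}=q$. For each $n$ use the lemma to fix an increasing $(b^{n}_{k})_{k}\subseteq\widehat{P}$ with $b^{n}_{k}\leqslant p_{n}$ and $\sup_{\overline{P}}b^{n}_{k}=p_{n}$. The plan is to build a single increasing sequence $(c_{j})\subseteq\widehat{P}$ with (a) each $c_{j}\leqslant p_{n}$ for some $n$, and (b) each $b^{n}_{k}\leqslant c_{j}$ for some $j$; then $\sup_{\overline{P}}c_{j}\leqslant q$ by (a), while $\sup_{\overline{P}}c_{j}\geqslant\sup_{k}b^{n}_{k}=p_{n}$ for every $n$ by (b), so $\sup_{\overline{P}}c_{j}=q$ and $q\in\overrightarrow{P}$. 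One builds $(c_{j})$ step by step, processing the $b^{n}_{k}$ in a diagonal enumeration; the only nontrivial move is: given $c_{j}\in\widehat{P}$ and a target $b^{n_{0}}_{k_{0}}$, choose $N$ large so that $c_{j}<p_{N}$ and $b^{n_{0}}_{k_{0}}<p_{N}$, and then find $c_{j+1}\in\widehat{P}$ above both with $c_{j+1}\leqslant p_{N}$. Here chain-uniqueness of $P$ over $\widehat{P}$ applies to the increasing sequence $(b^{N}_{k})_{k}\subseteq\widehat{P}$, whose supremum $p_{N}$ lies in $P$: it yields $c_{j}\leqslant b^{N}_{k_{1}}$ and $b^{n_{0}}_{k_{0}}\leqslant b^{N}_{k_{2}}$ for suitable $k_{1},k_{2}$, so $c_{j+1}=b^{N}_{\max(k_{1},k_{2})}$ works.

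The case $\overline{P}$ countable follows the same template — reduce to realising each $q\in\overline{P}-P$ as a supremum of an increasing $\widehat{P}$-sequence, then merge the approximating sequences $(b^{n}_{k})_{k}$ — but chain-uniqueness is no longer available to supply the absorption step directly. Instead one argues that $\widehat{P}$, being cofinally dense below every point of $P$ by the lemma, is dense in the countable $\omega$-complete poset $\overline{P}$, so that $\overline{P}\subseteq\overrightarrow{P}$; the transition from ``dense in $P$'' to ``dense in $\overline{P}$'' is where countability of $\overline{P}$ is used, via a bookkeeping argument over its countably many elements. I expect this merging/absorption step — producing from an increasing $P$-sequence a single increasing $\widehat{P}$-sequence cofinal with it — to be the main obstacle in all three cases, and it is precisely the point at which each hypothesis ($P=\widehat{P}$, chain-uniqueness over $\widehat{P}$, or countability of $\overline{P}$) does its work; the content of the proposition is that each of the three suffices.
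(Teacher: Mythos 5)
Your reduction, your approximation lemma (every $p\in P$ is the supremum of an increasing sequence in $\widehat{P}$, extracted from the countable up-directed set of types of trim neighbourhoods of a point of $X_{p}$), and your handling of the cases $P=\widehat{P}$ and $P$ chain-unique over $\widehat{P}$ are correct and essentially coincide with the paper's argument: the paper likewise approximates each term $q_{n}$ of an increasing $P$-sequence with supremum $q$ by an increasing $\widehat{P}$-sequence and uses chain-uniqueness to absorb all of these into a single increasing sequence. (One small slip: being the supremum of an increasing $\widehat{P}$-sequence is not \emph{equivalent} to $\widehat{P}\cap\{r\mid r\leqslant q\}$ being up-directed unless you also require that set to have supremum $q$; but you never use this equivalence, so it is harmless.)

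The genuine gap is the case where $\overline{P}$ is countable, which you do not actually prove. Saying that $\widehat{P}$ ``is dense in the countable $\omega$-complete poset $\overline{P}$, so that $\overline{P}\subseteq\overrightarrow{P}$'' is a restatement of the goal, not an argument: density in $\overline{P}$ in the relevant sense (every element is the supremum of an increasing $\widehat{P}$-sequence) is precisely the conclusion, and it does not follow softly from density below every point of $P$. Worse, the template you indicate --- merging the approximating sequences $(b_{k}^{n})_{k}$ by absorption --- is exactly the step that has no justification here: given $c_{j}\leqslant p_{n}$ and a target $b_{k_{0}}^{n_{0}}$, without chain-uniqueness there is no reason for a common upper bound to exist in $\widehat{P}$ below some $p_{N}$. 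The paper's argument for this case runs on a different mechanism. Enumerate $\overline{P}=\{s_{m}\}$; at stage $n$ take an increasing $\widehat{P}$-sequence $p_{n-1}=r_{n1}\leqslant r_{n2}\leqslant\cdots$ with supremum $q_{n}$ \emph{anchored at} $p_{n-1}$ (obtained from Proposition~\ref{countabletrimbase}(i) applied inside a $p_{n-1}$-trim set --- your unanchored lemma does not keep the constructed sequence increasing); then choose $p_{n}=r_{nk_{n}}$ with $k_{n}$ large enough that $s_{m}\ngeqslant p_{n}$ for every $m\leqslant n$ with $s_{m}\ngeqslant q_{n}$, which is possible since $s_{m}\geqslant r_{nj}$ for all $j$ would force $s_{m}\geqslant q_{n}$. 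This diagonalizes against every candidate supremum $s_{l}<q$: some $n\geqslant l$ has $s_{l}\ngeqslant q_{n}$, whence $s_{l}\ngeqslant p_{n}$ and $s_{l}\neq\sup\{p_{k}\}$. You correctly located where countability of $\overline{P}$ must enter, but this diagonalization --- the actual idea --- is missing from your proposal.
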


\begin{proof}
The case $P=\widehat{P}$ (e.g.\ if $\mathscr{X}$ is a trim $P$-partition
of $W$) is immediate. 

Choose any $q\in\overline{P}$ and find $q_{1}\leqslant q_{2}\leqslant\cdots$,
with $q_{n}\in P$ and $q=\sup_{\overline{P}}\{q_{n}\}$ (property
C2 of $\overline{P}$). We need to find an ideal $K$ of $\widehat{P}$
such that $q=\sup_{\overline{P}}K$. If $q\in P$, this follows by
semi-trimness (STP1A and C3), so we may assume that $q\notin P$.

If $\overline{P}$ is countable, apply Lemma~\ref{lem:ideal chain}
with $M=\widehat{P}$ and $Q=\overline{P}$ to obtain the desired
ideal $K$, as the Lemma conditions follow from C2, STP2 and the
fact that suprema are preserved within $\overline{P}$.

Suppose lastly that $P$ is chain-unique over $\widehat{P}$. For
each $n$, let $K_{n}=\{p\in\widehat{P}\mid p\leqslant q_{n}\}$,
which is an ideal of $\widehat{P}$ with $q_{n}=\sup_{P}K_{n}$ (STP1A).
Let $K=\bigcup_{n\geqslant1}K_{n}$, which is easily seen to be an
ideal of $\widehat{P}$ as $K_{1}\subseteq K_{2}\subseteq\cdots$,
and let $u=\sup_{\overline{P}}K$ (using C1). Then $u\leqslant q$,
as $p\lneqq q$ for each $p\in K$. But $u\geqslant\sup_{\overline{P}}K_{n}=\sup_{P}K_{n}=q_{n}$
for all $n$ and hence $q=u=\sup_{\overline{P}}K$.
\end{proof}
\begin{cor}
\label{Completion corollary}Let $P$ be a poset and let $\mathscr{X}$
be a semi-trim $P$-partition of the $\omega$-Stone~space~$W$.
Then $\mathscr{X}$ can be regularly extended to a complete semi-trim
$\overrightarrow{P}$-partition of~$W$. 
\end{cor}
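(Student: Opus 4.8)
The plan is to apply Theorem~\ref{Completion thm-1} with $Q=\overrightarrow{P}=\overline{P:\widehat{P}}$. I would first note that this is legitimate: since $W$ is an $\omega$-Stone space it is second countable, so $\widehat{P}$ is countable by Proposition~\ref{countabletrimbase}(i)(a) and hence $\overline{P:\widehat{P}}$ is defined; moreover $P\subseteq\overrightarrow{P}\subseteq\overline{P}$, with $\overrightarrow{P}$ carrying the order induced from $\overline{P}$, and $\widehat{P}\subseteq\overrightarrow{P}$. It then suffices to verify that $\overrightarrow{P}$ satisfies conditions (i), (ii) and (iii) of Theorem~\ref{Completion thm-1}, since that theorem would then produce a regular extension of $\mathscr{X}$ to a complete semi-trim $\overrightarrow{P}$-partition of $W$.

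Conditions (i) and (iii) should be straightforward. For (i): given $q\in\overrightarrow{P}$, if $q\in P$ I would pick $x\in X_{q}$ and, by semi-trimness, a trim neighbourhood $A$ of $x$, so that $t(A)\in\widehat{P}$ and $q\geqslant t(A)$; if $q\in\overrightarrow{P}-P$ then $q=\sup_{\overline{P}}\{q_{n}\mid n\geqslant1\}$ for some increasing sequence in $\widehat{P}$, so $q_{1}\leqslant q$ with $q_{1}\in\widehat{P}$. For (iii): given an increasing sequence $\{p_{n}\}$ in $\widehat{P}$, its supremum $q$ in $\overline{P}$ exists since $\overline{P}$ is $\omega$-complete, and lies in $\overrightarrow{P}$ --- either $q\in P$, or $q$ is the supremum of an increasing sequence in $\widehat{P}$ and hence belongs to $\overline{P:\widehat{P}}$ by definition; since any upper bound of $\{p_{n}\}$ in $\overrightarrow{P}$ is one in $\overline{P}$, $q$ is also the supremum in $\overrightarrow{P}$, so $\overrightarrow{P}$ is $\omega$-complete over $\widehat{P}$.

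Condition (ii) is the crux. Here I would suppose $p\in\widehat{P}$, $q\in\overrightarrow{P}$ with $p<q$, and produce an increasing sequence $p=p_{1}\leqslant p_{2}\leqslant\cdots$ in $\widehat{P}$ with supremum $q$. If $q\in P$, I would take a $p$-trim set $A$ (available since $p\in\widehat{P}$); since $q\geqslant p$ we have $A\cap X_{q}\neq0$, so picking $w\in A\cap X_{q}$ and applying Proposition~\ref{countabletrimbase}(i)(b),(c) gives a neighbourhood base $A=B_{1}\supseteq B_{2}\supseteq\cdots$ of $w$ of trim sets with $t(B_{1})\leqslant t(B_{2})\leqslant\cdots$, $t(B_{1})=t(A)=p$ and $\sup\{t(B_{n})\}=q$, which is the sequence wanted. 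If $q\in\overrightarrow{P}-P$, then by the description of the elements of $\overline{P}$ recalled above, $q$ corresponds to $\{r\in P\mid r\leqslant q_{n}\text{ for some }n\}$ for some increasing sequence $\{q_{n}\}$ in $\widehat{P}$ with no supremum in $P$; since $p\in P$ and $p<q$, property~5 of $\overline{P}$ yields $p\leqslant q_{n}$ for some $n$, and then $p\leqslant q_{n}\leqslant q_{n+1}\leqslant\cdots$ is an increasing sequence in $\widehat{P}$ with supremum $q$ (the tail being cofinal in $\{q_{m}\}$). With (i), (ii) and (iii) in hand, Theorem~\ref{Completion thm-1} finishes the argument.

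The step I expect to be the main obstacle is condition~(ii) when $q\in\overrightarrow{P}-P$: one has to know that $p<q$ forces $p$ below a term of a sequence defining $q$, which is precisely where the explicit structure of Markowsky's chain completion (property~5, equivalently the fact that each element of $\overline{P}$ is the down-closure of a sequence) is needed; everything else is routine bookkeeping with semi-trimness and the definition of $\overline{P:\widehat{P}}$.
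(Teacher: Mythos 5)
Your proposal is correct and follows essentially the same route as the paper: verify conditions (i)--(iii) of Theorem~\ref{Completion thm-1} for $Q=\overrightarrow{P}$, with (i) and (iii) coming from semi-trimness and the definition of $\overline{P:\widehat{P}}$, and (ii) handled via Proposition~\ref{countabletrimbase}(i) when $q\in P$ and via property~5 of the chain completion (using countability of $\widehat{P}$) when $q\notin P$. Your identification of the $q\in\overrightarrow{P}-P$ case of condition (ii) as the crux, and the tail-of-the-defining-sequence trick there, is exactly what the paper does.
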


\begin{proof}
It is enough to show that $\overrightarrow{P}$ satisfies conditions
(i) to (iii) of Theorem~\ref{Completion thm}. Condition (iii) is
immediate from property C1 of $\overline{P}$ and the definition of
$\overrightarrow{P}$. Condition (i) follows from the definition of
$\overrightarrow{P}$, together with the observation that if $p\in P$,
then by semi-trimness of $\mathscr{X}$ we can find $q\in\widehat{P}$
with $q\leqslant p$. It remains to show that condition (ii) holds.

Suppose then that $p\in\widehat{P}$ and $q\in\overrightarrow{P}$
with $p\lneqq q$. If $q\in P$, then take $B$ to be any $p$-trim
set, choose $w\in B\cap X_{q}$ and let $J=I_{w}$, so that $q=\sup_{P}I_{w}=\sup_{\overrightarrow{P}}I_{w}$.
Otherwise, we can find an ideal $J$ of $\widehat{P}$ with $q=\sup_{\overrightarrow{P}}J=\sup_{Q}J$,
where $Q=\overline{\widehat{P}}$. Now $\widehat{P}$ is countable
by STP6, and so property C5 of the chain completion $Q$ applies
(as $q\notin P)$, to give $p\in J$.
\end{proof}
We will say that the $\overrightarrow{P}$-partition of $W$ created
above is the \emph{chain completion} of the semi-trim $P$-partition
$\mathscr{X}$ and we will denote it by $\overline{\mathscr{X}}$.
The partition $\overline{\mathscr{X}}$ will be strongly semi-trim
iff $\mathscr{X}$ is a strongly semi-trim $P$-partition of $W$
and $\widehat{P}$ is $\widehat{P}$-separated in $P$ (which by C3
means that $\widehat{P}$ is also $\widehat{P}$-separated in $\overrightarrow{P}$).

If $P$ is a PO system, Corollary~\ref{Completion corollary} still
holds provided $P^{d}$ is $\widehat{P}$-separated in $P$.
\begin{question}
Can we find countable posets $P\subseteq Q$ satisfying conditions
(i) and (ii) of Theorem~\ref{Completion thm}, letting $\widehat{P}=P$,
such that $\overrightarrow{Q}\neq\overline{Q}$, where $\overrightarrow{Q}=\overline{Q:P}$?
For then we could find an $\omega$-Stone space $W$ with a trim $P$-partition
(Theorem~\ref{Existence theorem}), which could be regularly extended
first to a semi-trim $Q$-partition (Theorem~\ref{Completion thm})
and then to a complete semi-trim $\overrightarrow{Q}$-partition of
$W$ (Corollary~\ref{Completion corollary}). This would be an example
of a complete $\overrightarrow{Q}$-partition of an $\omega$-Stone
space where the underlying poset $\overrightarrow{Q}$ was not $\omega$-complete
(cf~Theorem~\ref{Trim nec condition}).
\end{question}

The next Theorem shows the quasi-inverse nature of the chain completion
and clean interior of a partition.
\begin{thm}
Let $P$ and $Q$ be posets, and let $W$ be an $\omega$-Stone space.

(i) A trim $P$-partition $\mathscr{X}$ of $W$ is the clean interior
of its chain completion: $\mathscr{X}=\mathscr{\overline{X}}^{o}$.

(ii) If $\mathscr{Y}$ is a complete semi-trim $Q$-partition of $W$
and $Q$ is chain-unique over $\widehat{Q}$, then $\mathscr{Y}$
refines $\overline{\mathscr{Y}^{o}}$.
\end{thm}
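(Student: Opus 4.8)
The plan is to treat the two parts separately, exploiting that both the clean interior and the completion have been fully characterised in the preceding results, so the proof is essentially a matter of identifying the indexing posets and then using the uniqueness of trim partitions with a given type function (Theorem~\ref{Type function thm}).

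For part~(i), let $\mathscr{X}=\{X_p\mid p\in P\}^*$ be a trim $P$-partition of $W$, so $\widehat{P}=P$ and hence $\overrightarrow{P}=\overline{P:\widehat{P}}=\overline{P:P}$. First I would unwind the definitions: the completion $\overline{\mathscr{X}}$ is a complete semi-trim $\overrightarrow{P}$-partition which is a \emph{regular} extension of $\mathscr{X}$, so by Lemma~\ref{Regular extension lemma} it gives rise to exactly the same trim sets as $\mathscr{X}$, and $\widehat{\overrightarrow{P}}=\widehat{P}=P$. Now apply Proposition~\ref{Clean interior propn} to $\overline{\mathscr{X}}$: its clean interior $\overline{\mathscr{X}}^o$ is a trim $\widehat{\overrightarrow{P}}$-partition, i.e. a trim $P$-partition, giving rise to the same trim sets as $\overline{\mathscr{X}}$, hence the same trim sets as $\mathscr{X}$. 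Two trim $P$-partitions of $W$ with the same trim sets have the same type function (each $A\in R$ is a finite disjoint union of trim sets, by Lemma~\ref{Propn std split of trim set}), so by the uniqueness clause of Theorem~\ref{Type function thm} they coincide: $\mathscr{X}=\overline{\mathscr{X}}^o$. The only point needing a line of care is that the clean points of $\overline{\mathscr{X}}$ lying in a given $X_p$ are exactly the points of $X_p$ — but every point of $X_p$ is clean in $\mathscr{X}$ (trimness), hence has a $p$-trim neighbourhood, which is still $p$-trim for $\overline{\mathscr{X}}$, so it is a clean point of $\overline{\mathscr{X}}$ in $Y_p$; and no point outside $\bigcup X_p$ can be clean in $\overline{\mathscr{X}}$ with label in $P=\widehat{\overrightarrow P}$ for the same reason combined with fullness of $\mathscr{X}$.

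For part~(ii), let $\mathscr{Y}=\{Y_q\mid q\in Q\}^*$ be a complete semi-trim $Q$-partition with $Q$ chain-unique over $\widehat{Q}$. The clean interior $\mathscr{Y}^o$ is a trim $\widehat{Q}$-partition of $W$ (Proposition~\ref{Clean interior propn}), and by the Proposition preceding Corollary~\ref{Completion corollary}, since $\widehat{Q}$-here I note $\widehat{Q}$ plays the role of the ``$P$'' in that Proposition, and chain-uniqueness of $Q$ over $\widehat{Q}$ is the hypothesis needed-we get $\overrightarrow{\widehat{Q}}=\overline{\widehat{Q}}$, so $\overline{\mathscr{Y}^o}$ is a complete semi-trim $\overline{\widehat{Q}}$-partition of $W$. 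I must show $\mathscr{Y}$ refines it, i.e. each $Y_q$ is contained in a single block $Z_r$ of $\overline{\mathscr{Y}^o}$. The natural candidate is: for $y\in Y_q$, the value $r=\sup\{t(A)\mid y\in A,\ A\text{ trim}\}$ computed in $\overline{\widehat{Q}}$; by construction of the completion (Step~1 in the proof of Theorem~\ref{Completion thm-1}, applied to $\mathscr{Y}^o$) this $r$ is the label of $y$ in $\overline{\mathscr{Y}^o}$. The key fact is that $\mathscr{Y}$ and $\mathscr{Y}^o$ give rise to the same trim sets (Proposition~\ref{Clean interior propn}), so this supremum depends only on which trim neighbourhoods $y$ has, which is the same data whether computed via $\mathscr{Y}$ or $\mathscr{Y}^o$. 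Now if $y,y'\in Y_q$, I want them to yield the same $r$. Take trim neighbourhood bases $B_1\supseteq B_2\supseteq\cdots$ of $y$ and $C_1\supseteq C_2\supseteq\cdots$ of $y'$; by semi-trimness of $\mathscr{Y}$, $q=\sup\{t(B_n)\}=\sup\{t(C_n)\}$ computed in $Q$. Since $Q$ is chain-unique over $\widehat{Q}$ and all the $t(B_n),t(C_n)$ lie in $\widehat{Q}$, the two ascending sequences in $\widehat{Q}$ with common supremum $q$ in $Q$ interleave (the Remark after the definition of chain-unique), hence also have a common supremum $r$ in $\overline{\widehat{Q}}$; this $r$ is the label of both $y$ and $y'$ in $\overline{\mathscr{Y}^o}$, so $Y_q\subseteq Z_r$.

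The main obstacle is the last step of part~(ii): pinning down precisely that the label of $y\in Y_q$ in $\overline{\mathscr{Y}^o}$ is $\sup_{\overline{\widehat{Q}}}\{t(B_n)\mid n\geqslant 1\}$ for a trim neighbourhood base $\{B_n\}$, and that this is well-defined independently of the base and of the point $y$ within $Y_q$. This requires carefully tracking the definition of $\upsilon(\cdot)$ from Step~1 of the proof of Theorem~\ref{Completion thm-1} through the two-stage construction $\mathscr{Y}^o\rightsquigarrow\overline{\mathscr{Y}^o}$, and invoking chain-uniqueness at exactly the right moment to force the interleaving. Everything else reduces to bookkeeping with the already-established facts that regular extensions and clean interiors preserve trim sets and that trim partitions are determined by their type functions.
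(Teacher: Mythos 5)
Part (i) of your plan is correct and is essentially the paper's argument: you assemble the same ingredients (Corollary~\ref{Completion corollary}, Lemma~\ref{Regular extension lemma}, Proposition~\ref{Clean interior propn}) to see that $\mathscr{X}$, $\overline{\mathscr{X}}$ and $\overline{\mathscr{X}}^{o}$ share the same trim sets and that $\widehat{\overrightarrow{P}}=P$; your final step via type functions and the uniqueness clause of Theorem~\ref{Type function thm} is a harmless variant of the paper's direct argument (clean points of $\overline{\mathscr{X}}$ with label in $P$ are exactly the points of $X_{p}$, by trimness and fullness).

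In part (ii), however, there is a genuine gap at the key step. You deduce that two points $y,y'\in Y_{q}$ get the same label in $\overline{\mathscr{Y}^{o}}$ by interleaving the two ascending sequences $\{t(B_{n})\}$, $\{t(C_{n})\}$ in $\widehat{Q}$ with common supremum $q$ in $Q$. But chain-uniqueness only controls elements $r\in\widehat{Q}$ with $r<q$ \emph{strictly}, so interleaving fails precisely when $q\in\widehat{Q}$ and $Y_{q}$ contains both a clean point and a limit point: the base at the clean point is eventually $q$-trim, while $t(C_{m})<q$ for every $m$, so $q\nleqslant t(C_{m})$ and there is no interleaving. (The Remark you cite is itself only valid when neither sequence attains the supremum, so it cannot rescue the step.) This mixed case genuinely occurs under the hypotheses of (ii): in the paper's first example after the theorem, with $P=Q=\{p_{1}<p_{2}<\cdots\}\cup\{q\}$, $q=\sup\{p_{n}\}$, the complete semi-trim partition $\mathscr{Z}$ has $Z_{q}=X_{q}\cup Y_{r}$ made up of clean points and limit points, and $Q$ is chain-unique over $\widehat{Q}$. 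The missing idea is the one the paper supplies: when $q\in\widehat{Q}$ and $w\in Y_{q}$ is a limit point, one has $q=\sup_{Q}E_{w}=\sup_{\widehat{Q}}E_{w}$, and since chain-completion preserves suprema (property 3 of $\overline{P}$) also $q=\sup_{\overline{\widehat{Q}}}E_{w}$, so the limit points of $Y_{q}$ receive label $q$ as well; together with your (correct) interleaving argument for the remaining cases ($q\notin\widehat{Q}$, or two limit points, where all terms lie strictly below $q$, and two clean points, which is trivial), this closes the gap and recovers the paper's map $\alpha:Q\rightarrow\overrightarrow{P}$ with $Y_{q}\subseteq Z_{q\alpha}$, by Lemma~\ref{Chain unique lemma}. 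A minor further point: your appeal to chain-uniqueness to get $\overrightarrow{\widehat{Q}}=\overline{\widehat{Q}}$ is the wrong clause of that Proposition --- chain-uniqueness of $Q$ over $\widehat{Q}$ does not directly give chain-uniqueness of $\widehat{Q}$ over itself; instead use that $\mathscr{Y}^{o}$ is trim, so the case $P=\widehat{P}$ applies.
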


\begin{rem}
This also holds if $P$ and $Q$ are PO systems, provided $P^{d}$
is $\widehat{P}$-separated in $P$ and similarly for $Q$. 
\end{rem}

\begin{proof}
For (i), let $\mathscr{Y}=\overline{\mathscr{X}}=\{Y_{q}\mid q\in\overrightarrow{P}\}$
be the chain completion of the trim $P$-partition $\mathscr{X}=\{X_{p}\mid p\in P\}^{*}$,
and let $\mathscr{Y}^{o}=\{Z{}_{r}\mid r\in\widehat{\overrightarrow{P}}\}^{*}$
be the clean interior of $\mathscr{Y}$. By Corollary~\ref{Completion corollary},
Lemma~\ref{Regular extension lemma} and Proposition~\ref{Clean interior propn},
$\mathscr{X}$, $\mathscr{Y}$ and $\mathscr{Y}^{o}$ all give rise
to the same trim sets, so $\widehat{\overrightarrow{P}}=P$. 

If $p\in P$ and $x\in X_{p}$, then $x$ is a clean point of $Y_{p}$,
so $x\in Z_{p}$ and $X_{p}\subseteq Z_{p}$. Conversely, if $r\in P$
and $z\in Z_{r}$, then $z$ has a neighbourhood base of $r$-trim
sets. As $\mathscr{X}$ is full, we have $z\in X_{r}$, $Z_{r}=X_{r}$,
and so $\mathscr{X}$ is the clean interior of its chain completion.

For (ii), let $\mathscr{Y}=\{Y_{q}\mid q\in Q\}$ be a complete semi-trim
$Q$-partition of $W$, whose clean interior is $\mathscr{X}=\mathscr{Y}^{o}=\{X_{p}\mid p\in P\}^{*}$,
where $P=\widehat{Q}$, which by Proposition~\ref{Clean interior propn}
is a trim $P$-partition of $W$. Let $\mathscr{Z}=\overline{\mathscr{X}}=\{Z{}_{r}\mid r\in\overrightarrow{P}\}$
be the chain completion of $\mathscr{X}$. By Proposition~\ref{Clean interior propn},
Lemma~\ref{Regular extension lemma} and Corollary~\ref{Completion corollary},
the partitions $\mathscr{Y}$, $\mathscr{X}$ and $\mathscr{Z}$ all
give rise to the same trim sets, and hence give rise to the same clean
points as $\mathscr{X}$ is full. 

If $w\in Y_{q}$ ($q\in Q$), then $q=\sup_{Q}I_{w}=\sup_{Q}J_{q}$,
where $J_{q}=\{p\in P\mid p\leqslant q\}$, as by Lemma~\ref{Chain unique lemma}
applied to $\mathscr{Y}$ we have $\text{\ensuremath{\{p\in P\mid p\lneqq q\}\subseteq I_{w}}}$.
Moreover, $w\in Z_{r}$ where $r=\sup_{\overrightarrow{P}}I_{w}=\sup_{\overrightarrow{P}}J_{q}$:
as if $q\in P$, then $q=\sup_{Q}I_{w}=\sup_{P}I_{w}=\sup_{\overrightarrow{P}}I_{w}$,
because $\overrightarrow{P}\subseteq\overline{P}$ and chain completion
of a poset preserves suprema.

We can therefore define a map $\alpha:Q\rightarrow\overrightarrow{P}$
by setting $q\alpha=\sup_{\overrightarrow{P}}J_{q}$, with $Y_{q}\subseteq Z_{q\alpha}$.
An easy check shows that $\alpha$ is an order isomorphism (i.e.\ if
$q\leqslant r$ then $q\alpha\leqslant r\alpha$), and $\alpha$ is
surjective as $Z_{r}\neq\emptyset$ for each $r\in\overrightarrow{P}$. 

It follows that $\overrightarrow{P}=Q\alpha$ and that $Z_{r}=\bigcup\{Y_{q}\mid q\alpha=r\}$,
as $\mathscr{Y}$ and $\mathscr{Z}$ are complete partitions. Hence
$\mathscr{Y}$ refines $\mathscr{Z}$, as required.
\end{proof}
\begin{rem}
The map $\alpha:Q\rightarrow\overrightarrow{P}$ is ``nearly'' injective
on $Q-P$, as if $q,r\in Q-P$ and $q\alpha=r\alpha=s$ say, with
$s\notin P$, then by C5 we have $\{p\in P\mid p\lneqq q\}=\{p\in P\mid p\lneqq r\}$,
and hence (taking $\sup_{Q})$ that $q=r$.

For a general choice of $Q$, and using the notation in the proof
of the previous Theorem, the relationship between $\mathscr{Y}$ and
$\mathscr{Z}$ is more complex, with $Y_{p}\subseteq Z_{p}$ for $p\in P$,
and $Z_{r}\subseteq Y_{r\beta}$ for $r\notin P$ where $r\beta=\sup_{Q}\{p\in P\mid p\lneqq r\}$:
as if $w\in Z_{r}$ $(r\notin P)$ then $I_{w}=\{p\in P\mid p\lneqq r\}$,
using property C5 of $\overline{P}$ and the first half of the proof
of Lemma~\ref{Chain unique lemma}. The next two examples illustrate
what can go wrong when $Q$ is chain-unique over $\widehat{Q}$ and
$q\alpha=r\alpha\in P$, or when $Q$ is not chain-unique over $\widehat{Q}$. 
\end{rem}

\begin{example}
(``$q\alpha=r\alpha\in P$'') Example~\ref{exa:Ideal completion}
exhibited a Stone space with a trim $P$-partition $\mathscr{X}$,
where $P=\mathbb{N}\cup\{\infty\}$, which could be extended to a
complete semi-trim partition in (at least) two different ways:

(a) expand $X_{\infty}$ to include all limit points. This is the
chain completion $\mathscr{Z}=\overline{\mathscr{X}}$ of $\mathscr{X}$,
which is a $P$-partition (as $P$ is $\omega$-complete) but is not
strongly semi-trim as $Z_{\infty}$ will contain a mix of clean and
limit points;

(b) let $Q=P\cup\{r\}$, where $n\lneqq r\lneqq\infty$ for all $n$,
let $Y_{p}=X_{p}$ for $p\in P$ and let $Y_{r}=W-X_{P}$ (i.e.\ all
the limit points). The resulting partition $\mathscr{Y}$ will be
a complete strongly semi-trim $Q$-partition of $W$, with $\widehat{Q}=P$
and $\mathscr{Y}^{o}=\mathscr{X}$. For $\alpha:Q\rightarrow P$ defined
above, we have $\infty\alpha=r\alpha=\infty$, $Z_{\infty}=Y_{\infty}\dotplus Y_{r}$,
and $\mathscr{Y}$ refines $\overline{\mathscr{X}}$, as per the previous
Theorem.
\end{example}

If $Q$ is not chain-unique over $\widehat{Q}$, then the map $\alpha$
may cease to be well-defined. 
\begin{example}
Let $P$ be the poset $\{p_{1},p_{2},\ldots,q_{1},q_{2},\ldots\}$,
where $\{p_{n}\}$ and $\{q_{n}\}$ are increasing sequences. By Theorem~\ref{Existence theorem}
there is an $\omega$-Stone space $W$ with a trim $P$-partition,
which can be extended to a complete semi-trim $\overrightarrow{P}$-partition
$\mathscr{Z}=\{Z_{u}\mid u\in\overrightarrow{P}\}$ of $W$ by Corollary~\ref{Completion corollary}.
Now $\overrightarrow{P}=P\cup\{r,s\}$ where $r=\sup\{p_{n}\}$ and
$s=\sup\{q_{n}\}$, and $Z_{r}$ and $Z_{s}$ contain only limit points. 

Let $Q=P\cup\{t\}$, where $t=\sup\{p_{n}\}=\sup\{q_{n}\}$, and let
$Y_{p}=Z_{p}$ for $p\in P$ and $Y_{t}=Z_{r}\cup Z_{s}$; then $\{Y_{q}\mid q\in Q\}$
is a complete semi-trim $Q$-partition. Its clean interior will be
$\{Z_{p}\mid p\in P\}^{*}$ and the chain completion of its clean
interior will be $\{Z_{u}\mid u\in\overrightarrow{P}\}$. 

So in the previous example $Z_{\infty}$ splits into two partition
elements in $\mathscr{Y}$ (noting that $q\in P$), whereas in this
example $Y_{t}$ splits into two partition elements in $\mathscr{Z}$
(noting that $t\notin P$). 
\end{example}

\begin{rem}
If $P$ is a poset and $\mathscr{X}$ is a trim $P$-partition of
an $\omega$-Stone space $W$, we can consider the family $\mathscr{F}$
of semi-trim partitions $\mathscr{Y}$ of $W$ such that $\mathscr{Y}^{o}=\mathscr{X}$,
noting that $P$ is countable. All partitions in $\mathscr{F}$ share
the same trim sets, and every partition $\mathscr{Y}\in\mathscr{F}$
can be regularly extended to a complete semi-trim partition $\overline{\mathscr{Y}}\in\mathscr{F}$
(Corollary~\ref{Completion corollary}). There is a canonical complete
semi-trim member of $\mathscr{F}$, namely the chain completion $\overline{\mathscr{X}}$.
Other complete partitions in $\mathscr{F}$ may be refinements of
$\overline{\mathscr{X}}$ or may be related to $\overline{\mathscr{X}}$
in more complex ways, such as those illustrated in the two previous
examples.
\begin{rem}
This sub-section has looked at the chain completions of a poset or
PO system $P$, and of a semi-trim $P$-partition of a Stone space.
There is a second type of completion of a poset $P$, the \emph{ideal
completion}, whose elements are all the ideals of $P$, where (unlike
the chain completion) an ideal is not identified with its supremum.
Example~\ref{exa:Ideal completion} is the simplest example of a
trim partition where the chain completion and ideal completion are
different. The ideal completion does not preserve (indeed, deliberately
destroys) suprema, but has the advantages firstly that the ideal completion
of a trim partition (using Theorem~\ref{Completion corollary 2})
is strongly semi-trim, and secondly that no additional separation
restrictions are required when $P$ is a PO system. Remark~\ref{rem:Index partition}
shows another example of the ideal completion at work.
\end{rem}

\end{rem}

\section{\label{sec:Existence-Conditions}Existence conditions}

\subsection{Necessary conditions for existence of complete $P$-partitions}
\begin{thm}
\label{Trim nec condition}Let $P$ be a poset or PO system.

(i) A trim $P$-partition of a Stone space is complete iff $P$ has
the ACC;

(ii) If there is an $\omega$-Stone space that admits a complete $P$-partition,
and $P$ is countable, then $P$ is $\omega$-complete.
\end{thm}

\begin{rem}
Part (i) generalises a result proved in Pierce~\cite[3.11.2]{PierceMonk}.
\end{rem}

\begin{proof}
\textbf{(}i) If $P$ has the ACC then any trim $P$-partition is complete,
by STP3. Suppose conversely that the Stone space $W$ admits a complete
trim $P$-partition $\{X_{p}\mid p\in P\}$, and that $q_{1}\lneqq q_{2}\lneqq q_{3}\lneqq\cdots$
is an ascending chain in $P$. 

We can find $A_{1}\supseteq A_{2}\supseteq A_{3}\supseteq\cdots$
such that $A_{n}$ is $q_{n}$-trim, as we can find a $q_{1}$-trim
$A_{1}$, and if $A_{k}$ is $q_{k}$-trim then $A_{k}\cap X_{q_{k+1}}\neq\emptyset$,
so we can find a $q_{k+1}$-trim $A_{k+1}\subseteq A_{k}$. Let $A_{\infty}=\bigcap_{i\geqslant1}A_{i}$,
which is non-empty by compactness. For each $x\in A_{\infty}$, find
a $p_{x}$-trim open neighbourhood $B_{x}$ of $x$, where $x\in X_{p_{x}}$;
we note that $p_{x}\gneqq q_{n}$ for all $n$. By considering the
cover $\{A_{1}-A_{2},A_{2}-A_{3},\ldots,\{B_{x}\mid x\in A_{\infty}\}\}$
of the compact set $A_{1}$, we obtain a finite subset $E$ of $A_{\infty}$
such that $\bigcup_{x\in E}B_{x}\supseteq A_{n}$ for some $n$, and
so $q_{n}\geqslant p_{x}$ for some $x\in E$, which is a contradiction.
Hence $P$ has the ACC\@. 

(ii) Suppose that the Stone space $W$ of the countable Boolean ring
$R$ admits a complete $P$-partition $\{X_{p}\mid p\in P\}$, where
$P$ is countable. We must show that $P$ is $\omega$-complete.

Let $q_{1}\leqslant q_{2}\leqslant q_{3}\leqslant\cdots$ be an ascending
chain in $P$ and let $P=\bigcup_{n\geqslant1}P_{n}$, where $\{q_{1}\}=P_{1}\subseteq P_{2}\subseteq\cdots$
and each $P_{n}$ is finite. We claim that we can choose $A_{1}\supseteq A_{2}\supseteq A_{3}\supseteq\cdots$
such that for all $i$, $A_{i}\in R$ and

\begin{equation}
\begin{array}{c}
A_{i}\cap X_{q_{i}}\neq\emptyset\\
A_{i}\cap X_{p}=\emptyset\text{ for all }p\in P_{i}\text{\text{ such that }\ensuremath{p\ngeqslant q_{i}}}
\end{array}\Biggl\}\label{eq:Chain construction-1}
\end{equation}

To see this, choose $A_{1}\in R$ with $A_{1}\cap X_{q_{1}}\neq\emptyset$,
enumerate the subsets of $A_{1}$ in $R$ as $\{B_{i}\mid i\geqslant1\}$
with $B_{1}=A_{1}$, and let $R_{n}$ be the finite Boolean ring generated
by $\{B_{i}\mid1\leqslant i\leqslant n\}$. Suppose $n\geqslant2$
and we have already chosen $A_{2},\ldots,A_{n-1}$ satisfying (\ref{eq:Chain construction-1}).
To choose $A_{n}$, find $x_{n}\in A_{n-1}\cap X_{q_{n}}$ (as $q_{n}\in T(A_{n-1})$),
find $C_{n}\in R$ containing $x_{n}$ with $C_{n}\cap X_{p}=\emptyset$
for all $p\in P_{n}$ such that $p\ngeqslant q_{n}$ (i.e.\ such
that $\overline{X_{p}}\cap X_{q_{n}}=\emptyset$), and let $A_{n}=A_{n-1}\cap C_{n}\cap D_{n}$,
where $D_{n}$ is the atom of $R_{n}$ containing $x_{n}$. Then $A_{n}$
satisfies (\ref{eq:Chain construction-1}), as required.

Now let $A_{\infty}=\bigcap_{i\geqslant1}A_{i}$, which is non-empty
by compactness. Let $x_{\infty}$ be an element of $A_{\infty}$.
If now $y\neq x_{\infty}$ is any other element of $A_{1}$, then
we can find disjoint subsets $B_{j}$ and $B_{k}$ of $A_{1}$ containing
$x_{\infty}$ and $y$ respectively. Hence $x_{\infty}$ and $y$
will belong to different atoms of $R_{l}$, where $l=\max(j,k)$,
so $y\notin A_{l}$ and $y\notin A_{\infty}$. Hence $A_{\infty}=\{x_{\infty}\}$.

Find $p$ such that $x_{\infty}\in X_{p}$. We claim that $p$ is
the least upper bound of $\{q_{i}\}$. For suppose first that $r\geqslant q_{i}$
for all $i$. Then $\overline{X_{r}}\cap A_{i}\neq\emptyset$ for
all $i$, so by compactness $\overline{X_{r}}\cap A_{\infty}\neq\emptyset$;
hence $x_{\infty}\in\overline{X_{r}}$ and so $r\geqslant p$. Conversely,
we have $p\geqslant q_{i}$ for all $i$: as if $p\ngeqslant q_{i}$,
say, find $m\geqslant i$ such that $p\in P_{m}$; then $x_{\infty}\in A_{m}\cap X_{p}$,
but $p\ngeqslant q_{m}$, as $q_{m}\geqslant q_{i}$, and so by construction
$A_{m}\cap X_{p}=\emptyset$, which is a contradiction. It follows
that $P$ is $\omega$-complete. 
\end{proof}

\subsection{Construction of $P$-partitions}

We now turn to the construction of $P$-partitions, where $P$ is
a PO system; the next definitions provide a convenient way of describing
their isolated point and compactness structure.
\begin{defn}
An \emph{extended PO system }is a triple $[P,L,f]$, where $P$ is
a PO system, $L$ is a lower subset of $P_{\Delta}$, and $f:L_{\min}^{d}\rightarrow\mathbb{N}_{+}$,
where $L_{\min}^{d}=L_{\min}\cap P^{d}$. 

If $[P,L,f]$ is an extended PO system, an order automorphism $\theta$
of the underlying poset is a \emph{$[P,L,f]$-automorphism }if $L\theta=L$,
$P^{d}\theta=P^{d}$ and $f(p\theta)=f(p)$ for all $p\in L_{\min}^{d}$.

Let $W$ be a Stone space and $[P,L,f]$ an extended PO system. We
will say that a partition $\mathscr{X}=\{X_{p}\mid p\in P\}{}^{*}$
of $W$ is a \emph{$[P,L,f]$-partition of $W$ }if it is a $P$-partition
such that $\overline{X_{p}}$ is compact iff $p\in L$ and $|X_{p}|=f(p)$
if $p\in L_{\min}^{d}$; and that $\mathscr{X}$ is a \emph{bounded
$[P,L,f]$-partition }of $W$ if in addition $X_{L}$ is bounded,
where a subset $E$ of $W$ is \emph{bounded} if $\overline{E}$ is
compact and is otherwise \emph{unbounded}.
\end{defn}

\begin{rem}
If $p\in P^{d}-L_{\min}$ then $X_{p}$ is infinite, as $\overline{X_{p}}\supseteq X_{q}$
if $p\gneqq q$ (case $p\notin P_{\min}$) or $X_{p}$ is not compact
(case $p\notin L$); and by counting trim neighbourhoods, $X_{p}$
will be countable for such a $p$ if $R$ is countable. 

Different choices of $\{P,L,f,L^{u}\}$ in the following Theorem allow
the creation of $P$-partitions with varying isolated point and compactness
properties.
\end{rem}

\begin{thm}
\label{existence construction}Let $[P,L,f]$ be a countable extended
PO system and $L^{u}$ an upper subset of $L$. Let $L^{b}=L-L^{u}$.

Then there is a trim $[P,L,f]$-partition $\mathscr{X}=\{X_{p}\mid p\in P\}^{*}$
of an $\omega$-Stone space $W$ such that for any subset $Q\subseteq L$
that has a finite foundation but does not have a finite ceiling:

(i) if $Q\subseteq L^{b}$, then $X_{Q}$ is bounded;

(ii) if $Q$ is an upper subset of $L^{u}$, then $X_{Q}$ is unbounded.

$W$ will be compact iff $P$ has a finite foundation, $L=P$, and
$L^{u}$ has a finite ceiling.

The partition $\mathscr{X}$ will be complete iff $P$ has the ACC. 

If $P$ is $\omega$-complete, with $P^{d}$ separated in $P$ if
$P$ is a PO system, then $\mathscr{X}$ can be extended to a complete
semi-trim $[P,L,f]$-partition of $W$ satisfying (i) and (ii) above.
\end{thm}

\begin{rem}
\label{rem:Existence remark}For the same $L$, different choices
of $L^{u}$ may give rise to the same $[P,L,f]$-partition: for example,
if $L^{u}$ is changed by only finitely many elements. We note that
if $Q\subseteq L$ does not have a finite foundation, then $X_{Q}$
is always unbounded (Proposition~\ref{Compact means FF} applied
to $\overline{X_{Q}}$, as $T(\overline{X_{Q}})=Q_{\downarrow}$),
while if $Q$ has a finite ceiling and $\overline{X_{p}}$ is compact
for all $p\in Q$ then so is $\overline{X_{Q}}$.

Typical choices for $L^{u}$ as an upper subset of $L\subseteq P_{\Delta}$
are either an infinite antichain or an infinite ascending sequence.
\end{rem}

\begin{proof}
Our proof is an evolution of that for Theorem~7.1 of~\cite{AppsBP},
but ignoring the group action. Enumerate $P=\{p_{i}\mid i\geqslant1\}$,
let $P_{n}=\{p_{i}\mid i\leqslant n\}$ and let $M_{n}=\{p\in L^{u}\mid p\leqslant q\text{ for some }q\in P_{n}\cap L^{u}\}$.
For $p\in P$, let $g(p)=f(p)$ if $p\in L_{\min}^{d}$, otherwise
let $g(p)=1$.

\textbf{Step 1}. 

Let $Z(0)$ be the empty set. For each $n\geqslant1$, we can inductively
construct finite sets $Z(n)$, a partition 

\[
\{Z(n,A)\mid A\in Z(n-1)\}\cup U_{n}\text{ of }Z(n)
\]
and a type function $t:Z(n)\rightarrow P$ such that:

(a) if $A\in Z(n\lyxmathsym{\textemdash}1)$, then $Z(n,A)$ contains
exactly one element of type $q$ for each element of $\{q\in P_{n}\mid q\geqslant t(A)\}$,
except that if $n\geqslant2$ and $t(A)\notin P^{d}$ then $Z(n,A)$
contains exactly two elements of type $t(A)$;

(b) $U_{n}$ contains $g(p_{n})$ elements of type $p_{n}$ if either
(b1) there is no $A\in Z(n-1)$ such that $p_{n}\geqslant t(A)$,
or (b2) $p_{n}\in L^{u}-M_{n-1}$;

(c) for each element of $P_{n-1}-L$, $U_{n}$ also contains one element
of the corresponding type. 

It follows readily by induction that $P_{n}\subseteq\{t(B)\mid B\in Z(n)\}$.
We will say that elements of $Z(n,A)$ \emph{descend from $A$}, and
extend this transitively.

\textbf{Step 2}. 

We construct a countable Boolean ring $R$ from the $Z(n)$, in which
$\bigcup_{n\geqslant1}Z(n)$ will be a tree of trim sets, with $D\in Z(n)$
being $t(D)$-trim.

For each $n\geqslant1$, let $R_{n}$ be the ring of subsets of $Z(n)$,
whose atoms are the elements of $Z(n)$. Define an injection $\theta_{n}:R_{n-1}\rightarrow R_{n}$
by 

\[
A\theta_{n}=Z(n,A)\text{, for }A\in Z(n-1)
\]

Let $R$ be the direct limit of the system $R_{1}\rightarrow R_{2}\rightarrow R_{3}\rightarrow\cdots$,
which is a countable Boolean ring, and let $W$ be its Stone space.
For $A\in Z(n)$, write $\widetilde{A}$ for the image of $A$ in
$R$, viewed as a subset of $W$, and write $t(\widetilde{A})=t(A)$.
This is well-defined, as if $Z(n,A)$ is a singleton subset of $R_{n}$,
then $Z(n,A)$ contains a single point of type $t(A)$, and so $t(A\theta_{n})=t(A)$.
We note:

\begin{equation}
\text{if }A\in R_{n}\text{ and }B\in U_{m}\text{ }(m>n)\text{, then }A\cap\widetilde{B}=\emptyset.\label{eq:Um}
\end{equation}

Let $S=\bigcup_{n\geqslant1}\widetilde{Z(n)}$; we claim that $t:S\rightarrow P$
structures $R$. Conditions S1 and S2 of Definition~\ref{def:structure}
are immediate by construction, as if $A\in\widetilde{Z(n)}$ and $B\in\widetilde{Z(n+1)}$
with $B\subseteq A$ then $t(B)\geqslant t(A)$. For S3, if $A,B,C\in S$,
$A\supseteq B\dotplus C$ and $t(A)=t(B)=t(C)=p$, then by considering
descendants of $B$ and $C$ we may assume that $A\in\widetilde{Z(n)}$
and $B,C\in\widetilde{Z(m)}$ for some $m>n$; but if $p\in P^{d}$
then for all $m>n$ there is just one point of type $p$ amongst the
partition elements of $Z(m)$ that descend from $A$, and so $p\notin P^{d}$
and $p<p$. For S4, if $A=B\dotplus C$ with $A\in\widetilde{Z(n)}$,
find $m>n$ such that $B,C\in R_{m}$; then by construction there
will be an atom $D\subseteq A$ of $R_{m}$ such that $t(D)=t(A)$,
and either $D\subseteq B$ or $D\subseteq C$. Finally for S5, if
$A\in\widetilde{Z(n)}$ and $t(A)<q$, find $m>n$ such that $q\in P_{m}$,
and find $D\in\widetilde{Z(m-1)}$ such that $t(D)=t(A)$ and $D\subseteq A$;
then by construction we can find $B,C\in\widetilde{Z(m)}$ such that
$B,C\subseteq D$, $t(B)=t(A)$ and $t(C)=q$, as if $q=t(A)$ then
$t(A)\notin P^{d}$. 

So by Proposition~\ref{structure=00003Dtrim} we can find a trim
$P$-partition $\mathscr{X}=\{X_{p}\mid p\in P\}^{*}$ of $W$ such
that each $A\in S$ is $t(A)$-trim. We must show that $\mathscr{X}$
satisfies (i) and (ii) and is a trim $[P,L,f]$-partition.

\textbf{Step 3}. 

For (i), we will establish the following stronger statement:

\begin{equation}
\begin{array}{c}
\text{if the subset }Q\text{ of }L\text{ has a finite foundation}\\
\text{and \ensuremath{Q\cap L^{u}} has a finite ceiling, then }X_{Q}\text{ is bounded}
\end{array}\Biggl\}\label{eq:XQ compact}
\end{equation}

Replace $Q$ with $Q_{\downarrow}\subseteq L$ so that $Q$ is a lower
subset of $L$. Let $F$ be a finite foundation for $Q$ and $G$
a finite ceiling for $Q\cap L^{u}$, and find $n$ such that $F\cup G\subseteq P_{n}$,
so that $M_{n}=L^{u}$. Let $A=\bigcup_{B\in Z(n)}\widetilde{B}$,
which is compact. We claim that if $m\geqslant n$, $C\in Z(m)$ and
$t(C)\in Q$ then $\widetilde{C}\subseteq A$. 

This is clearly true for $m=n$. Suppose it is true for $m=k\geqslant n$,
and let $C\in Z(k+1)$ with $t(C)\in Q$. Now $t(C)\geqslant r$ for
some $r\in F$, and $F\subseteq P_{k}$, so there is a $B\in Z(k)$
with $t(B)=r$. By construction, $C\notin U_{k+1}$: it fails condition
(b1) as $t(C)\geqslant t(B)$ with $B\in Z(k)$, condition (b2) as
$M_{k}=L^{u}$ and condition (c) as $t(C)\in L$. Hence $\widetilde{C}\subseteq\widetilde{D}$
for some $D\in Z(k)$ with $t(D)\leqslant t(C)\in Q$; so $t(D)\in Q$
and by the induction hypothesis we have $\widetilde{C}\subseteq\widetilde{D}\subseteq A$.
The claim follows by induction. 

Suppose finally that $x\in X_{q}$ ($q\in Q$). Find $C\in Z(m)$
for some $m>n$ such that $x\in\widetilde{C}$. Then $t(\widetilde{C})\leqslant q$
and so $\widetilde{C}\subseteq A$. Hence $X_{Q}\subseteq A$, as
required. Statement (i) now follows.

For (ii), if $Q$ is an upper subset of $L^{u}$ that has a finite
foundation but does not have a finite ceiling, pick $A\in R$ and
suppose $A\in R_{n}$. Now $Q\nsubseteq M_{n}$ as otherwise $P_{n}\cap Q$
would be a finite ceiling for $Q$. Find the smallest $m>n$ such
that $Q\cap(M_{m}-M_{n})\neq\emptyset$ and choose $q\in Q\cap(M_{m}-M_{n})$;
then $q\notin M_{m-1}$, so we must have $q\leqslant p_{m}\in L^{u}$,
so $p_{m}\in Q$ as $Q$ is an upper subset of $L^{u}$. Let $r=p_{m}$.
By condition (b2) $U_{m}$ contains an element $B$ (say) with $t(B)=r\in Q-M_{m-1}$,
so $\widetilde{B}\cap A=\emptyset$ by (\ref{eq:Um}), whilst $\widetilde{B}\cap X_{r}\neq\emptyset$
as $\widetilde{B}$ is $r$-trim, and so $X_{r}\nsubseteq A$. Hence
$X_{Q}$ is unbounded, as required.

\textbf{Step 4: }$\mathscr{X}$ is a $[P,L,f]$-partition

We claim that $\overline{X_{p}}$ is compact iff $p\in L$. For if
$p\in L$, let $Q=\{q\in P\mid q\leqslant p\}$; then $Q$ satisfies
condition (\ref{eq:XQ compact}), as $p\in P_{\Delta}$ and either
$p\in L^{b}$, when $Q\cap L^{u}$ is empty (as $L^{b}$ is a lower
subset of $L$), or $p\in L^{u}$ and $Q$ has a finite ceiling. Hence
$X_{Q}$ is bounded and $\overline{X_{p}}$ is compact. Conversely,
suppose $p\notin L$. Then for all large enough $n$, by construction,
$U_{n}$ has an element $B_{n}$, say, of type $p$, and so $\widetilde{B_{n}}\cap X_{p}\neq\emptyset$,
whereas if $A\in R_{n-1}$ then $\widetilde{B_{n}}\cap A=\emptyset$
by (\ref{eq:Um}), so $X_{p}\nsubseteq A$. Hence there is no $A\in R$
such that $X_{p}\subseteq A$, and $\overline{X_{p}}$ is not compact.

If now $p\in L_{\min}^{d}$, then $Z(n)$ contains $f(p)$ elements
of type $p$ for all $n\geqslant m$, where $p=p_{m}$, as $p$ is
minimal in $P$ and so condition (b1) will bring in $f(p)$ points
of type $p$ precisely when $n=m$, condition (c) does not apply,
and condition (a) ensures that there continue to be exactly $f(p)$
such elements when $n>m$; hence by counting $p$-trim sets we have
$|\widetilde{Z(n)}\cap X_{p}|=f(p)$ for $n\geqslant m$. So $\mathscr{X}$
is indeed a trim $[P,L,f]$-partition of $W$.

Finally, if $W$ is compact, then by Proposition~\ref{Compact means FF}
$P$ has a finite foundation, and we must have $L=P$ and $L^{u}$
having a finite ceiling. The converse follows immediately by taking
$Q=P$ in (\ref{eq:XQ compact}) above.

\textbf{Step 5 }The partition $\mathscr{X}$ will be complete iff
$P$ has the ACC (Theorem~\ref{Trim nec condition})\@. If $P$
is $\omega$-complete, with $P^{d}$ separated in $P$ if $P$ is
a PO system, then by Corollary~\ref{Completion corollary 2} and
Proposition~(\ref{Prop: Ideals})(iii) $\mathscr{X}$ can be regularly
extended to a complete semi-trim $P$-partition $\mathscr{Y}=\{Y_{p}\mid p\in P\}$
of $W$ such that $X_{p}\subseteq Y_{p}$, with $Y_{p}\subseteq\overline{X_{p}}$
for all $p\in P$ (as if $y\in Y_{p}$ and $A\in V_{y}$, then $t(A)\leqslant p$,
so $A\cap X_{p}\neq\emptyset$). Hence $\overline{Y_{p}}=\overline{X_{p}}$
for all $p$, from which it follows easily that $\mathscr{Y}$ is
a $[P,L,f]$-partition of $W$ satisfying (i) and (ii).
\end{proof}
\begin{rem}
If $P$ has a finite foundation, then $P_{\min}$ and $L_{\min}^{d}$
will be finite.

It is interesting that the existence of complete trim, complete semi-trim,
or $P$-partitions depends on the properties of ascending chains in
$P$, whereas the existence of a compact Stone space that admits a
semi-trim $P$-partition depends on the properties of minimal elements
of $P$.

By Proposition~\ref{Isolated propn}, the isolated points of the
Stone space constructed in the previous Theorem will be the elements
of $X_{p}$ for $p\in P^{d}\cap P_{\max}$. So:

(a) $R$ will be atomless and the Stone space $W$ will be the Cantor
set (possibly minus one point) if $P^{d}\cap P_{\max}=\emptyset$; 

(b) if $P^{d}\cap P_{\max}\subseteq P_{\min}$, then $R$ will be
the direct product of a countable atomless Boolean ring (which may
or may not have an identity) and the ring of finite subsets of $P^{d}\cap P_{\max}$,
and the Stone space $W$ will be the Cantor set (possibly minus one
point) plus a set of isolated points, one for each $p\in P^{d}\cap P_{\max}$;

(c) otherwise $R$ and $W$ will have more complex structures.

For cases (a) and (b), if also $W$ is compact, we can embed $W$
as a closed subset of the unit interval $[0,1]$, and by letting $Q=P\cup\{\top\}$,
where $\top$ is an added top element, and letting $X_{\top}=[0,1]-W$,
which is open and dense in $[0,1]$, we obtain a trim $Q$-partition
of the unit interval $[0,1]$.
\end{rem}

\subsection{Existence results}

The next 3 results and their proofs relate to PO systems and follow
on directly from Theorem~\ref{existence construction}. If $(P,\vartriangleleft)$
is a poset, the relevant existence statement for the first two results
follows by considering the PO system $(\widetilde{P},<)$ which has
the same underlying set as $P$ and with $\widetilde{P}^{d}=\emptyset$:
i.e.\ $p<q$ iff $p\trianglelefteqslant q$.
\begin{thm}
\label{Existence theorem}Let $P$ be a countable poset or PO system.
Then:

(i) There is always an $\omega$-Stone space that admits a trim $P$-partition;

(ii) There is an $\omega$-Stone space that admits a complete trim
$P$-partition iff $P$ satisfies the ACC;

(iii) There is an $\omega$-Stone space that admits a complete semi-trim
$P$-partition iff $P$ is $\omega$-complete, with also $P^{d}$
separated in $P$ if $P$ is a PO system;

(iv) If $P$ is a poset, there is an $\omega$-Stone space that admits
a complete $P$-partition iff $P$ is $\omega$-complete.
\end{thm}

\begin{proof}
The existence of the relevant $P$-partitions in (i) to (iv) follows
from the construction in Theorem~\ref{existence construction}, taking
$L^{u}=\emptyset$, $L=L^{b}=P_{\Delta}$ and $f(p)=1$ for all $p\in L_{\min}^{d}$,
noting that $P_{\Delta}$ is a lower subset of $P$. The necessity
of the conditions on $P$ in parts (ii), (iii) and (iv) follows from
Theorem~\ref{Trim nec condition} and Proposition~\ref{Prop: Ideals}(ii)
and (iii).
\end{proof}
\begin{cor}
Let $P$ be a countable poset or PO system. Then there is a compact
Stone space with a trim $P$-partition iff $P$ has a finite foundation.
\end{cor}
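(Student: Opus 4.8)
The plan is to prove the two implications separately, pulling one from the existence construction of Theorem~\ref{existence construction} and the other from the compactness analysis of Proposition~\ref{Compact means FF}.

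For the ``if'' direction, suppose $P$ has a finite foundation. The first step is to note that then $P_{\Delta}=P$: given $p\in P$, take a finite foundation $F$ of $P$ with $F\subseteq P_{\min}$ (Lemma~\ref{Finfound}) and put $F_{p}=\{f\in F\mid f\leqslant p\}$; for any $r\leqslant p$ there is $f\in F$ with $f\leqslant r$, whence $f\leqslant p$ and $f\in F_{p}$, so $F_{p}$ is a finite foundation of $\{p\}$. Now apply Theorem~\ref{existence construction} with the choices $I=P^{u}=P^{\infty}=0$ and $P^{b}=P$. Because $P_{\Delta}=P$, all the hypotheses (that $P^{b}$ and $P^{b}\cup P^{u}$ be lower subsets of $P_{\Delta}$, that $\{q\in P^{u}\mid q\leqslant p\}$ be finite, and that $I\cap P_{\min}\cap P^{\infty}=0$) hold trivially, so the theorem produces a trim $P$-partition of the Stone space $W$ of a countable Boolean ring; and since $P^{\infty}=0$, $P^{u}$ is finite (indeed empty), and $P$ has a finite foundation, the last clause of that theorem gives that $W$ is compact.

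For the ``only if'' direction, suppose $W$ is a compact Stone space admitting a trim $P$-partition. Since a trim $P$-partition is by definition a semi-trim $P$-partition, Proposition~\ref{Compact means FF}(ii) applies directly and yields that $P$ has a finite foundation.

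I do not expect any genuine obstacle here: the substance is carried entirely by Theorem~\ref{existence construction} and Proposition~\ref{Compact means FF}, and the only point requiring an argument by hand is the elementary observation that a finite foundation of $P$ restricts to a finite foundation of each singleton $\{p\}$, so that $P_{\Delta}=P$ and the existence construction can be invoked with $P^{b}=P$.
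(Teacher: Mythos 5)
Your proof is correct and follows essentially the same route as the paper: the ``only if'' direction is exactly Proposition~\ref{Compact means FF}(ii), and the ``if'' direction is Theorem~\ref{existence construction} with $I=P^{u}=P^{\infty}=0$ and $P^{b}=P$. Your explicit verification that a finite foundation of $P$ restricts to one of each $\{p\}$, so that $P_{\Delta}=P$ and the hypothesis ``$P^{b}$ is a lower subset of $P_{\Delta}$'' holds, is a detail the paper leaves implicit but is the right thing to check.
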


\begin{proof}
``Only if'' was shown in Proposition~\ref{Compact means FF}. ``If''
follows from Theorem~\ref{existence construction}, taking $L^{u}=\emptyset$,
$L=L^{b}=P$ and $f(p)=1$ for all $p\in L_{\min}^{d}$.
\end{proof}
\begin{cor}
Let $P$ be a countable PO system such that $P=P_{\Delta}$, and $f:P_{\min}^{d}\rightarrow\mathbb{N}_{+}$.

(i) If $P$ has a finite ceiling, then any Stone space admitting a
trim $[P,P,f]$-partition is compact;

(ii) If $P$ does not have a finite ceiling, then there is a non-compact
Stone space admitting a trim $[P,P,f]$-partition.
\end{cor}

\begin{proof}
(i) If $G$ is a finite ceiling for $P$ and $\{X_{p}\mid p\in P\}^{*}$
is a trim $[P,P,f]$-partition of the Stone space $W$, then $\overline{X_{P}}=\bigcup_{p\in G}\overline{X_{p}}$
and so is compact; but $X_{P}$ is dense in $W$, so $W$ is compact. 

(ii) If $P$ does not have a finite ceiling, take $L=L^{u}=P$ in
Theorem~\ref{existence construction} to obtain a trim $[P,P,f]$-partition
of a non-compact Stone space.
\end{proof}

\section{\label{sec:Uniqueness-Conditions}Uniqueness conditions}

We now explore compactness and boundedness conditions which result
in unique trim $P$-partitions for a given countable PO system $P$.
If $P$ is a poset, we can then choose an arbitrary subset $P^{d}$
of $P$ and convert $P$ into a PO system to yield $P$-partitions
with the chosen partition elements being discrete and the rest having
no isolated points.
\begin{thm}
\label{uniqueness thm} (uniqueness) Let $[P,L,f]$ be a countable
extended PO system such that $L$ has a finite foundation. 

Then there is an $\omega$-Stone space $W$ and a bounded trim $[P,L,f]$-partition
$\mathscr{X}$ of $W$, and such a pair $\{W,\mathscr{X}\}$ is unique
up to $P$-homeomorphism.
\end{thm}

\begin{proof}
For existence, take $L^{b}=L$ and $L^{u}=\emptyset$ in the construction
in Theorem~\ref{existence construction}, noting that $L\subseteq P_{\Delta}$
and $X_{L}$ will be bounded. We must show uniqueness. Let $F$ be
a finite foundation for $L$; by Lemma~\ref{Finfound} we can take
$F$ to be $L\cap P_{\min}$ as $L$ is a lower set.

Suppose therefore that we have bounded trim $[P,L,f]$-partitions
$\mathscr{X}=\{X_{p}\}^{*}$ of $X$, with underlying countable Boolean
ring $R$ and type function $T$, and $\mathscr{Y}=\{Y_{p}\}^{*}$
of $Y$, with underlying countable Boolean ring $S$.

\textbf{Step 1.} Find $A\in R$ and $B\in S$ such that $X_{L}\subseteq A$
and $Y_{L}\subseteq B$. Using Proposition~\ref{Propn std split of trim set}(i),
write $A=A_{1}\dotplus\ldots\dotplus A_{n}$, where each $A_{n}$
is trim. Discarding any redundant $A_{j}$'s as necessary and reducing
$A$ accordingly, we may assume that $T(A_{j})\cap L\neq\emptyset$
for each $j$, so that $L\subseteq T(A)$ and $T(A)_{\min}=L_{\min}=F$.
Using Proposition~\ref{Propn std split of trim set}(i) again, we
can assume that the partition of $A$ contains precisely $n_{p}$
$p$-trim sets for each $p\in F$, where $n_{p}=1$ for $p\in F-P^{d}$
and $n_{p}=|A\cap X_{p}|=f(p)$ for $p\in F\cap P^{d}=L_{\min}^{d}$.
Repeating this process for $B$, we obtain matching trim partitions
$A=A_{1}\dotplus\ldots\dotplus A_{n}$ and $B=B_{1}\dotplus\ldots\dotplus B_{n}$
such that $t(A_{j})=t(B_{j})$ for each $j$, so that $\mu(A)=\mu(B)$.
By Theorem~\ref{Thm:same mu=00003Dhomeom}, taking $a\in X_{p}$
and $b\in Y_{p}$ for some $p\in F$, there is a $P$-homeomorphism
$\theta:A\rightarrow B$. 

\textbf{Step 2}. If $L=P$, then $R=(A)$ and $S=(B)$ and we are
done; otherwise $X$ and $Y$ are both non-compact. Suppose $D\in\Trim(\mathscr{X})$
with $D\cap A=\emptyset$, and $C\in S$ such that $B\subseteq C$,
and let $t(D)=q$. Then $q\notin L$, so $\overline{Y_{q}}$ is non-compact
and $Y_{q}\nsubseteq C$, so we can find $y\in Y_{q}-C$ and a $q$-trim
neighbourhood $E$ of $y$ such that $E\cap C=\emptyset$. By Theorem~\ref{Thm:same mu=00003Dhomeom},
$E$ is $P$-homeomorphic to $D$.

\textbf{Step 3}. If now $D\in R$ is any set with $D\cap A=\emptyset$,
and $C\in S$ with $B\subseteq C$, we can write $D$ as a disjoint
union of trim sets and apply step~2 successively to find $E\in S$
such that $E\cap C=\emptyset$ and $E$ is $P$-homeomorphic to~$D$.
A routine back-and-forth argument now extends the $P$-homeomorphism
$\theta:A\rightarrow B$ found in step 1 to a $P$-homeomorphism $\alpha:X\rightarrow Y$,
as required.
\end{proof}
\begin{cor}
\label{Finite case} Let $[P,L,f]$ be a finite extended PO system. 

Then there is an $\omega$-Stone space $W$ that admits a complete
trim {[}$P,L,f]$-partition, unique up to $P$-homeomorphism, and
$W$ will be compact iff $L=P$.
\end{cor}

\begin{proof}
This is immediate from Theorem~\ref{uniqueness thm}, as $L\subseteq P_{\varDelta}=P$
and has a finite foundation, any $[P,L,f]$-partition will be bounded,
and by STP3 a finite trim partition is complete.
\end{proof}
\begin{rem}
Corollary~\ref{Finite case} for compact $W$ follows from the work
of Pierce~\cite[Corollary 4.7]{Pierce}.
\end{rem}

\begin{cor}
\label{Trimuniqueness} Let $[P,L,f]$ be a countable extended PO
system such that $L$ has a finite ceiling. 

Then there is an $\omega$-Stone space $W$ with a trim $[P,L,f]$-partition
$\mathscr{X}$, unique up $P$-homeomorphism. Moreover $\mathscr{X}$
will be bounded, and will be complete iff $P$ has the ACC.
\end{cor}

\begin{proof}
The existence of $W$ and a trim $[P,L,f]$-partition $\{X_{p}\mid p\in P\}^{*}$
follows from Theorem~\ref{existence construction}, taking $L^{b}=L$,
and the ``complete iff ACC'' assertion from Theorem~\ref{Trim nec condition}. 

Let $G\subseteq L$ be a finite ceiling for $L$. By combining the
finite foundations for each element of $G$, noting that $G\subseteq P_{\Delta}$,
we will obtain a finite foundation for $L$. Moreover, by Remark~\ref{rem:Existence remark}
any trim $[P,L,f]$-partition is bounded. The uniqueness of a Stone
space with a trim $[P,L,f]$-partition now follows from Theorem~\ref{uniqueness thm}.
\end{proof}
For the next Corollary, if $P$ is a PO system we will call a trim
partition $\mathscr{X}$ of a Stone space a \emph{unitary trim $P$-partition
}if $\mathscr{X}$ is a trim $[P,L,f]$-partition for some $L\supseteq P_{\min}^{d}$,
with $f(p)=1$ for $p\in P_{\min}^{d}$. 
\begin{cor}
\label{No min elts corollary}Let $P$ be a countable PO system. 

(i) If $P_{\Delta}=P_{\min}^{d}$ and $P_{\Delta}$ is finite (in
particular, if $P$ has no minimal elements), then up to $P$-homeomorphism
there is a unique $\omega$-Stone space with a unitary trim $P$-partition;

(ii) if $P_{\Delta}\neq P_{\min}^{d}$, then up to $P$-homeomorphism
there are multiple $\omega$-Stone spaces admitting a unitary trim
$P$-partition.
\end{cor}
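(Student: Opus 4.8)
The plan is to deduce both parts from the preceding uniqueness and existence machinery, noting first that for a trim $P$-partition we always have $\widehat{P}=P$ (every point is clean), so $P_\Delta$ is precisely the set of $p$ for which $\overline{X_p}$ can be arranged to be compact. For part~(i), suppose $P_\Delta = I\cap P_{\min}$ and $P_\Delta$ is finite. Take $Q = P_\Delta = I\cap P_{\min}$: this $Q$ is a lower subset of $P_\Delta$ (indeed $Q=P_\Delta$), it is finite so $Q_{\max}$ is finite and $Q$ has a finite foundation (namely $Q$ itself), and $I\cap P_{\min}\subseteq Q$ holds by hypothesis. Corollary~\ref{Trimuniqueness} then gives an $\omega$-Stone space $W$ with an $I$-isolated complete trim $P$-partition, unique up to $P$-homeomorphism, such that $\overline{X_q}$ is compact iff $q\in Q$. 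So existence is clear; the work is to show that \emph{every} $\omega$-Stone space $W'$ with a trim $I$-isolated $P$-partition $\mathscr{X}'=\{X'_p\}^*$ is $P$-homeomorphic to $W$. First, since $P$ has a finite foundation (it equals $P_\Delta$, which is finite and, by minimality, a foundation for itself), Theorem~\ref{Existence theorem}(iv) and Proposition~\ref{semi-trim closure}(vi) --- or directly the ACC, which holds because $P_\Delta$ finite forces the ACC on $P$ only if $P=P_\Delta$; more carefully, $P_\Delta$ finite does not by itself give ACC, but $P_\Delta = I\cap P_{\min}$ being the \emph{whole} of $P_\Delta$ together with finiteness is what we exploit --- we argue that $\mathscr{X}'$ is automatically complete and that $\overline{X'_p}$ is compact exactly for $p\in P_\Delta$. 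For the compactness claim: if $p\notin P_\Delta$ then $\overline{X'_p}$ is non-compact by Proposition~\ref{Compact means FF}(iii); if $p\in P_\Delta = I\cap P_{\min}$, then $p$ is minimal so $|X'_p|=1$ and $\overline{X'_p}=\{X'_p\}$ is a single point, hence compact. Thus $\overline{X'_p}$ is compact iff $p\in Q$, and the uniqueness clause of Corollary~\ref{Trimuniqueness} applies to give $W'\cong_P W$.

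For part~(ii), suppose $P_\Delta \neq I\cap P_{\min}$. Since $I\cap P_{\min}\subseteq P_\Delta$ always (a singleton from a minimal element trivially has the finite foundation $\{p\}$), the hypothesis means there is some $p^*\in P_\Delta$ with $p^*\notin I\cap P_{\min}$. I would exhibit two trim $I$-isolated $P$-partitions that are not $P$-homeomorphic by playing off the compactness of $\overline{X_{p^*}}$. Apply Theorem~\ref{existence construction} twice with the same $I$ and the same $P^\infty$--part structure except: in the first construction put $p^*$ (and its lower cone) into $P^b$, so that $\overline{X_{p^*}}$ is compact; in the second, arrange $p^*\in P^\infty$, so that $\overline{X_{p^*}}$ is non-compact. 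We need the side conditions of Theorem~\ref{existence construction} to be satisfiable: we need $I\cap P_{\min}\cap P^\infty = 0$, which is fine as long as we only move into $P^\infty$ elements not in $I\cap P_{\min}$ --- and $p^*\notin I\cap P_{\min}$ by choice; and we need $P^b$, $P^b\cup P^u$ to be lower subsets of $P_\Delta$. Since $P_\Delta$ is a lower subset of $P$, taking $P^b = P_\Delta$, $P^u=0$, $P^\infty = P - P_\Delta$ gives the first partition; for the second, take $P^b = \{q\in P_\Delta \mid q\not\geqslant p^*\}$ (a lower subset of $P_\Delta$ provided $p^*$'s upper set within $P_\Delta$ behaves, which we can ensure by instead taking $P^b = P_\Delta - U$ where $U=\{q\in P_\Delta\mid q\geqslant p^*\}$ is an upper subset of $P_\Delta$, making $P^b$ lower), $P^u=0$, $P^\infty = P - P^b$. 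Both give $I$-isolated trim $P$-partitions, but in the first $\overline{X_{p^*}}$ is compact and in the second it is not, so by Proposition~\ref{Compact means FF}(iii) or directly, no $P$-homeomorphism can exist between them (a $P$-homeomorphism carries $X_{p^*}$ to $X_{p^*}$ and hence $\overline{X_{p^*}}$ to $\overline{X_{p^*}}$, preserving compactness).

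The main obstacle I anticipate is the bookkeeping in part~(i): verifying that an \emph{arbitrary} trim $I$-isolated $P$-partition necessarily has $\overline{X'_p}$ compact precisely on $Q=I\cap P_{\min}$ and is complete, so that Corollary~\ref{Trimuniqueness}'s uniqueness hypothesis is met. The subtle point is that the uniqueness statement there is conditioned on the partition having exactly the prescribed compactness profile, so one must show this profile is forced, not chosen --- which, as sketched, follows from Proposition~\ref{Compact means FF}(iii) together with the fact that minimal-element classes are singletons under $I$-isolation, leaving nothing free. A secondary point worth a sentence is why $P$ has the ACC (needed for completeness to be automatic): if $P_\Delta = I\cap P_{\min}$ is finite, any strictly ascending chain $p_1<p_2<\cdots$ would force all $p_i\notin P_\Delta$ for $i\geqslant 2$ while the chain below $p_1$ lies in $P_\Delta$; but then $\overline{X_{p_1}}$ would need to be... actually the cleanest route is: an infinite ascending chain in $P$ contradicts $\widehat P = P$ being compatible with ACC only via Theorem~\ref{Trim nec condition}(i) \emph{if} the partition is complete, so instead I would simply observe that completeness of $\mathscr{X}'$ follows because $W'$ is then compact (by the Corollary following Theorem~\ref{Existence theorem}, $P$ having a finite foundation gives a compact Stone space, and uniqueness forces $W'$ compact too) --- or, most directly, invoke that the construction in Theorem~\ref{uniqueness thm} with $Q=P_\Delta$ and the genuine finite foundation already delivers the complete partition, and any trim $I$-isolated $P$-partition matching its compactness profile is $P$-homeomorphic to it by that theorem, with the profile being forced as above.
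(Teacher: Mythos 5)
Part (ii) of your proposal is correct and is essentially the paper's argument: two applications of Theorem~\ref{existence construction} with $P^{u}=0$ and two different lower subsets of $P_{\Delta}$ as $P^{b}$, separated at some $p^{*}\in P_{\Delta}-(I\cap P_{\min})$, distinguished by compactness of $\overline{X_{p^{*}}}$ (the paper takes $P^{b}=I\cap P_{\min}$ versus $P^{b}=P_{\Delta}$, you take $P_{\Delta}$ versus $P_{\Delta}$ minus the upper cone of $p^{*}$; both satisfy the hypotheses, and your check that no element of $I\cap P_{\min}$ lies above $p^{*}$ is the right one).

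Part (i), however, contains a genuine error in its main line. Corollary~\ref{Trimuniqueness} is not applicable: it requires $P$ to satisfy the ACC and concerns \emph{complete} trim partitions, and neither is available under the hypothesis $P_{\Delta}=I\cap P_{\min}$ finite --- the flagship case is $P$ with no minimal elements (e.g.\ $P=\mathbb{Q}$), where $P_{\Delta}=0$, the ACC fails, and by Theorem~\ref{Trim nec condition}(i) no complete trim $P$-partition exists at all. Your subsequent claim that $P$ has a finite foundation (namely $P_{\Delta}$) and hence that $W'$ is compact is false for the same reason: a finite foundation can be taken inside $P_{\min}$ (Lemma~\ref{Finfound}), so a poset without minimal elements has none, and even with $P_{\Delta}\neq0$ the set $P_{\Delta}$ need not lie below every element of $P$. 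The whole completeness/ACC discussion is a red herring: Theorem~\ref{uniqueness thm} is stated for (possibly non-complete) trim $P$-partitions $\{X_{p}\}^{*}$, so nothing about completeness needs to be forced. The correct argument --- which you do reach, but only as the final ``most directly'' alternative, and still decorated with the false phrase ``delivers the complete partition'' --- is the paper's: set $Q=P_{\Delta}=I\cap P_{\min}$, a finite lower set (hence with finite foundation), and observe that \emph{every} trim $I$-isolated $P$-partition automatically satisfies hypotheses (i)--(iii) of Theorem~\ref{uniqueness thm} for this $Q$: condition (iii) is given; condition (i) holds because each $X_{q}$ with $q\in I\cap P_{\min}$ is a singleton, so $\bigcup_{q\in Q}X_{q}$ is finite; condition (ii) holds because $\overline{X_{p}}$ compact would force $p\in P_{\Delta}=Q$ by Proposition~\ref{Compact means FF}(iii). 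Uniqueness (and existence) then follow in one step from that theorem. If you delete the Corollary~\ref{Trimuniqueness} route and the compactness-of-$W'$ detour and promote your last sentence (with the forcing argument you already sketched) to the proof, part (i) becomes correct and coincides with the paper's.
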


\begin{proof}
(i) Write $Q=P_{\Delta}$. The conditions mean that $\overline{X_{p}}$
is compact (indeed, singleton) iff $p\in Q$ (Proposition~\ref{Compact means FF}),
so $X_{Q}$ is finite and hence compact, and the partition is a bounded
trim $[P,Q,f]$-partition. Hence by Theorem~\ref{uniqueness thm}
there is a unique $\omega$-Stone space with a trim $[P,Q,f]$-partition.

(ii) If $P_{\Delta}\neq P_{\min}^{d}$, apply Theorem~\ref{existence construction}
first with $L=L^{b}=P_{\min}^{d}$ and second with $L=L^{b}=P_{\Delta}$,
taking $L^{u}=\emptyset$ and noting that $P_{\min}^{d}\subseteq P_{\Delta}$,
to obtain two $\omega$-Stone spaces with non-$P$-homeomorphic trim
$P$-partitions.
\end{proof}
\begin{rem}
The proof of Theorem~\ref{uniqueness thm} can be adapted to show
that uniqueness also applies in the previous Corollary when $P_{\Delta}=P_{\min}^{d}$
and $P_{\Delta}$ is infinite.
\end{rem}

The next Corollary shows that automorphisms of the extended PO system
can be extended to $P$-homeomorphisms of the Stone space.
\begin{cor}
Let $[P,L,f]$ be a countable extended PO system and $\mathscr{X}=\{X_{p}\mid p\in P\}^{*}$
a bounded trim $[P,L,f]$-partition of the $\omega$-Stone space $W$. 

Let $\theta$ be a $[P,L,f]$-automorphism. Then there is a $P$-homeomorphism
$\beta$ of $W$ such that $X_{p}\beta=X_{p\theta}$ for all $p\in P$.
\end{cor}

\begin{proof}
Let $R$ be the ring of compact opens in $W$ and let $Y_{p}=X_{p\theta}$
for each $p\in P$. It is easy to see that $\mathscr{Y}=\{Y_{p}\mid p\in P\}^{*}$
is also a bounded trim $[P,L,f]$-partition of $W$; in particular,
for $A\in R$, $A\cap X_{p\theta}=A\cap Y_{p}$, and $\Trim_{p\theta}(\mathscr{X})=\Trim_{p}(\mathscr{Y})$,
as $A\cap X_{p\theta}$ is a singleton iff $A\cap Y_{p}$ is. So by
Theorem~\ref{uniqueness thm} there is a $P$-homeomorphism $\beta$
of $W$ such that $X_{p}\beta=X_{p\theta}$.
\end{proof}
We now apply previous results to generate two interesting examples.
\begin{cor}
Let $P$ be a PO system whose underlying poset is the (unique) homogeneous
poset which embeds any finite poset, and let $W$ be the Cantor set
minus a point. 

Then there is a trim $P$-partition of $W$, unique up to $P$-homeomorphism.

In particular, there are unique trim $P$-partitions of $W$ for which
each partition element (a) has no isolated points, or (b) is discrete.
\end{cor}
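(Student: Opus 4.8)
The plan is to exploit the fact that the homogeneous poset $P$ has no minimal or maximal elements and satisfies $P_{\Delta}=\emptyset$, and then to apply Theorem~\ref{uniqueness thm} with the trivial choice $Q=\emptyset$.

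First I would record the order-theoretic facts about $P$. Since $P$ embeds every finite poset and, by homogeneity, any element of $P$ is the image of any other under an automorphism, for each $p\in P$ there exist $q,r\in P$ with $q<p<r$ (embed a $3$-chain, then move its middle point onto $p$); hence $P_{\min}=P_{\max}=\emptyset$. By Lemma~\ref{Finfound}, if a subset $S$ of $P$ had a finite foundation $F$ then $F\cap P_{\min}=\emptyset$ would also be a finite foundation of $S$, which is impossible whenever $S\neq\emptyset$; so no non-empty subset of $P$ has a finite foundation. In particular $P_{\Delta}=\emptyset$ and $P$ itself has no finite foundation.

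Next, for existence and uniqueness together, I would apply Theorem~\ref{uniqueness thm} with $Q=\emptyset$ — a lower subset of $P$ with finite foundation $\emptyset$ — and the given $I$, noting that $I\cap P_{\min}=\emptyset\subseteq Q$. This produces an $\omega$-Stone space $W'$ with a trim $I$-isolated $P$-partition $\mathscr{X}$, unique up to $P$-homeomorphism, in which $\overline{X_p}$ is non-compact for every $p\notin Q$, i.e.\ for every $p\in P$ (condition (i) of that theorem being vacuous here). To identify $W'$ I would appeal to the remark following Theorem~\ref{existence construction}: the construction underlying Theorem~\ref{uniqueness thm} uses $P^{b}=Q=\emptyset$, $P^{u}=0$, $P^{\infty}=P$, so its Boolean ring is atomless (as $I\cap P_{\max}=\emptyset$) and its Stone space is non-compact (as $P$ has no finite foundation and $P^{\infty}=P\neq\emptyset$); hence $W'$ is the unique second countable atomless non-compact Stone space, namely the Cantor set minus a point. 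Transporting $\mathscr{X}$ along a homeomorphism of $W'$ with $W$ then gives the required partition of $W$.

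Finally I would upgrade uniqueness to the stated form. For this particular $P$, every trim $I$-isolated $P$-partition of every $\omega$-Stone space automatically satisfies conditions (i) and (ii) of Theorem~\ref{uniqueness thm} with $Q=\emptyset$: (i) holds trivially, and (ii) holds because $\overline{X_p}$ compact would force $p\in P_{\Delta}=\emptyset$ by Proposition~\ref{Compact means FF}(iii). Hence all such partitions are mutually $P$-homeomorphic, so the trim $I$-isolated $P$-partition of $W$ is unique up to $P$-homeomorphism. The two displayed special cases then follow by taking $I=\emptyset$ (every $X_p$ has no isolated points, the remaining clauses of $I$-isolation being vacuous) and $I=P$ (every $X_p$ is discrete, with the singleton requirement imposed only on $I\cap P_{\min}=\emptyset$). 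I expect the main obstacle to be the correct identification of $W'$ as the Cantor set minus a point together with the verification that the non-compactness hypothesis (ii) of the uniqueness theorem is automatic here; both of these reduce to the facts $P_{\Delta}=\emptyset$ and that $P$ has no finite foundation.
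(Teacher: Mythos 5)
Your proposal is correct and follows essentially the same route as the paper: existence comes from the construction with $P^{b}=Q=0$, $P^{u}=0$, $P^{\infty}=P$ (the space being atomless and non-compact since $P_{\max}=0$ and $P$ has no finite foundation, hence the Cantor set minus a point), and uniqueness is exactly the paper's Corollary~\ref{No min elts corollary}(i), which you re-derive inline by noting that conditions (i) and (ii) of Theorem~\ref{uniqueness thm} with $Q=P_{\Delta}=0$ hold automatically via Proposition~\ref{Compact means FF}. The only difference is presentational (you inline that corollary and also verify $P_{\min}=P_{\max}=0$ and $P_{\Delta}=0$ explicitly), so no gap to report.
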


\begin{proof}
Existence follows from Theorem~\ref{existence construction} with
$L=\emptyset$, noting that $P$ has no minimal elements and $P_{\Delta}=\emptyset$.
The $\omega$-Stone space will be non-compact, and without isolated
points as $P_{\max}=\emptyset$ (Proposition~\ref{Isolated propn}).
So the underlying Boolean ring will be countable atomless without
identity, and the Stone space will be the Cantor set minus a point.
Uniqueness follows from Corollary~\ref{No min elts corollary}(i).
For the final assertion, take $P^{d}=\emptyset$ or $P^{d}=P$.
\end{proof}
\begin{cor}
Let $\mathbb{\mathrm{\mathit{P}=}Q\cap\mathrm{[0,1]}}$ and $\mathbb{\mathrm{[0,1]}}$
be the totally ordered subsets of $\mathbb{Q}$ and $\mathbb{R}$
respectively consisting of points in the unit interval. Then:

(i) there is a trim $P$-partition of the Cantor set such that each
partition element has no isolated points, unique up to $P$-homeomorphism;

(ii) there is a complete semi-trim partition $\{X_{r}\mid r\in[0,1]\}$
of the Cantor set $W$, with $\overline{X_{r}}=\bigcup_{s\leqslant r}X_{s}\text{ for all }r\in[0,1]$
and $X_{0}$ homeomorphic to $W$;

(iii) there is a complete semi-trim $[0,1]$-partition of the unit
interval.
\end{cor}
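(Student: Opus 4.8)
Throughout put $P=\mathbb{Q}\cap[0,1]$. This is a countable chain with least element $0$ and greatest element $1$; the singleton $\{0\}$ is a finite foundation for every singleton, so $P=P_{\Delta}$; and the chain-completion $\overline{P}$ is order-isomorphic to $[0,1]$ (its elements are the down-sets $\{q\leqslant p\}$ for rational $p\in[0,1]$ together with the down-sets of rationals strictly below a given irrational). The plan is to obtain (i) directly from the existence and uniqueness theorems, (ii) by completing the partition of (i), and (iii) by pushing the partition of (ii) forward along the Cantor function.

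\emph{(i).} Apply Theorem~\ref{existence construction} with $I=P^{u}=P^{\infty}=0$ and $P^{b}=P_{\Delta}=P$; all hypotheses hold trivially. The theorem yields a trim $P$-partition $\mathscr{X}$ of the Stone space $W$ of a countable Boolean ring with every $X_{p}$ perfect and every $\overline{X_{p}}$ compact, and with $W$ compact (since $P$ has a finite foundation, $P^{\infty}=0$, and $P^{u}=0$ is finite); since $P_{\max}=\{1\}$ and $X_{1}$ is perfect, $W$ has no isolated points (Proposition~\ref{Isolated propn}), so $W$ is the Cantor set. For uniqueness, apply Theorem~\ref{uniqueness thm} with $Q=P$ (a lower subset of $P$ with finite foundation $\{0\}$) and $I=0$: its conditions (i)--(iii) then reduce to saying that the ambient space is compact (true, as $\bigcup_{q\in P}X_{q}$ is dense in the compact $W$), that there is no further constraint, and that the partition is $0$-isolated, which is exactly the present situation. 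Hence any two trim $P$-partitions of the Cantor set with all elements perfect are $P$-homeomorphic.

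\emph{(ii).} Let $\overline{\mathscr{X}}$ be the completion of the trim $P$-partition $\mathscr{X}$ of the Cantor set $W$ from (i) (Corollary~\ref{Completion corollary}). Since $\mathscr{X}$ is trim we have $\widehat{P}=P$, so $\overrightarrow{P}=\overline{P}\cong[0,1]$, and $\overline{\mathscr{X}}$ is a complete semi-trim --- hence, by Proposition~\ref{antichains propn}(iv), good --- $[0,1]$-partition $\{X_{r}\mid r\in[0,1]\}$ of the Cantor set, so $\overline{X_{r}}=\bigcup_{s\leqslant r}X_{s}$ for every $r$. Because $0$ is least, $\upsilon(w)=0$ forces $w$ to have a neighbourhood base of $0$-trim sets, so $X_{0}$ coincides with the level-$0$ element of $\mathscr{X}$; this set is non-empty, closed in $W$ (by goodness $\overline{X_{0}}=X_{0}$) hence compact, second countable and totally disconnected, and it has no isolated points (as $\mathscr{X}$ is $0$-isolated), so by Brouwer's characterisation of the Cantor set $X_{0}$ is homeomorphic to the Cantor set.

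\emph{(iii).} Embed the Cantor set $W$ of (ii) into $[0,1]$ as a Cantor set, and let $\psi\colon W\to[0,1]$ be the induced monotone continuous surjection (a restriction of the Cantor--Lebesgue function); being a map from a compact space to a Hausdorff space it is closed, and its only non-degenerate fibres are the endpoint pairs $\{a,b\}$ of the complementary intervals. Suppose the embedding is chosen so that $\tau(a)=\tau(b)$ for every such pair. Then every fibre lies inside a single $X_{r}$, so the images $X'_{r}:=\psi(X_{r})$ partition $[0,1]$, and since $\psi$ is closed $\overline{X'_{r}}=\psi(\overline{X_{r}})=\psi\bigl(\bigcup_{s\leqslant r}X_{s}\bigr)=\bigcup_{s\leqslant r}X'_{s}$, so $\{X'_{r}\}$ is a good $[0,1]$-partition of $[0,1]$; a routine check using semi-trimness of $\{X_{r}\}$ and monotonicity of $\psi$ (interiors of images of suitable clopen order-intervals of $W$ are the required trim sets) shows it is in fact complete semi-trim. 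The substantive point --- and the main obstacle --- is to realise such an embedding, i.e.\ to put a linear order on $W$ inducing its topology with all jump-pairs $\tau$-homogeneous. This is done on the tree of trim sets built in the proof of Theorem~\ref{existence construction} (listing $0$ first in the enumeration of $P$, so that the tree has a single root and no stray components): one orders the children of each node and fixes, for each node, a ``leftmost'' and ``rightmost'' child together with a target label $\geqslant$ its type, by a back-and-forth guaranteeing that any two adjacent branches have equal limiting type; here it is essential that $P$ is a chain, so that $\max\{t(c),t(c')\}\in P$ for any two siblings $c,c'$. Passing to the completion and then to the quotient by $\psi$ gives the required $[0,1]$-partition of the unit interval.
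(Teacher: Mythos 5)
Parts (i) and (ii) are correct and follow the paper's own route: existence and uniqueness from Theorem~\ref{uniqueness thm} with $Q=P$ and $I=0$ (the detour through Theorem~\ref{existence construction} is harmless but redundant, since Theorem~\ref{uniqueness thm} already supplies existence), completion via Corollary~\ref{Completion corollary} using $\widehat{P}=P$ and $\overline{P}\cong[0,1]$, and the identification of $X_{0}$ as a compact, perfect, totally disconnected, second countable set; your appeal to Brouwer's characterisation is an acceptable substitute for the paper's remark that the compact open subsets of $X_{0}$ form a countable atomless Boolean algebra with identity.

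Part (iii) is where you diverge, and here you have taken a far harder road than necessary. The paper's device is a one-liner: embed the Cantor set $W$ of (ii) into $[0,1]$ \emph{arbitrarily} and replace $X_{1}$ by $X_{1}\cup([0,1]-W)$. Since $1$ is the top of $[0,1]$, every open subset of $[0,1]$ meeting $[0,1]-W$ only adds the value $1$ to its type, $\overline{X_{1}\cup([0,1]-W)}=[0,1]$, and closures of the other $X_{r}$ are unchanged because $W$ is closed in $[0,1]$; no quotient map and no control of the embedding is needed. Your route --- pushing the partition forward along the Cantor--Lebesgue map after arranging that every jump pair is $\tau$-homogeneous --- would, if completed, yield a genuinely different (and in some ways stronger) object, namely a $[0,1]$-partition of the interval that is a continuous monotone image of the one on the Cantor set. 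But as written it has a real gap: the existence of a linear order on $W$ inducing its topology with all jump pairs $\tau$-homogeneous is the entire content of the argument, and you only gesture at it (``a back-and-forth guaranteeing that any two adjacent branches have equal limiting type''). One must specify how the countably many, densely situated jump pairs are enumerated and committed without conflict, check that the committed rightmost/leftmost descendant branches never interfere, and then verify that saturated trim neighbourhoods push forward to open trim sets and that fullness survives the quotient; none of this is ``routine'' in the sense of following from results already proved. The claim is plausibly completable, but in its current state part (iii) is not proved, and the effort is disproportionate given the elementary alternative above.
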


\begin{proof}
(i) Let $\widetilde{P}$ be the PO system with underlying poset $P$
such that $P^{d}=\emptyset$. $\widetilde{P}$ has a finite foundation
$\{0\}$, so by Theorem~\ref{uniqueness thm} (taking $L=\widetilde{P}$
and noting that $L_{\min}^{d}=\emptyset$), there is a Stone space
$W$ of a countable Boolean ring $R$ admitting a trim $\widetilde{P}$-partition
such that $W$ is compact and each partition element has no isolated
points, which is unique up to $P$-homeomorphism. $R$ will therefore
be atomless with identity, and so $W$ will be homeomorphic to the
Cantor set. 

Part (ii) follows from part (i) and Corollary~\ref{Completion corollary},
as $P^{d}=\emptyset$ and $[0,1]$ is the chain completion of $P$
and $P=\widehat{P}$. As $X_{0}$ is closed and compact with no isolated
points, its compact open subsets form an atomless Boolean ring, and
hence $X_{0}$ is homeomorphic to the Cantor set.

For (iii), let $\{X_{r}\mid r\in[0,1]\}$ be the partition of the
Cantor set~$W$ from (ii). Embed $W$ as a closed subset of $[0,1]$,
and replace $X_{1}$ with $X_{1}\cup([0,1]-W)$, so that $\overline{X_{1}}=[0,1]$.
\end{proof}
\begin{rem}
By a similar argument, we can find a complete semi-trim $\mathbb{R}$-partition
of the Cantor set minus a point.
\end{rem}

\section{\label{sec:Closure-algebras-and}Closure algebras and the Cantor
set}

In this final section we look at partitions associated with closure
algebras that are generated by a single open set within a topological
space.

We recall that a \emph{closure algebra }is a Boolean algebra $S$
together with an additional closure operator $^{C}$ such that $x\subseteq x^{C}$,
$x^{CC}=x^{C}$, $(x\cup y)^{C}=x^{C}\cup y^{C}$ (all $x,y\in S$),
and $0^{C}=0$. We write $\overline{x}$ for $x^{C}$. (For our purposes,
it is convenient to work with closure algebras rather than interior
algebras.) 

For any topological space $W$, we can regard $2^{W}$ as a closure
algebra with the usual set operations of $+$ and $\cap$, together
with the closure operator which maps any subset $A$ of $W$ to $\overline{A}$,
the smallest closed set containing $A$.
\begin{notation*}
For a poset $P$, let $\mathscr{E\mathrm{(}\mathit{P\mathrm{)}}}$
be the closure sub-algebra of $2^{P}$ generated by the closed (i.e.\ lower)
subsets of $P$, where $P$ is given the usual poset topology, with
closure operator $Q^{C}=Q_{\downarrow}$ for $Q\subseteq P$.

Let $P(\infty)$ denote the poset with elements $\{p_{k}\mid k\geqslant0\}$,
and relations $p_{k}\lneqq p_{j}$ iff $k\geqslant j+2$. Let $\widetilde{P(\infty)}=P(\infty)\cup\{\bot\}$,
having the relations of $P(\infty)$ together with extra relations
$\bot\lneqq p_{k}$ for all $k\geqslant0$ for the new minimum element
$\bot$. It can be readily verified that the Hasse diagram of $P(\infty)$
is the well-known Rieger-Nishimura ladder.

Let $J=\mathbb{N}\times\{0,2\}$, and for $m\geqslant0$ let $P(m,0)=\{p_{0},p_{1},\ldots,p_{m}\}$
and $P(m,2)=P(m,0)\cup\{m+2\}$, each with the inherited relations
from $P(\infty)$.

If $A$ is a subset of the topological space $W$, let $A_{\infty}=W-\bigcup\{B\mid B\in\At(\mathscr{C})\}$,
where $\mathscr{C}$ is the closure sub-algebra of $2^{W}$ generated
by $A$ and $\At(\mathscr{C})$ denotes the atoms of $\mathscr{C}$. 
\end{notation*}
\begin{lem}
\label{1gen lemma}Let the poset $P$ be equal to either $P(\infty)$
or $P(m,n)$ for some $(m,n)\in J$. Then the closure algebra $\mathscr{E\mathrm{(}\mathit{P\mathrm{)}}}$
is generated by the single open set $\{p_{0}\}$, and consists of
all finite and cofinite subsets of $P$.
\end{lem}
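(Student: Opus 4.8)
The plan is to prove the two assertions simultaneously by showing that the three collections
\[
\mathscr{F}:=\{\text{finite and cofinite subsets of }P\},\qquad \mathscr{E}(P),\qquad \langle\{p_0\}\rangle
\]
all coincide, where $\langle\{p_0\}\rangle$ denotes the closure subalgebra of $2^{P}$ generated by the single set $\{p_0\}$. Concretely I would establish the chain of inclusions $\mathscr{F}\subseteq\langle\{p_0\}\rangle\subseteq\mathscr{E}(P)\subseteq\mathscr{F}$, which forces equality throughout and is exactly the statement of the lemma.

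First I would record two elementary facts, noting that when $P=P(m,0)$ or $P=P(m,2)$ the poset is finite, so $\mathscr{F}=2^{P}$ and both facts are trivial; the content is in the case $P=P(\infty)$. Fact (a): $\mathscr{F}$ is a closure subalgebra of $2^{P}$. It is visibly a Boolean subalgebra, and it is closed under the closure operator because for a \emph{finite} $Q$ we have $Q^{C}=\bigcup_{q\in Q}\{r\in P\mid r\leqslant q\}$, a finite union of the down-sets $\overline{\{p_k\}}=\{p_k\}\cup\{p_j\mid j\geqslant k+2\}$, each of which is cofinite, so $Q^{C}$ is cofinite; and for cofinite $Q$ we have $Q\subseteq Q^{C}$, hence $Q^{C}$ is cofinite. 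Fact (b): every lower (i.e.\ closed) subset of $P$ is finite or cofinite, since a nonempty lower set containing $p_k$ must contain the whole down-set $\{p_j\mid j\geqslant k+2\}$, which is cofinite. Facts (a) and (b) together give $\mathscr{E}(P)\subseteq\mathscr{F}$, because $\mathscr{E}(P)$ is the smallest closure subalgebra containing all lower sets.

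Next, $\{p_0\}$ is open (it is an up-set, as $p_0$ is maximal) and its complement $P-\{p_0\}$ is a lower set, so $\{p_0\}\in\mathscr{E}(P)$, whence $\langle\{p_0\}\rangle\subseteq\mathscr{E}(P)$. The remaining and main step is $\mathscr{F}\subseteq\langle\{p_0\}\rangle$, for which it suffices to show that every singleton $\{p_k\}$ lies in $\langle\{p_0\}\rangle$ (then so does every finite, hence every cofinite, subset). I would do this by induction, using that $\overline{\{p_k\}}=\{p_k\}\cup\{p_j\mid j\geqslant k+2\}$ and therefore $P-\overline{\{p_k\}}=\{p_0,\ldots,p_{k-1},p_{k+1}\}$: starting from $\{p_0\}$ one obtains $\{p_1\}=P-\overline{\{p_0\}}$, and in general $\{p_{k+1}\}=\bigl(P-\overline{\{p_k\}}\bigr)-\{p_0\}-\cdots-\{p_{k-1}\}$, which lies in $\langle\{p_0\}\rangle$ by the inductive hypothesis; in the finite cases the induction runs up to $k=m$, and for $P(m,2)$ the extra bottom point is recovered as $\{p_{m+2}\}=P-(\{p_0\}\cup\cdots\cup\{p_m\})$. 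Chaining the three inclusions yields $\mathscr{E}(P)=\mathscr{F}=\langle\{p_0\}\rangle$. I expect the only real friction to be bookkeeping the three posets $P(\infty)$, $P(m,0)$, $P(m,2)$ uniformly — in particular verifying that the displayed formula for $\overline{\{p_k\}}$ and its complement holds in the truncated posets (where the down-set may include the added minimum and the index $k+1$ may exhaust $P$) — but this is routine once the down-sets are written out.
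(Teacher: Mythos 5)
Your proposal is correct and follows essentially the same route as the paper: one shows $\mathscr{E}(P)$ lies in the finite--cofinite algebra because every nonempty lower set is cofinite, and then generates each singleton $\{p_{k}\}$ from $\{p_{0}\}$ by induction using closures and Boolean operations. Your induction formula $\{p_{k+1}\}=\bigl(P-\overline{\{p_{k}\}}\bigr)-\{p_{0}\}-\cdots-\{p_{k-1}\}$ is just a spelled-out (and, as literally written, slightly more careful) version of the identity the paper invokes, so there is nothing genuinely different to compare.
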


\begin{proof}
For any $p\in P$, $\{q\in P\mid q\leqslant p\}$ is cofinite in $P$,
so all closed sets are cofinite in $P$, and $\mathscr{E\mathrm{(}\mathit{P\mathrm{)}}}$
viewed as a Boolean algebra is a subalgebra of the set of all finite
and cofinite subsets of $P$. The assertions now follow easily by
induction from the identity 
\[
\{p_{k+1}\}=P-\{p\in P\mid p\geqslant p_{k}\}-\{p\in P\mid p\leqslant p_{k}\}.
\]
\end{proof}
Blok~\cite{Blok} showed that there are continuously many non-isomorphic
interior (closure) algebras generated by one element. If we require
the generating element to be open, however, the picture changes dramatically:
it follows from the duality between Heyting algebras and closure algebras
generated by their open elements (\cite{McK-Tarski}) that such a
one-generator closure algebra corresponds to a quotient of the free
Heyting algebra $H$ on one generator. Now $H$ is just the Rieger-Nishimura
lattice, which is dually isomorphic to $\mathscr{E\mathrm{(}\mathit{P\mathrm{(\infty))}}}$,
and it can be verified that quotients of $H$ are dually isomorphic
to the $\mathscr{E\mathrm{(}\mathit{P\mathrm{(\mathit{m,n}))}}}$
for $(m,n)\in J$. 

The next theorem extends this by showing that a closure algebra generated
by an open set within a topological space is underpinned by a complete
trim partition of the space.
\begin{thm}
\label{Closed sets thm} Let $W$ be a topological space, $A$ a non-empty
open subset of $W$, and $\mathscr{C}$ the closure sub-algebra of
$2^{W}$ generated by $A$. Then $\mathscr{C}\cong\mathscr{E\mathit{\mathrm{(}}\mathit{P\mathit{\mathrm{)}}}}$,
where $P=P(\infty)$ or $P=P(m,n)$ for some $(m,n)\in J$, and under
this isomorphism:

(i) if $A_{\infty}=\emptyset$, $\At(\mathscr{C})$ is a complete
trim $P$-partition of $W$; 

(ii) if $A_{\infty}\neq\emptyset$, $\At(\mathscr{C})\cup\{A_{\infty}\}$
is a complete trim $\widetilde{P(\infty)}$-partition of $W$.
\end{thm}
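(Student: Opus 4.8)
The plan is to prove the abstract isomorphism $\mathscr{C}\cong\mathscr{E}(P)$ first and then read the partition directly off a fixed isomorphism. For the isomorphism I would argue exactly as in the paragraph preceding the statement. Since $\mathscr{C}$ is generated as a closure algebra by the single open set $A$, its subalgebra of open elements is a Heyting algebra generated by one element, hence a quotient of the free one-generated Heyting algebra; by Lemma~\ref{1gen lemma} such a quotient is dually isomorphic either to $\mathscr{E}(P(\infty))$ or to some $\mathscr{E}(P(m,n))$ with $(m,n)\in J$; and since a closure algebra generated by its open elements is determined up to isomorphism by its Heyting algebra of open elements (by \cite{McK-Tarski}), this forces $\mathscr{C}\cong\mathscr{E}(P)$ for the corresponding $P$. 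I would treat this step as essentially known.

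Next I would fix a closure-algebra isomorphism $\phi:\mathscr{C}\to\mathscr{E}(P)$ and set $X_p=\phi^{-1}(\{p\})$ for $p\in P$. By Lemma~\ref{1gen lemma} the atoms of $\mathscr{E}(P)$ are precisely the singletons $\{p\}$, so $\At(\mathscr{C})=\{X_p\mid p\in P\}$ is a family of non-empty pairwise disjoint subsets of $W$. If $A_\infty\neq0$ then $\mathscr{C}$ must be infinite --- a finite Boolean subalgebra of $2^W$ has its atoms covering $W$ --- so $P=P(\infty)$; in that case I would set $X_\bot=A_\infty$ and write $P'=\widetilde{P(\infty)}$, while if $A_\infty=0$ I would write $P'=P$. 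In either case $\{X_p\mid p\in P'\}^{*}$ is a complete $P'$-partition of $W$ (the $X_p$ cover $W-A_\infty$ and $X_\bot=A_\infty$).

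The crux is to verify this is a \emph{good} partition. The closure operator of $\mathscr{C}$ is the restriction of the topological closure on $W$, and $\phi$ transports it to the operator $Q\mapsto\{s\in P\mid s\leqslant q\text{ for some }q\in Q\}$ on $\mathscr{E}(P)$, so $\phi(\overline{X_p})=\{s\in P\mid s\leqslant p\}$ for each $p\in P$. I would then translate this finite-or-cofinite subset of $P$ back through $\phi$, using that a finite $Q\subseteq P$ corresponds to $\bigcup_{q\in Q}X_q$ whereas a cofinite $Q$, being the complement of a finite set, corresponds to $\bigl(\bigcup_{q\in Q}X_q\bigr)\cup A_\infty$. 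Since every principal lower set is finite when $P=P(m,n)$ (and then $A_\infty=0$) and every principal lower set $\{s\mid s\leqslant p_k\}$ is cofinite when $P=P(\infty)$, in all cases this gives $\overline{X_p}=\bigcup_{q\leqslant p}X_q$ with the union over $q\in P'$, recalling $\bot\leqslant p$ and $X_\bot=A_\infty$ when $P'=\widetilde{P(\infty)}$. The only point left is that $\overline{X_\bot}=X_\bot$, i.e.\ that $A_\infty$ is closed whenever it is non-empty; I would deduce this from $A_\infty=\bigcap_{k\geqslant0}\overline{X_{p_k}}$, which holds because each $\overline{X_{p_k}}$ contains $A_\infty$ by the previous computation, while any $w\in W-A_\infty$ lies in some $X_{p_m}$ and then $w\notin\overline{X_{p_{m+1}}}$ (since $p_m\nleqslant p_{m+1}$ in $P(\infty)$, so $X_{p_m}\cap\overline{X_{p_{m+1}}}=0$ as $X_{p_m}$ is an atom of $\mathscr{C}$ and $\overline{X_{p_{m+1}}}\in\mathscr{C}$). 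So $\{X_p\mid p\in P'\}$ is a good $P'$-partition of $W$.

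Finally I would invoke Proposition~\ref{antichains propn}(v). Each of $P(m,n)$, $P(\infty)$ and $\widetilde{P(\infty)}$ satisfies the ACC and contains no infinite antichain: in $P(\infty)$ a strictly increasing chain $p_{k_1}<p_{k_2}<\cdots$ would force $k_1>k_2>\cdots$, which is impossible, and every antichain has at most two elements, and adjoining a least element destroys neither property. Hence the good $P'$-partition found above is automatically a complete trim $P'$-partition, which is exactly the assertion (with $P'=P$ when $A_\infty=0$ and $P'=\widetilde{P(\infty)}$ when $A_\infty\neq0$). I expect the main obstacle to be the bookkeeping in the middle: $\phi$ preserves finite joins but not the join of all atoms, so $A_\infty$ can be non-empty and the cofinite elements of $\mathscr{C}$ absorb it; keeping this straight, and in particular recognising $A_\infty$ as an intersection of closed sets so that it is itself closed, is where the real work sits --- the rest is cited duality and a short order-theoretic check.
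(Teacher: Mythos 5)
Your verification that the labelled atoms form a complete trim partition is correct, but you reach the isomorphism $\mathscr{C}\cong\mathscr{E}(P)$ by a genuinely different route from the paper. The paper's proof is constructive and self-contained: starting from $A$ it builds the sets $U_n$, $V_n$, $B_n$ by the ladder recursion $U_n=W-\overline{B_{n-1}}$, $V_n=U_{n-1}\cup V_{n-1}$, $B_n=U_n-V_{n-1}$, shows the $B_n$ are exactly the atoms with $\overline{B_n}=W-B_{n+1}-\bigcup_{j\leqslant n-1}B_j$, and reads off both the isomorphism (including which $P$ can occur, via the observation that $B_{N+1}=0$ forces $B_k=0$ for $k\geqslant N+3$) and the goodness of the partition from that one closure formula, finishing, as you do, with Proposition~\ref{antichains propn}(v). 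Your version instead takes the isomorphism as given by duality and then transports everything through a fixed $\phi$; the second half of your argument (finite sets pull back to unions of atoms, cofinite sets absorb $A_{\infty}$, principal lower sets are finite or cofinite, and $A_{\infty}=\bigcap_{k}\overline{X_{p_k}}$ is closed because $p_m\nleqslant p_{m+1}$) is correct and is a clean abstract substitute for the paper's case analysis; the ACC/antichain check is also right.

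The caveat is in your first step. Lemma~\ref{1gen lemma} does not say what you cite it for: it only shows that each $\mathscr{E}(P)$ is generated by a single open element and consists of the finite and cofinite subsets of $P$; it does not classify the quotients of the Rieger--Nishimura lattice, nor show that every one-generated-by-an-open closure algebra is one of these. That classification is precisely the content the paper's preamble dismisses with ``it can be verified'' and which the theorem's constructive proof actually supplies. Likewise, the assertion that the open elements of $\mathscr{C}$ form a Heyting algebra generated by $A$, and that a closure algebra generated by its open elements is determined by that Heyting algebra, is true but needs the standard argument (every element of $\mathscr{C}$ is a finite meet of elements $\neg v\cup u$ with $u,v$ open, and the interior of such a meet is $\bigwedge(v\Rightarrow u)$), or at least a precise reference beyond the bare citation of~\cite{McK-Tarski}. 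So your proof is acceptable as an alternative provided you either import the RN-quotient classification from the literature with a proper reference or reproduce an argument of the paper's constructive type; as written, the load-bearing first step rests on a misattributed lemma and an ``essentially known'' that is exactly where the paper chose to do the work.
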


\begin{proof}
Define open sets $U_{n}$ and $V_{n}$ and sets $B_{n}$ inductively
as follows, by duality with the Rieger-Nishimura lattice (Figure~\ref{fig:The-Rieger-Nishimura-lattice}).
Let $V_{-1}=V_{0}=\emptyset$, $U_{0}=B_{0}=A$, and for $n\geqslant1$
let:

\begin{align*}
U_{n} & =W-\overline{B_{n-1}}\text{ (i.e. \ensuremath{U_{n-1}\rightarrow V_{n-2}} in Heyting algebra notation)}\\
V_{n} & =U_{n-1}\cup V_{n-1}\\
B_{n} & =U_{n}-V_{n-1}
\end{align*}
\begin{figure}
\includegraphics[bb=100bp 150bp 800bp 540bp,clip,scale=0.4]{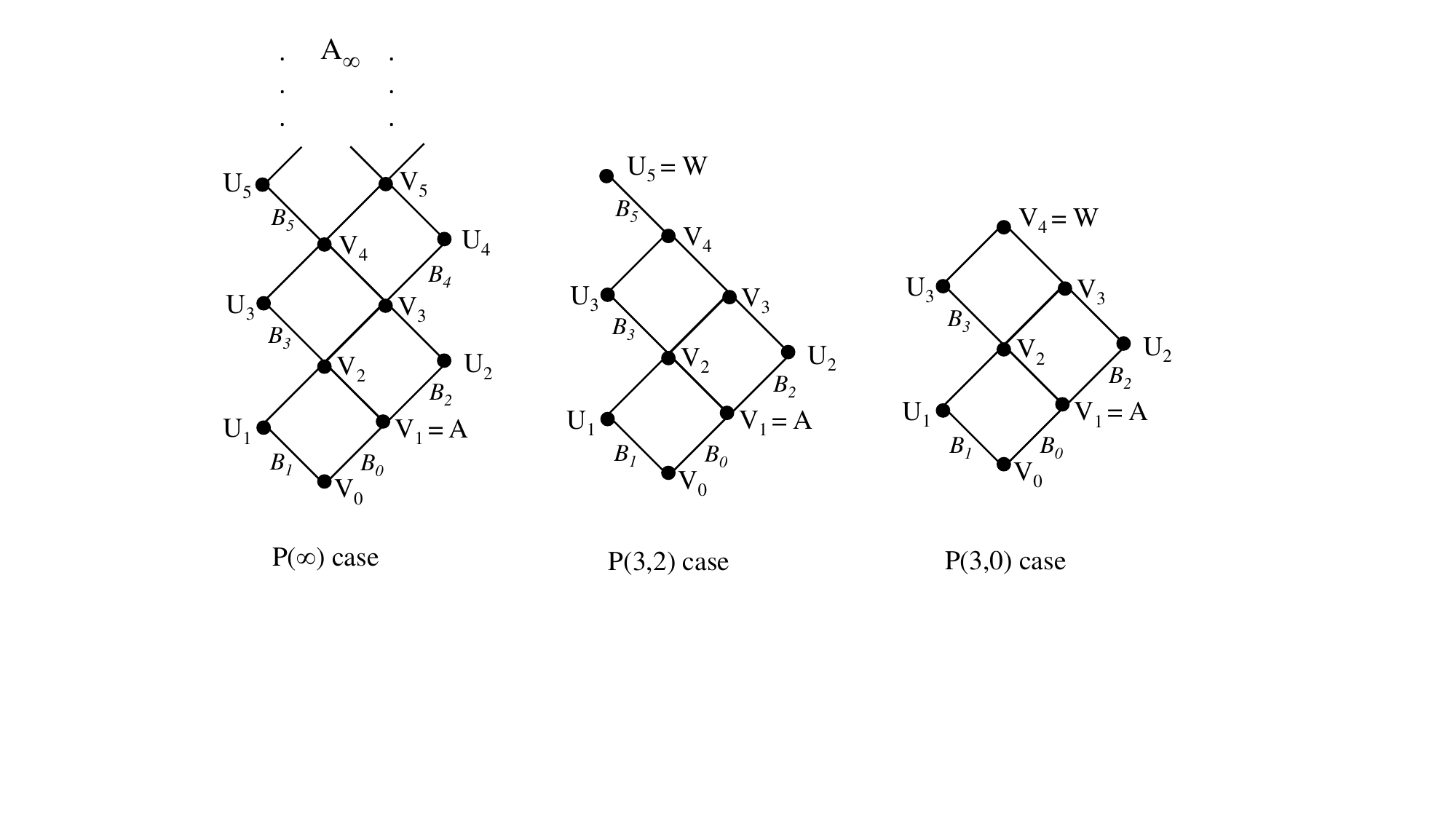}

\caption{\label{fig:The-Rieger-Nishimura-lattice}The Rieger-Nishimura lattice
for the closure algebra generated by an open set $A$}
\end{figure}

Induction yields the following, which can also be read off from Figure~\ref{fig:The-Rieger-Nishimura-lattice}:

\begin{align*}
V_{n-1} & =U_{n+1}\cap U_{n}\\
V_{n} & =U_{n+1}\cap V_{n+1}\\
B_{n} & =V_{n+1}-V_{n}
\end{align*}

It follows that the $B_{n}$ are disjoint, with

\begin{align}
U_{n} & =B_{n}\cup\bigcup_{j\leqslant n-2}B_{j}\nonumber \\
\overline{B_{n}} & =W-U_{n+1}=W-B_{n+1}-\bigcup_{j\leqslant n-1}B_{j}\label{eq:atomclosure-1}
\end{align}

Let $K=\{k\geqslant0\mid B_{k}\neq\emptyset\}$, $\mathscr{B}=\{B_{k}\mid k\in K\}$
and $P=\{p_{k}\in P(\infty)\mid k\in K\}$. We see that $\mathscr{B}=\At(\mathscr{C})$
and that elements of $\mathscr{C}$ are precisely the unions of finite
and cofinite subsets of $\mathscr{B}$. Further, the map sending each
$B_{k}$ to $p_{k}$ extends to a closure algebra isomorphism between
$\mathscr{C}$ and $\mathscr{E\mathit{\mathrm{(}}\mathit{P\mathit{\mathrm{)}}}}$,
as for $k\neq j$, $B_{k}\subseteq\overline{B_{j}}$ iff $k\geqslant j+2$
(by~(\ref{eq:atomclosure-1})) iff $p_{k}\lneqq p_{j}$, and $\mathscr{E\mathit{\mathrm{(}}\mathit{P\mathit{\mathrm{)}}}}$
consists of all finite and cofinite subsets of $P$ by Lemma~\ref{1gen lemma}. 

Now $A_{\infty}=W-\bigcup_{k\in K}B_{k}=W-\bigcup_{k\geqslant1}V_{k}$;
so $A_{\infty}$ is closed but is not an element of $\mathscr{C}$
unless it is empty. Hence $\mathscr{B}$, with the addition of $A_{\infty}$
if it is non-empty, is a partition of $W$. Assign types in $\widetilde{P(\infty)}$
to the elements of this partition via $t(B_{k})=p_{k}$ and $t(A_{\infty})=\bot$. 

Let $N=\min\{\infty,\{n\geqslant0\mid B_{n+1}=\emptyset\}\}$; we
note that $B_{0}=A\neq\emptyset$, and that if $B_{N+1}=\emptyset$,
then $B_{k}=\emptyset$ for all $k\geqslant N+3$ by~(\ref{eq:atomclosure-1}).
There are now four cases to consider. 

Case 1: $N$ is finite and $B_{N+2}=\emptyset$. Then $P=P(N,0)$,
$A_{\infty}=\emptyset$, and the atoms of $\mathscr{C}$ are $\{B_{0},B_{1},\ldots,B_{N}\}$
and form a complete $P$-partition of $W$. 

Case 2: $N$ is finite and $B_{N+2}\neq\emptyset$. Then $P=P(N,2)$,
$A_{\infty}=\emptyset$, and $\At(\mathscr{C})=\{B_{0},B_{1},\ldots,B_{N},B_{N+2}\}$,
which is a complete $P$-partition of $W$. 

Case 3: $N=\infty$ and $A_{\infty}=\emptyset$. Then $P=P(\infty)$
and the atoms of $\mathscr{C}$ are $\{B_{k}\mid k\geqslant0\}$ which
by~(\ref{eq:atomclosure-1}) form a complete $P$-partition of $W$.

Case 4: $N=\infty$ and $A_{\infty}\neq\emptyset$. The atoms of $\mathscr{C}$
are $\mathscr{B}=\{B_{k}\mid k\geqslant0\}$, and $\mathscr{B}\cup\{A_{\infty}\}$
is a complete $\widetilde{P(\infty)}$-partition of $W$, as $A_{\infty}$
is closed and $A_{\infty}\subseteq\overline{B_{k}}$ for all $k$
by~(\ref{eq:atomclosure-1}). 

Finally, we note that $\widetilde{P(\infty)}$ satisfies the ACC and
has no infinite antichains (indeed, all antichains have length at
most 2), and therefore for any subset $P$ of $\widetilde{P(\infty)}$,
a complete $P$-partition will also be a complete trim $P$-partition
by Proposition~\ref{basic properties}(iii).
\end{proof}
The previous theorem showed that the closure algebra generated by
any open subset of a topological space is isomorphic to $\mathscr{E\mathit{\mathrm{(}}\mathit{P\mathit{\mathrm{)}}}}$
for a suitable subset $P$ of $P(\infty)$. Restricting to the Cantor
set, we now show that each possible suitable $P$ can arise, and that
we can arrange for individual elements of the resulting partition
to be discrete or non-discrete at will. In the following Theorem,
let $\widetilde{P}=P$ in the case when $P$ is a finite poset. 
\begin{thm}
Let $W$ be the Cantor set and let $P=P(\infty)$ or $P=P(m,n)$ for
some $(m,n)\in J$. Let $I\subseteq\widetilde{P}-\widetilde{P}_{\max}$,
and let $Q$ be the PO system with underlying poset $\widetilde{P}$
such that $Q^{d}=I$. Then there is an open subset $A(Q)$ of $W$,
unique up to homeomorphisms of $W$, such that 

(i) the closure algebra $\mathscr{C}$ generated by $A(Q)$ is isomorphic
to $\mathscr{E\mathit{\mathrm{(}}\mathit{P\mathit{\mathrm{)}}}}$; 

(ii) the atoms of $\mathscr{C}$ under this isomorphism, together
with $A(Q)_{\infty}$ in the case that $P=P(\infty)$, form a complete
trim $Q$-partition of $W$, with any discrete atoms of $\mathscr{C}$
being singletons.
\end{thm}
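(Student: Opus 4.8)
The plan is to obtain $A$ as a single component of a trim partition already supplied by Corollary~\ref{Trimuniqueness}, and then to recover the generated closure algebra via the Rieger--Nishimura recursion from the proof of Theorem~\ref{Closed sets thm}. First I would record the order properties of $\widetilde{P}$: in every case $\widetilde{P}$ is a countable poset satisfying the ACC whose antichains have length at most $2$ (so it has no infinite antichains), and $\widetilde{P}_{\Delta}=\widetilde{P}$ with $\widetilde{P}$ possessing a finite foundation (namely $\{\bot\}$ when $P=P(\infty)$, and $\widetilde{P}_{\min}$ when $P$ is finite), because $\bot$, respectively each minimal element, lies below everything required. Since $I\subseteq\widetilde{P}-\widetilde{P}_{\max}$, we have $I\cap\widetilde{P}_{\min}\subseteq\widetilde{P}$.

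Now apply Corollary~\ref{Trimuniqueness} with $P$ replaced by $\widetilde{P}$, with $Q=\widetilde{P}$ (a lower subset of $\widetilde{P}_{\Delta}$ with $\widetilde{P}_{\max}$ finite), and with the given $I$: this yields an $\omega$-Stone space $W$ carrying a complete trim $I$-isolated $\widetilde{P}$-partition $\mathscr{X}=\{X_p\mid p\in\widetilde{P}\}$, with $W=\bigcup_p X_p$ contained in a compact set and $\{W,\mathscr{X}\}$ unique up to $\widetilde{P}$-homeomorphism. Thus $W$ is compact; and since $I\cap\widetilde{P}_{\max}=0$, $W$ has no isolated points (Proposition~\ref{Isolated propn}), so its underlying Boolean ring is countable, atomless and unital and $W$ is homeomorphic to the Cantor set; we identify it with the given space.

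Set $A=X_{p_0}$. As $p_0$ is maximal in $\widetilde{P}$, the set $\{p_0\}$ is an upper set, so $A$ is a non-empty open subset of $W$. Feeding $A$ into the recursion $U_0=B_0=A$, $U_n=W-\overline{B_{n-1}}$, $V_n=U_{n-1}\cup V_{n-1}$, $B_n=U_n-V_{n-1}$ of the proof of Theorem~\ref{Closed sets thm}, and using $\overline{X_p}=\bigcup_{q\leqslant p}X_q$ (valid since a complete trim partition is semi-trim, so Proposition~\ref{semi-trim closure}(i) applies with $X=W$), an induction identifies $B_k$ with $X_{p_k}$ for every index $k$ of $P$, shows $B_k=0$ for all other $k$, and gives $A_\infty=W-\bigcup_k B_k=X_\bot$ in the case $P=P(\infty)$. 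Hence $\At(\mathscr{C})=\{X_{p_k}\}$, and Theorem~\ref{Closed sets thm} now yields $\mathscr{C}\cong\mathscr{E}(P)$ with $\At(\mathscr{C})$ --- enlarged by $A_\infty=X_\bot$ when $P=P(\infty)$ --- equal to the complete trim $I$-isolated $\widetilde{P}$-partition $\mathscr{X}$. This gives (i) and (ii).

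For uniqueness, let $A'\subseteq W$ be another open set satisfying (i) and (ii). By (ii), $\mathscr{X}'=\At(\mathscr{C}')\cup\{A'_\infty\}$ (dropping $A'_\infty$ when $P$ is finite) is a complete trim $I$-isolated $\widetilde{P}$-partition of $W$, and in the Rieger--Nishimura recursion $A'=B'_0$ is the $p_0$-labelled component of $\mathscr{X}'$. Both $\mathscr{X}$ and $\mathscr{X}'$ satisfy the hypotheses of Corollary~\ref{Trimuniqueness} (equivalently Theorem~\ref{uniqueness thm}) with $Q=\widetilde{P}$ --- their closures are all compact because $W$ is --- so there is a $\widetilde{P}$-homeomorphism $\beta\colon W\to W$ with $X_p\beta=X'_p$ for every $p$, whence $A\beta=X_{p_0}\beta=X'_{p_0}=A'$. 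I expect the only point needing real care to be the bookkeeping that picks out $A=X_{p_0}$ as the correct generator and verifies that the recursion reproduces $\mathscr{X}$ exactly; but this is the same computation that underlies Theorem~\ref{Closed sets thm}, so the substantive ingredient is the uniqueness clause of Corollary~\ref{Trimuniqueness}.
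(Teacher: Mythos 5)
Your proposal is correct, and its skeleton is the paper's: apply Corollary~\ref{Trimuniqueness} with $Q=\widetilde{P}$ to get the unique compact $\omega$-Stone space with a complete trim $I$-isolated $\widetilde{P}$-partition, note it is the Cantor set via Proposition~\ref{Isolated propn}, and take $A=X_{p_{0}}$. The only divergence is in verifying (i) and (ii): the paper defines the Boolean algebra map $\{s\}\mapsto X_{s}$ from $\mathscr{E}(P)$ onto the closure algebra generated by the partition, checks it respects closure using $\overline{X_{s}}=\bigcup_{t\leqslant s}X_{t}$, and then invokes Lemma~\ref{1gen lemma} to see that this algebra is generated by $A$, hence equals $\mathscr{C}$; you instead run the Rieger--Nishimura recursion of Theorem~\ref{Closed sets thm} on $A$ and identify $B_{k}=X_{p_{k}}$ (and $A_{\infty}=X_{\bot}$) by induction from the same closure formula, then quote that theorem. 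Both verifications rest on the partition being good, and your induction does go through in all three cases ($P(m,0)$, $P(m,2)$, $P(\infty)$), so the routes are of comparable length; yours has the small advantage of reusing Theorem~\ref{Closed sets thm} wholesale, the paper's of exhibiting the isomorphism explicitly. Your final paragraph also spells out the uniqueness of $A$ (another $A'$ yields a second $I$-isolated $\widetilde{P}$-partition with $A'=B_{0}'$ its $p_{0}$-labelled atom, so the $\widetilde{P}$-homeomorphism of Corollary~\ref{Trimuniqueness} carries $A$ to $A'$), which the paper leaves implicit in its appeal to that corollary.
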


\begin{proof}
Apply Corollary~\ref{Trimuniqueness} with $L=Q$ and $f(p)=1$ for
$p\in L_{\min}^{d}$, noting that $Q$ has a finite foundation, satisfies
the ACC and has a finite ceiling, to obtain a unique compact Stone
space $X$ of a countable Boolean ring $R$ with a complete trim $Q$-partition
$\{X_{p}\mid p\in\widetilde{P}\}$ of $X$ such that discrete partition
elements are singletons. As $X$ is compact and has no isolated points
(using Proposition~\ref{Isolated propn}, as $Q^{d}\cap Q_{\max}=\emptyset$),
$R$ is countable atomless with an identity, and $X$ is therefore
the Cantor set $W$.

Let $A(Q)=X_{p_{0}}$, which is open in $X$, and let $\mathscr{C}$
and $\mathscr{D}$ be the closure subalgebras of $2^{X}$ generated
by $A(Q)$ and by $\{X_{p}\mid p\in P\}$ respectively. 

Now $\mathscr{E\mathit{\mathrm{(}}\mathit{P\mathit{\mathrm{)}}}}$
viewed as a Boolean algebra is generated by the singleton subsets
of~$P$, so we can define a Boolean algebra map $\alpha:$~$\mathscr{E\mathit{\mathrm{(}}\mathit{P\mathit{\mathrm{)}}}}\rightarrow\mathscr{D}:\{s\}\mapsto X_{s}$
for each $s\in P$. If $P$ is finite, then $\alpha$ is clearly an
isomorphism of closure algebras, as $\overline{X_{s}}=\bigcup_{t\leqslant s}X_{t}$.
If $P$ is infinite, then $\overline{X_{s}}=X-\bigcup_{t\nleqslant s}X_{t}$,
with the latter union being finite, and $\{t\in P\mid t\leqslant s\}=P-\{t\in P\mid t\nleqslant s\}$,
so again $\alpha$ is an isomorphism of closure algebras.

By Lemma~\ref{1gen lemma}, $\mathscr{E\mathit{\mathrm{(}}\mathit{P\mathit{\mathrm{)}}}}$
is generated by $\{p_{0}\}$, and so $\mathscr{D}$ is generated by
$A(Q)$; hence $\mathscr{D}=\mathscr{C}$ and $\mathscr{C}$ is isomorphic
to $\mathscr{E\mathit{\mathrm{(}}\mathit{P\mathit{\mathrm{)}}}}$,
with atoms $\{X_{p}\mid p\in P\}$. Part~(ii) follows by observing
that $P=\widetilde{P}$ if $P$ is finite, and that $A(Q)_{\infty}=X_{\bot}\neq\emptyset$
if $P=P(\infty)$. 
\end{proof}
\begin{rem}
Let $P=P(m,0)$ in this Theorem with $m\geqslant3$, so that $P_{\min}=\{p_{m-1},p_{m}\}$.
By Proposition~\ref{Propn std split of trim set}(i), we can write
$X=X_{1}\dotplus X_{2}$ and $A(Q)=A(Q)_{1}\dotplus A(Q)_{2}$, where
$X_{i}$ is $p_{m-2+i}$-trim and $A(Q)_{i}=A(Q)\cap X_{i}$ ($i=1,2$).
Writing $Q_{1}=Q-\{p_{m-2},p_{m}\}$ and $Q_{2}=Q-\{p_{m-1}\}$, we
see that $X$ can be expressed as a disjoint union of two clopen subsets,
with the corresponding $A(Q)_{i}$ generating a complete trim $Q_{i}$-partition
of $X_{i}$, where the underlying poset of $Q_{i}$ is $P(m-4+i,2)$.
Likewise, a $P(1,0)$-partition splits $X$ into two clopen subsets,
while a $P(2,0)$-partition splits $X$ into a clopen subset and a
$P(0,2)$-partition. Hence the true building blocks amongst the $P(m,n)$
are $P(0,0)$ (which just partitions $X$ into itself) and $\{P(m,2)\mid m\geqslant0\}$.
\begin{rem}
It is natural to ask to what extent these results extend to closure
algebras generated by multiple open subsets of (for instance) the
Cantor set. By a result of Dobbertin~\cite{Dobbertin}, any Heyting
algebra can be embedded in the Heyting algebra of ideals of an atomless
Boolean algebra. Hence (by duality) for any finitely generated closure
algebra $\mathscr{D}$, we can find open subsets $A_{1},\ldots,A_{n}$
of the Cantor set which generate a closure algebra isomorphic to $\mathscr{D}$.
The position with respect to underlying partitions however is much
less clear$\ldots$
\end{rem}

\end{rem}

\begin{question}
If $A_{1},\ldots,A_{n}$ are open subsets of the Cantor set with $n\geqslant2$,
can we find a ``nice'' (e.g.\ a complete semi-trim) partition that
refines the atoms of the Boolean algebra $\langle C_{1},\ldots,C_{n}\rangle$?
\end{question}

\paragraph{Declarations:}
\begin{itemize}
\item \textbf{Funding}: no funding was received for conducting this study
or to assist with the preparation of this manuscript. 
\item \textbf{Conflicts of interest}: the author has no relevant financial
or non-financial interests to disclose. 
\item \textbf{Availability of data and material}: data sharing is not applicable
to this article as no datasets were generated or analysed during the
current study. 
\item \textbf{Code availability}: not applicable 
\end{itemize}


\begin{thebibliography}{10}
\bibitem{Alaev}Alaev, P.: Countably categorical and autostable Boolean
algebras with distinguished ideals. Sib.\ Adv.\ Math.\textbf{\ 18},
227--241 (2008) 

\bibitem{AppsBP}Apps, A.B.: $\aleph_{0}$-categorical finite extensions
of Boolean powers. Proc.\ London\ Math.\ Soc.\textbf{\ 3}(3),
385--410 (1983) 

\bibitem{Blok}Blok, W. J.: The free closure algebra on finitely many
generators. Indag.\ Math.\textbf{\ 80}(5), 362--379 (1977)

\bibitem{Dobbertin}Dobbertin, H.: Measurable refinement monoids and
applications to distributive semilattices, Heyting algebras, and Stone
spaces. Math.\ Z.\textbf{\ 187}(1), 13--21 (1984)

\bibitem{FlumZiegler}Flum, J., Ziegler, M.: Topological model theory
(vol. 769). Springer. (2006) 

\bibitem{Hanf}Hanf, W.: Primitive Boolean algebras. Proc. Sympos.
Pure Math \textbf{25}, 75--90 (1974)

\bibitem{Hansoul}Hansoul, G.: Primitive Boolean algebras: Hanf and
Pierce reconciled. Algebra Universalis \textbf{21}(2), 250--255 (1985)

\bibitem{MacRos}Macintyre, A., Rosenstein, J.G.: $\text{\ensuremath{\aleph}}_{0}$-categoricity
for rings without nilpotent elements and for Boolean structures. J.\ Algebra
\textbf{43}(1), 129--154 (1976) 

\bibitem{Markowsky}Markowsky, G.: Chain-complete posets and directed
sets with applications. Algebra Universalis \textbf{6}(1), 53--68
(1976)

\bibitem{McK-Tarski}McKinsey, J.C., Tarski, A.: On closed elements
in closure algebras. Ann.\ of Math.\textbf{\ 47}, 122--162 (1946)

\bibitem{Myers}Myers, D.: Lindenbaum-Tarski algebras. In: Monk, J.\,P.\ and
Bonnet, R.\ (eds.) Handbook of Boolean algebras, vol.\ 3., pp.\ 1167--1195.
North-Holland, Amsterdam (1989)

\bibitem{Palchunov}Pal\textquoteright chunov, D.E.: Countably-categorical
Boolean algebras with distinguished ideals. Stud.\ Log.\textbf{\ 46}(2),
121--135 (1987)

\bibitem{Pierce}Pierce, R.S.: Compact zero-dimensional metric spaces
of finite type. Mem.\ Amer.\ Math.\ Soc \textbf{130} (1972)

\bibitem{PierceMonk}Pierce, R.\,S.: Countable Boolean algebras.
In: Monk, J.\,P.\ and Bonnet, R.\ (eds.) Handbook of Boolean algebras,
vol.\ 3., pp.\ 775--876. North-Holland, Amsterdam (1989)

\bibitem{Schmerl}Schmerl, J.H.: On $\text{\ensuremath{\aleph}}_{0}$-categoricity
of filtered Boolean extensions. Algebra Universalis \textbf{8(}1),
159--161 (1978) 

\bibitem{Touraille}Touraille, A.: Th\'eories d'alg\`ebres de Boole
munies d'id\'eaux distingu\'es. II\@. J.\ Symbolic Logic \textbf{55}(3),
1192--1212 (1990)

\bibitem{WWBP}Waszkiewicz, J., Weglorz, B.: On $\omega$-categoricity
of powers. Bull.\ Acad.\ Polon.\ Sci.\ Ser.\ Sci.\ Math.\ Astronom.\ Phys
\textbf{17}, 195--199 (1969)
\end{thebibliography}
\end{document}